\renewcommand*{\backref}[1]{}
\renewcommand*{\backrefalt}[4]{({%
		\ifcase #1 Not cited.%
		\or On p.~#2%
		\else On pp.~#2%
		\fi%
	})}
\crefname{subsection}{Subsection}{Subsection}
\DeclareMathAlphabet{\mathbbe}{U}{bbold}{m}{n}
\def\DDelta{{\mathbbe{\Delta}}}
\newcommand{\n}{\mathscr{N}}
\newcommand{\s}{\mathscr{S}}
\newcommand{\E}{\mathscr{E}}
\newcommand{\cE}{\mathcal{E}}
\newcommand{\C}{\mathscr{C}}
\newcommand{\D}{\mathscr{D}}
\newcommand{\G}{\mathscr{G}}
\newcommand{\R}{\mathscr{R}}
\newcommand{\K}{\mathscr{K}}
\newcommand{\Q}{\mathscr{Q}}
\newcommand{\U}{\mathscr{U}}
\newcommand{\sO}{\mathscr{O}}
\newcommand{\Map}{\mathrm{Map}}
\newcommand{\Hom}{\mathrm{Hom}}
\newcommand{\Sub}{\mathrm{Sub}}
\newcommand{\Eq}{\mathrm{Eq}}
\newcommand{\comma}{,}
\newcommand{\set}{\mathscr{S}\text{et}}
\newcommand{\cat}{\mathscr{C}\text{at}}
\newcommand{\CSS}{\mathscr{C}\mathscr{S}\mathscr{S}}
\newcommand{\Univ}{\mathscr{U}\mathrm{niv}}
\newcommand{\all}{\mathrm{all}}
\newcommand{\Oall}{\sO^{(\all)}}
\newcommand{\id}{\mathrm{id}}
\newcommand{\cO}{\mathcal{O}}
\newcommand{\cM}{\mathcal{M}}
\newcommand{\sset}{\mathrm{s}\mathscr{S}\mathrm{et}}
\newcommand{\morphism}[2]{[#1,#2]}
\newcommand{\ev}{\mathrm{ev}}
\newcommand{\comp}{\mathrm{comp}}
\newcommand{\Fun}{\mathrm{Fun}}
\newcommand{\Pb}{\mathrm{Pb}}
\newcommand{\Fib}{\mathscr{F}\mathrm{ib}}
\newcommand{\Aut}{\mathrm{Aut}}
\newcommand{\Inn}{\mathrm{Inn}}
\newcommand{\Out}{\mathrm{Out}}
\newcommand{\End}{\mathrm{End}}
\newcommand{\colim}{\mathrm{colim}}
\newcommand{\Grph}{\mathscr{G}\mathrm{rph}}
\newcommand{\RFib}{\mathcal{R}\mathcal{F}\mathrm{ib}}
\newcommand{\Cart}{\mathcal{C}\mathrm{art}}
\newcommand{\Comp}{\mathcal{C}\mathrm{omp}}
\newcommand{\Seg}{\mathscr{S}\mathrm{eg}}
\newcommand{\Disc}{\mathcal{D}\mathrm{isc}}
\newcommand{\Cech}{\mathrm{co}\Disc}
\newcommand{\otherComp}{\mathscr{C}\mathrm{omp}}
\newcommand{\SegCat}{\mathscr{S}\mathrm{eg}\mathscr{C}\mathrm{at}}
\newcommand{\FinSet}{\mathscr{F}\mathrm{in}\mathscr{S}\mathrm{et}}
\newcommand{\Grp}{\mathscr{G}\mathrm{rp}}
\newcommand{\Grpd}{\mathscr{G}\mathrm{rpd}}
\newcommand{\ds}{\displaystyle}
\newcommand{\fibref}[1]{\cref{sec:target fib}(\ref{#1})} 
\newcommand{\simpset}[7]{
 \begin{tikzcd}[row sep=0.5in, column sep=0.5in]
   #1 \arrow[r, shorten >=1ex,shorten <=1ex]
   \pgfmatrixnextcell #2 
   \arrow[l, shift left=1.2, "#5"] \arrow[l, shift right=1.2, "#4"'] 
   \arrow[r, shift right, shorten >=1ex,shorten <=1ex ] \arrow[r, shift left, shorten >=1ex,shorten <=1ex] 
   \pgfmatrixnextcell #3 
   \arrow[l] \arrow[l, shift left=2, "#7"] \arrow[l, shift right=2, "#6 "'] 
   \arrow[r, shorten >=1ex,shorten <=1ex] \arrow[r, shift left=2, shorten >=1ex,shorten <=1ex] \arrow[r, shift right=2, shorten >=1ex,shorten <=1ex]
   \pgfmatrixnextcell \cdots 
   \arrow[l, shift right=1] \arrow[l, shift left=1] \arrow[l, shift right=3] \arrow[l, shift left=3] 
 \end{tikzcd}
}
\newtheorem{theone}[equation]{Theorem}
\newtheorem{lemone}[equation]{Lemma}
\newtheorem{propone}[equation]{Proposition}
\newtheorem{corone}[equation]{Corollary}
\theoremstyle{definition}
\newtheorem{defone}[equation]{Definition}
\newtheorem{exone}[equation]{Example}
\theoremstyle{remark}
\newtheorem{remone}[equation]{Remark}
\newtheorem{queone}[equation]{Question}
\numberwithin{equation}{section}
\def\@seccntformat#1{%
	\expandafter\ifx\csname c@#1\endcsname\c@section\else
	\csname the#1\endcsname\quad
	\fi}
\title{Univalence in Higher Category Theory}
\author{Nima Rasekh}
\date{March 2021}
\address{{\'E}cole Polytechnique F{\'e}d{\'e}rale de Lausanne, SV BMI UPHESS, Station 8, CH-1015 Lausanne, Switzerland}
\email{nima.rasekh@epfl.ch}
\begin{document}

\begin{abstract}
 {\it Univalence} was first defined in the setting of homotopy type theory by Voevodsky \cite{hottbook2013}, who also (along with Kapulkin and Lumsdaine) adapted it to a model categorical setting \cite{kapulkinlumsdaine2012kanunivalent}, which was subsequently generalized to locally Cartesian closed presentable $\infty$-categories by Gepner and Kock \cite{gepnerkock2017univalence}. These definitions were used to characterize various $\infty$-categories as models of type theories. We give a definition for univalent morphisms in finitely complete $\infty$-categories that generalizes the aforementioned definitions and completely focuses on the $\infty$-categorical aspects, characterizing it via representability of certain functors, which should remind the reader of concepts such as adjunctions or limits.
 
 We then prove that in a locally Cartesian closed $\infty$-category (that is not necessarily presentable) univalence of a morphism is equivalent to the completeness of a certain Segal object we construct out of the morphism, characterizing univalence via internal $\infty$-categories, which had been considered in a strict setting by Stenzel \cite{stenzel2019univalence}. We use these results to study the connection between univalence and elementary topos theory. We also study univalent morphisms in the category of groups, the $\infty$-category of $\infty$-categories, and pointed $\infty$-categories.
\end{abstract}

\maketitle
\addtocontents{toc}{\protect\setcounter{tocdepth}{1}}

\tableofcontents

 \noindent

\section{Introduction} \label{sec:introduction}

\subsection{The Rise of Univalence in $\infty$-Category Theory}
The concept of {\it univalence} was first introduced by Voevodsky in the context of type theory with the desire of developing a new foundation of mathematics that permits proof verification of advanced (and homotopical) mathematics \cite{voevodsky2014origins}. This led to a closer connection between homotopy theory and type theory, known as {\it homotopy type theory} \cite{hottbook2013}, also studied by Hofmann and Streicher \cite{hofmannstreicher1998groupoidtypes}, and Awodey and Warren \cite{awodeywarren2009identitytypes}, and related to the effort of studying the connection between type theories and weak categories \cite{gambinogarner2008identitytypewfs,lumsdaine2010weakomegacat,berggarner2011weakgroupoids}. Understanding the role of univalence in type theory is still an important subject of study \cite{ahrenskapulkinshulman2015rezkcompletion,cchm2018cubical,anst2020univalenceshort,anst2021univalencelong}. For a historical account how homotopy theory arose in type theory see \cite{altenkirch2020martin}.

It was Voevodsky himself, along with Kapulkin and Lumsdaine, who gave the first analogue of univalence in a categorical setting. They introduced a notion of univalence of {\it Kan fibrations} and, with some set theoretical assumptions, used that to prove the existence of {\it univalent universes} \cite{kapulkinlumsdaine2012kanunivalent} (also reviewed in \cite{streicher2014hott}). They then use this result to prove Kan complexes give us a {\it model} for homotopy theory and hence deduce that homotopy type theory must be {\it consistent}. One noteworthy aspect of their work is that they use the explicit model structure for Kan complexes, the {\it Kan model structure} on the category of simplicial sets. Indeed, the construction of models of type theories requires certain strict constructions as they want to relate the homotopy theory of Kan complexes to {\it contextual categories} \cite{cartmell1986contexualcategories,streicher1991semanticstypetheory}, which is one way of relating type theories to categories.

This trend of constructing models of {\it intensional type theories} inside strict categories with homotopical data has since been developed extensively, both in model categories \cite{arndtkapulkin2011modelstypetheory,lumsdaineshulman2020goodexcellent} and in fibration categories \cite{kapulkinszumilo2019completequasicat}, where often {\it categories with families} \cite{dybjer1996} and their equivalent {\it natural models} \cite{awodey2018naturalmodels} have been used in order get the desired models.
More specifically, the construction of {\it strict univalent universes} in strict models of homotopy theories, while studied by many others \cite{cisinski2014univalentuniverse,stenzel2019univalence}, has largely been due to Shulman \cite{shulman2015elegantunivalence,shulman2015homotopycanonicity,shulman2017eidiagrams} culminating in \cite{shulman2019inftytoposunivalent}, where he proves that every {\it model topos} \cite{rezk2010toposes} has strict univalent universes, which is expected to completely characterize all combinatorial model categories that permit such a structure.

On the other hand, the study of non-strict univalence in non-strict models of higher categories, so-called {\it $(\infty,1)$-categories}, has been the subject of its own study, primarily due to the work of Gepner and Kock \cite{gepnerkock2017univalence} using the theory of quasi-categories developed in \cite{lurie2009htt,lurie2017ha}. The goal of this work is to generalize their ideas and introduce univalence from an exclusively $\infty$-categorical perspective. In particular, univalence in $\infty$-categories should be thought of as a universal property and so its study should be seen as akin to studying limits, colimits, adjunctions and many other common universal properties.

\subsection{The Internal Object of Equivalences and Internal \texorpdfstring{$\infty$}{oo}-Categories}
 In order to understand univalence in $\infty$-categories, it is instructive to review the construction by Gepner and Kock \cite{gepnerkock2017univalence}. For a given presentable locally Cartesian closed $\infty$-category $\C$ and morphism $p: Y \to X$, using the {\it locally Cartesian closed} structure they construct a space valued presheaf
 $$\Eq(p): (\C_{/X \times X})^{op} \to \s$$
 that takes an object $(f,g):Z \to X \times X$ to the space of equivalences of $f^*Y$ and $g^*Y$ over $Z$, which we denote $\Eq_{/Z}(f^*Y,g^*Y)$. Using the fact that $\C$ is a {\it presentable} $\infty$-category they prove this functor is representable and denote the representing object $\underline{\Eq}_{X \times X}(Y \times X,X \times Y) \to X \times X$. They then observe that there is a natural map of mapping spaces that takes a map $f: Z \to X$ to the identity map in $\Eq_{/Z}(f^*Y,f^*Y)$ and (by the Yoneda lemma) induces a map of representing objects in $\C$
 $$\delta_X:X \to \underline{\Eq}_{X \times X}(Y \times X,X \times Y).$$
 Then they finally give the desired definition and say $p$ is a {\it univalent family} if the map $\delta_X$ is an equivalence. How can we make sense of such a definition?
 
 The underlying idea is that $p: Y \to X$ should be thought of as a universal family or classifying map. Hence a morphism $f: Z \to X$ is expected to classify a certain type of object over $Z$ that is obtained via pulling back $p$ along $f$. Univalence of this morphisms now corresponds to the fact that for two morphism $f,g: Z \to X$ the space of equivalences of $f,g$ inside $\Map(Z,X)$ is equivalent to the space of equivalences of the pulled back objects inside the $\infty$-groupoid $(\C_{/Z})^\simeq$. They then use this description of univalence to completely characterize univalent morphisms in {\it Grothendieck $n$-toposes} where $1 \leq n \leq \infty$.
 
 This approach to univalence is mostly an adoption of the type theoretic approach to the $\infty$-categorical setting, in fact is a direct generalization of the notion of univalence introduced in \cite{kapulkinlumsdaine2012kanunivalent}. Hence, while completely correct, it does not have the appearance of the kind of definitions we would usually encounter in $\infty$-category theory. However, we want to make the point that univalence does fit naturally into the world of higher category and so we will start our discussion with a definition of univalence motivated by \cite{rezk2019leeds} that can be easily described in any (finitely complete) $\infty$-category. Concretely a morphism $p: E \to B$ is univalent if the corresponding natural map of spaces 
 $$p^*(-): \Map_\C(-,B) \to (\C_{/-})^\simeq$$
 induced by pulling back long $p$ is fully faithful. This condition should be thought of as the closest possible condition that an object $B$ can satisfy that want to represent the functor $(\C_{/-})^\simeq$ without running into size issues (as explained in the beginning of \cref{sec:univ target}). Hence, univalence is simply the study of {\it representability} of an important functor, similar to limits, adjunctions, ... .
 
 Starting with this definition we can recover many results in \cite{gepnerkock2017univalence} in any finitely-complete $\infty$-category, which we show in \cref{sec:univ target}. In particular, we show that if the $\infty$-category is also presentable locally Cartesian closed, then this definition will be equivalent to the definition given by Gepner and Kock and hence this can be seen as a direct generalization to finitely complete $\infty$-categories.
 
 The main result of this paper (\cref{the:main theorem}) proves that under mild additional conditions on the $\infty$-category $\C$, the univalence condition on a morphism $p: E \to B$ can be internalized in the following sense: $p$ is univalent in $\C$ if and only if we can construct an {\it internal $\infty$-category object} out of $p$ such that the codomain $B$ is the {\it internal object of equivalences} of this internal $\infty$-category. While more convoluted than the representability condition, the study of internal categories is still a natural part of category theory. It is noteworthy that on the other side, this view on univalence has not yet been developed extensively in type theory as constructing $\infty$-categories in type theory has proven to be quite challenging \cite{finster2018hottest,capriotti2019hottest,finsterreuttervicary2020strictunital}.
 
 While in the setting of finitely complete $\infty$-categories defining internal $\infty$-categories is not too difficult, constructing internal $\infty$-categories is very challenging given all the coherence conditions involved. Hence, we either have to strictify the $\infty$-category in question in order to construct internal categories, which is the approach taken in \cite{stenzel2019univalence}, where Stenzel focuses on univalence in model categories and relative categories \cite{barwickkan2012relativecategory}. The other option is to stay in a non-strict $\infty$-categorical setting, but develop the required $\infty$-categorical machinery, which is our approach. Given the additional complication that results from the approach, we will confine the proof and the necessary theoretical set up to its own section (\cref{sec:proof}), focusing the previous two section on the main theorem and important implications (\cref{sec:univ segal obj}) and motivation from strict category theory (\cref{sec:strict cat}).
 
 Using our understanding of univalence via internal category objects, we will recover many important topos theoretic properties, generalizing results in \cite{gepnerkock2017univalence}. More precisely, we observe how the simple desire to classify all $(-1)$-truncated univalent morphisms in a locally Cartesian closed $\infty$-category naturally leads us to a {\it subobject classifier}, which in the classical setting is the defining property of an elementary topos \cite{tierney1973elementarytopos}. On the other side, the effort to classify $0$-truncated univalent morphisms even in a Grothendieck $1$-topos requires us to study its {\it enveloping Grothendieck $\infty$-topos}. The clear implication is that classifying univalent morphisms pushes us in a higher categorical and topos theoretical direction naturally encouraging us to study elementary versions of higher toposes \cite{rasekh2018elementarytopos}. This is the focus of \cref{sec:univ and topos}.

 The results of \cref{sec:univ and topos} leave us with the question whether univalence is expected to play any role in other $\infty$-categories. In \cref{sec:examples} we will look at two examples: One where univalence is expected to play an important role, but maybe in a different form,  $\cat_\infty$, and one where univalence does not seem to lead to any interesting results, {\it pointed $\infty$-categories}.
 
 \subsection{Strict vs. Non-Strict} 
 Given that we could have chosen the easier route and studied univalence in the strict setting of a model category or relative category, why go through all these complications? One of the main goals is to make univalence more connected to main stream $\infty$-category theory, which has been developed primarily using the model of {\it quasi-categories} \cite{joyal2008notes,joyal2008theory,lurie2009htt,lurie2017ha}, whereas relative categories have not received similar attention. There is also a model-independent approach to $(\infty,1)$-category theory, known as {\it $\infty$-cosmoi} \cite{riehlverity2017inftycosmos,riehlverity2018elements}, however, it only includes non-strict models such as quasi-categories and {\it complete Segal spaces} \cite{rezk2001css}, but not strict models, such as relative categories or {\it simplicial enriched categories} \cite{bergner2007bergnermodelcat}, further providing evidence that univalence should primarily be studied in non-strict model of $(\infty,1)$-categories. Besides the theoretical satisfaction of developing univalence in quasi-categories there are also concrete benefits.
 
 \begin{itemize}
 	\item {\bf Elementary $(\infty,1)$-Topos Theory:} We have already noticed that univalence is related to elementary $(\infty,1)$-topos theory. This has been observed more clearly in \cite{rasekh2018elementarytopos,stenzel2020comprehension}. As topos theory requires extensive use of $\infty$-categorical constructions studying the connection between univalence and topos theory can only be done in the context of quasi-categories, complete Segal spaces or similar $\infty$-cosmoi.
 	\item {\bf Motivic Homotopy Theory:} Motivic homotopy theory is a method for defining homotopy theory in the algebro-geometric setting of schemes, first developed by Morel and Voevodsky \cite{morelvoevodsky1999motivic}. Despite its geometric nature, the resulting $\infty$-category is in fact not a Grothendieck $\infty$-topos \cite[Remark 3.5]{spitzweckostvaer2012motivictwisted} \cite[Proposition 6.7]{gepnerkock2017univalence} and hence the topos theoretic aspects of motivic spaces have been a focus of attention \cite{raptisstrunk2018motivictopos} \cite[Section 6]{gepnerkock2017univalence}. The construction of internal categories presented here could be an important step towards better understanding the topos-theoretic properties of motivic spaces. 
 	\item {\bf $(\infty,2)$-Topos Theory:} The development of {\it Grothendieck $(\infty,1)$-toposes} (called $\infty$-topos in \cite{lurie2009htt}) was strongly guided by a decent understanding of {\it Grothendieck $1$-toposes}. In particular, we know that a locally presentable $1$-category is a Grothendieck $1$-topos if it satisfies certain conditions known as the {\it Giraud axioms} \cite{maclanemoerdijk1994topos} (also called {\it weak descent} in \cite{rezk2010toposes}). This has been lifted directly to the $\infty$-categorical setting by Lurie \cite[Section 6]{lurie2009htt} and Rezk \cite{rezk2010toposes}. Unfortunately the $2$-categorical situation is far less clear. It is not completely understood which conditions we should impose on a locally presentable $1$-category in order to get a working notion of a Grothendieck $2$-topos and it is an active field of research \cite{weber2007twotopos}. Accordingly, there is also no analogous definition for a Grothendieck $(\infty,2)$-topos. 
 	Given this state of affairs, the characterization of univalence via internal categories could be an excellent starting point for future generalizations to $(\infty,2)$-toposes via internal $(\infty,2)$-categories.
 	\item {\bf Other unexpected connections:} There could be other connections that we are right now not aware of. For example, in \cref{cor:univalent complete group} we show that there is a bijection between $1$-truncated connected univalent morphisms over the point and {\it complete groups}, which are currently an object of study in group theory \cite[Section 7]{rotman1995groups} and have not yet been classified. Hence, the study of univalence in $\infty$-categories could help with the classification of objects of interest in other branches of mathematics.
 \end{itemize}

\subsection{Required Background}
 First of all this work is focused on the {\it $\infty$-categorical aspects} of univalence and so no knowledge of (homotopy) type theory is assumed or even necessary. Moreover, a great deal of effort went into concentrating the most technical $\infty$-categorical aspects of the paper in a single section: \cref{sec:proof}. The hope is that the remainder is accessible to anybody who has (or is willing to accept) a general background in higher category theory including some knowledge of limits, colimits and adjunctions. This can be in the context of quasi-categories \cite[Chapter 1]{lurie2009htt}\cite[Sections 1-3]{rezk2017qcats}, complete Segal spaces \cite{rezk2001css}, or $\infty$-cosmoi \cite[Chapters 1,2]{riehlverity2018elements}. 
 
 We also use Cartesian (right) fibrations as a model for higher category (space)-valued presheaves and their Yoneda lemma, again studied extensively for quasi-categories \cite[Chapter 2]{lurie2009htt}, complete Segal spaces \cite[Section 3]{rasekh2017left} and $\infty$-cosmoi \cite[Chapter 5]{riehlverity2018elements}. However, the important results regarding right fibrations will also be reviewed in \cref{sec:target fib}. 
 
\subsection{Connection to other Work}
 Since this work first appeared, many of these results have been also studied by Stenzel, some from a strict angle (using relative categories and model categories) \cite{stenzel2019univalence}, and some from the perspective of the comprehension construction and quasi-categories \cite{stenzel2020comprehension}. Hence, there are references throughout the paper pointing to common results.
 
\subsection{Acknowledgments}
 I want to thank Charles Rezk for many helpful conversations that have motivated this work and the particular definition of univalence chosen here (\cref{def:univalence}).
 I also want to thank Mike Shulman for many helpful conversations on how to relate Segal objects to univalence.
 Moreover, I would like to thank Louis Martini for pointing out an error in a previous version. Finally, I want to thank Philip Hackney for pointing out a mistake in \cref{sec:strict cat} in the previous version (\cref{rem:hackney example}).

\section{From \texorpdfstring{$\infty$}{oo}-Categories to the Target Fibration} \label{sec:target fib}
 As we shall see in the coming sections, the study of univalence in $\infty$-categories is intimately tied to the target fibrations and various derivations thereof. We will hence require various aspects of the theory of fibrations of $\infty$-categories. Most sections only require some basic results that can be found in any source on Cartesian fibrations such as \cite[Section 2.4]{lurie2009htt}, \cite[Chapter 5]{riehlverity2018elements}, \cite[Section 2]{ayalafrancis2020fibrations} and which we will quickly review here. On the other side, \cref{sec:proof} requires a very technical understanding of Cartesian fibrations based on results in \cite{rasekh2017cartesian,rasekh2021cartfibcss,rasekh2021cartfibmarkedvscso}, which will be reviewed at the beginning of that section.
 
 \begin{enumerate}
 	\item \label{item:universes} We fix three universes: {\it small}, {\it large} and {\it very large}.
 	\item \label{item:cosmos} We denote the (very large) $\infty$-cosmos of (large) quasi-categories \cite{joyal2008notes,joyal2008theory} by $\Q\cat$. A {\it theory of $\infty$-categories} is an $\infty$-cosmos $\K$ such that the {\it underlying quasi-category functor} $(-)_0= \Hom_\K(1,-): \K \to \Q\cat$ is a biequivalence of $\infty$-cosmoi with inverse the tensor $1 \otimes -: \Q\cat \to \K$ (also called $\infty$-cosmos of $(\infty,1)$-categories \cite[Definition 1.3.10]{riehlverity2018elements}). According to \cite[Example 1.3.9]{riehlverity2018elements}, examples include the $\infty$-cosmos of {\it quasi-categories} itself, but also the $\infty$-cosmos of {\it complete Segal spaces} $\CSS$ \cite{rezk2001css,joyaltierney2007qcatvssegal}, {\it Segal categories} $\SegCat$ \cite{bergner2007threemodels}, and {\it $1$-complicial sets} $1-\otherComp$ \cite{lurie2009htt}. 
    We define finitely complete (or locally Cartesian closed) $\infty$-categories as defined in \cite{riehlverity2018elements} and note that by \cite[Appendix F]{riehlverity2018elements} in the particular case of quasi-categories the definition coincides with alternative definitions given in \cite{lurie2009htt,gepnerkock2017univalence}.
    \item \label{item:css} If $\K$ is a theory of $\infty$-categories, then we can compose the underlying quasi-category functor with the tensor to get a biequivalence $1 \otimes (-)_0: \CSS \to \K$ that allows us to construct an $\infty$-category out of a complete Segal space. Similarly, there is an inverse $\mathrm{nerve} (\Hom_\K(1,-)):\K \to \CSS$ \cite[Example 1.3.9]{riehlverity2018elements}, that takes an $\infty$-category in $\C$ to its {\it underlying complete Segal space}.
 	\item \label{item:inftycategories} We denote the underlying (very large) quasi-category of $\K$ by $\widehat{\cat_\infty}$ (which is not an object in the $\infty$-cosmos $\Q\cat$) and up to equivalence does not depend on the choice of $\infty$-cosmos. In particular, for two $\infty$-categories $\C,\D$ we denote the Kan complex of functors by $\Map_{\cat_\infty}(\C,\D)$. Moreover, we denote the full subcategory of (very large) $\infty$-groupoids (which we also call spaces) by $\widehat{\s}$. 
 	\item \label{item:underlyinggroupoid} The inclusion functor $\widehat{\s} \hookrightarrow \widehat{\cat_\infty}$ has a right adjoint, which takes an $\infty$-category to its underlying maximal $\infty$-groupoid, which we denote by $(-)^\simeq: \widehat{\cat_\infty} \to \widehat{\s}$.
 	\item \label{item:truncated} Recall a morphism $f:c \to d$ in a finitely complete $\infty$-category $\C$ is {\it $(-2)$-truncated} if it is an equivalence and {\it $n$-truncated} (for $n \geq -1$) if the diagonal $f: c \to c \times_d c$ is $(n-1)$-truncated. 
 	An object is $n$-truncated if the map to the final object is $n$-truncated. In particular, an object is $(-2)$-truncated if and only if it is final. We denote the full sub-$\infty$-category of $n$-truncated objects bt $\tau_n\C$.
 	\item \label{item:cart rfib} 
 	For an $\infty$-category $\C$, a Cartesian fibration over $\C$ models a presheaf from $\C$ valued in $\infty$-categories and a right fibration over $\C$  models a presheaf  from $\C$ valued in spaces. They were first introduced in the context of quasi-categories  \cite[Section 2.4]{lurie2009htt}, but have been generalized to an arbitrary $\infty$-cosmos \cite[Chapter 5]{riehlverity2018elements}, in a way that coincides with the original definition when we restrict to the $\infty$-cosmos of quasi-categories \cite[Appendix F]{riehlverity2018elements}.
 	For a fixed $\infty$-category $\C$, we denote the full sub-quasi-category of $(\widehat{\cat_\infty})_{/\C}$ consisting of Cartesian fibrations by $\Cart_{/\C}$ and the full subcategory of right fibrations  (also called discrete Cartesian fibrations in \cite[Section 5.5]{riehlverity2018elements}) by $\RFib_{/\C}$. Note $\Cart_{/\C}$ (up to categorical equivalence) is independent of the choice of $\infty$-cosmos, as biequivalences of $\infty$-cosmoi preserve and reflect (discrete) Cartesian fibrations \cite[Proposition 10.3.6(x),(xi)]{riehlverity2018elements}.
 	\item \label{item:cart composition} Cartesian (right) fibrations are closed under composition \cite[Lemma 5.2.3]{riehlverity2018elements}.
 	\item \label{item:rfib over space} If $\C$ is an $\infty$-groupoid, then we have an equivalence $\widehat{\s}_{/\C} \simeq \RFib_{/\C}$ (proven for quasi-categories in \cite[Lemma 2.1.3.3]{lurie2009htt}) and similarly $\widehat{\cat_\infty}_{/\C} \simeq \Cart_{/\C}$ (\cite[Theorem 3.1.5.1]{lurie2009htt}).
 	\item \label{item:rep right fib} For every $\infty$-category $\C$ and object $c$, there exists a {\it representable right fibration} $\pi_c:\C_{/c} \to \C$ \cite[Corollary 5.5.13]{riehlverity2018elements}. Moreover, we have the {\it Yoneda lemma}: For a right fibration $\R \to \C$ there is an equivalence of spaces $\Map_{(\cat_\infty)_{/\C}}(\C_{/c},\R) \simeq \R_c$, where $\R_c$ is the fiber of $\R$ over $c$  \cite[Theorem 5.7.1]{riehlverity2018elements}. In particular, taking $\R= \C_{/d}$, we get an equivalence $\Map_{(\cat_\infty)_{/\C}}(\C_{/c},\C_{/d}) \simeq \Map_\C(c,d)$ \cite[Corollary 5.7.16]{riehlverity2018elements}.
	\item \label{item:rep right fib final} For a given right fibration $\R\to \C$, there exists an equivalence of right fibrations $\R\simeq \C_{/c}$ if and only if the $\infty$-category $\R$ has a final object. In particular $\C_{/c}$ has a final object \cite[Corollary 3.5.10]{riehlverity2018elements}.
	\item \label{item:rep right fib limit} We also know that {\it Yoneda preserves limits}. A limit diagram of representable right fibrations is itself representable with representing object given by the limit (\cite[Proposition 4.3.1]{riehlverity2018elements}). 
	\item \label{item:over cat trunc}  We can use over-categories to give alternative characterizations of $(-1)$-truncated morphism. 
	Let $\C$ be an $\infty$-category with finite limits. Then for an object $f:c \to d$ the following are equivalent:
	\begin{enumerate}
		\item[(I)] $f: c \to d$ is $(-1)$-truncated.
		\item[(II)] The induced map (by the Yoneda lemma \fibref{item:rep right fib}) $\C_{/c} \to \C_{/d}$ is fully faithful.
		\item[(III)] The induced map $\C_{/c} \to \C_{/c}\times_{\C_{/d}} \C_{/c}$ is an equivalence (\fibref{item:rep right fib limit}).
		\item[(IV)] For every morphism $g:e \to d$ , the mapping space $\Map_{/d}(e,c)$ is empty or contractible.
		\item[(V)] The commutative square 
		\begin{center}
			\begin{tikzcd}
				c \arrow[r, "f"] \arrow[d, "\Delta"] & d \arrow[d, "\Delta"] \\
				c \times c \arrow[r, "f \times f"] & d \times d
			\end{tikzcd}
		\end{center}
		is a pullback square
	\end{enumerate}
	We call $(-1)$-truncated morphisms also {\it mono}.
	\item \label{item:grothendieck} According to \cite[Theorem 3.2.0.1]{lurie2009htt}, there is an equivalence of quasi-categories 
	\begin{center}
		\begin{tikzcd}
			\Cart_{/\C} \arrow[r, shift left=1.8, "St^+_\C","\bot"'] & \Fun(\C^{op}, \widehat{\cat_\infty}) \arrow[l, shift left=1.8, "Un^+_\C"]
		\end{tikzcd}
	\end{center}
    that restricts to an equivalence \cite[Theorem 2.2.1.2]{lurie2009htt}
    \begin{center}
    	\begin{tikzcd}
    		\RFib_{/\C} \arrow[r, shift left=1.8, "St_\C","\bot"'] & \Fun(\C^{op}, \widehat{\s}) \arrow[l, shift left=1.8, "Un_\C"]
    	\end{tikzcd}
    \end{center}
   takes gives us a correspondence between representable right fibrations $\C_{/c} \to \C$ and representable functors $\Map(-,c):\C^{op} \to \widehat{\s}$.
   \item \label{item:underlyng right fib} We can associate to every Cartesian fibration an {\it underlying right fibration}. It can constructed directly \cite[Corollary 2.4.2.5]{lurie2009htt} or via the functor right fibration that corresponds to the functor 
   $$\C^{op} \to \cat_\infty \xrightarrow{ \ (-)^\simeq \ } \widehat{\s}.$$
\end{enumerate}
 
\section{Univalence via the Target Fibration} \label{sec:univ target}
In this section we define univalent morphisms in finitely complete $\infty$-categories using the target fibration. We then study some important properties and make a first step towards internalizing the definition of univalent morphisms.

Let $\C$ be an $\infty$-category with finite limits and with final object denoted $1_\C$. Following notation in \cite[Section 6.1.1]{lurie2009htt} we denote the $\infty$-category of arrows by $\sO_\C$ and note the target functor $\sO_\C\to \C$ is a Cartesian fibration \cite[Lemma 6.1.1.1]{lurie2009htt}, which corresponds to the functor  
$$\C_{/-}: \C^{op} \to \widehat{\cat_\infty}$$
that takes an object to the over-category, with functoriality given by pullback (\fibref{item:grothendieck}). 

Following notation in \cite[Section 3.3]{gepnerkock2017univalence} we denote the underlying right fibration (\fibref{item:underlyng right fib}) of $\sO_\C$ by $\Oall_\C$ and note it corresponds to the functor $\C^{op} \to \widehat{\s}$ that takes an object $c$ to the underlying space of the over category $(\C_{/c})^\simeq$ (\fibref{item:underlyinggroupoid}). We want to study the representability of the right fibration $\Oall_\C \to \C$ (\fibref{item:rep right fib}). First of all we note that we should not expect $\Oall_\C$ to be commonly representable. Indeed, if it is, say by an object $B$, then we get an equivalence $\Map_\C(1_\C,B) \simeq (\C_{/1_\C})^\simeq \simeq \C^\simeq$, which implies that $\C$ must be a small $\infty$-category. 
 
 Given the lack of final objects, which are precisely the $(-2)$-truncated objects (\fibref{item:truncated}), in $\Oall_\C$ assuming $\C$ is not small, we focus on the next truncation level, which motivates following definition.
 
 \begin{defone} \label{def:univalence}
  A morphism is univalent if it is $(-1)$-truncated (\fibref{item:over cat trunc}) in $\Oall_\C$. 
 \end{defone}
 
 Let's start with an easy example.
  
 \begin{exone} \label{ex:univalent over point}
 	Let $F \to 1_\C$ be a map to the final object. 
 	For a given morphism $Y \to X$, $\Map_{\Oall_\C}(p: Y \to X,F \to 1_\C)$ is non-empty if and only if $ Y \simeq X \times F$ over $X$. Hence, $F \to 1_\C$ is univalent if and only if for every object $X$, $\Map_{\Oall_\C}(\pi_2: Y \times X \to X,F \to 1_\C)$ is contractible.
	This corresponds to the space of equivalences $\Eq_{/X}(F \times X, F \times X)$, which is contractible for all $X$ if and only if $F$ has no self-equivalences. Hence, $F \to 1_\C$ is univalent if and only if the space of self-equivalences of $F$ is contractible. 
 \end{exone}
 
 \begin{lemone} \label{lemma:univ poset}
 	The full subcategory of $\Oall_\C$ consisting of univalent maps is a poset.
 \end{lemone}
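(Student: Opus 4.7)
The plan is to unpack the definition of univalence and show that it directly gives the required truncation of mapping spaces in the full subcategory.

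First, I would recall that ``$p \colon E \to B$ is univalent'' means $p$, viewed as an object of $\Oall_\C$, is $(-1)$-truncated. In the setting of \fibref{item:over cat trunc}, being $(-1)$-truncated as an object is equivalent to the condition that for every object $q \colon Y \to X$ of $\Oall_\C$, the mapping space $\Map_{\Oall_\C}(q, p)$ is either empty or contractible. Concretely, this mapping space classifies commutative squares
\begin{center}
\begin{tikzcd}
Y \arrow[r] \arrow[d, "q"'] & E \arrow[d, "p"] \\
X \arrow[r] & B
\end{tikzcd}
\end{center}
and univalence asserts that the space of such squares is $(-1)$-truncated.

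Next, I would observe that the conclusion is almost immediate: given two univalent morphisms $p$ and $q$ in $\Oall_\C$, the mapping space $\Map_{\Oall_\C}(p, q)$ is $(-1)$-truncated, using univalence of $q$ applied to the object $p$. This uses only that $q$ is univalent; no condition on $p$ beyond its being an object of $\Oall_\C$ is needed.

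Since every mapping space in the full subcategory on univalent morphisms is $(-1)$-truncated, the subcategory is a $(0,1)$-category, i.e.\ a poset (any two parallel morphisms are equivalent, and by a contractibility argument any pair of mutually inverse morphisms in such an $\infty$-category exhibits an equivalence of objects). There is essentially no obstacle here; the main ``content'' is simply identifying \cref{def:univalence} with the fact that univalent objects of $\Oall_\C$ represent subterminal-valued presheaves on $\Oall_\C$. For this reason I would keep the proof to a couple of lines, citing \fibref{item:over cat trunc} for the relevant characterization of $(-1)$-truncation.
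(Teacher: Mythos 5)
Your proposal is correct and follows the same route as the paper: univalence is by definition $(-1)$-truncatedness in $\Oall_\C$, so by \fibref{item:over cat trunc} all mapping spaces into a univalent object are empty or contractible, and the full subcategory is therefore a poset. The paper's proof is exactly this two-line observation; your additional remark on why mutually inverse morphisms force an equivalence of objects is a harmless (and slightly more careful) elaboration.
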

 
 \begin{proof}
    By definitions the objects are $(-1)$-truncated and so, by \fibref{item:over cat trunc}, the mapping spaces are empty or contractible.
 \end{proof}

 We denote the full subcategory of univalent maps in $\sO_\C$ by $\Univ_\C$. Notice \cref{lemma:univ poset} was also proven in \cite[Theorem 3.9]{gepnerkock2017univalence} for presentable $\infty$-categories and so this can be seen as a generalization to finitely complete $\infty$-categories.

 \begin{propone} \label{prop:univ final}
 	Let $p:E \to B$ be a morphism in $\C$. Then the following are equivalent.
 	\begin{enumerate}
 		\item $p$ is univalent.
 		\item The unique map $\C_{/B} \to \Oall_\C$ induced by $p$ (via the Yoneda lemma \fibref{item:rep right fib}) is an inclusion.
 		\item The over-category projection $(\Oall_\C)_{/p} \to \Oall_\C$ is an inclusion.
 	\end{enumerate}
 \end{propone}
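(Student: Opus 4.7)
My plan is to prove $(2) \Leftrightarrow (3)$ first by explicitly identifying the two right fibrations in question, and then $(1) \Leftrightarrow (2)$ by extracting a fiber sequence from the definition of $\Oall_\C$.

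For $(2) \Leftrightarrow (3)$, I will observe that the over-category projection $(\Oall_\C)_{/p} \to \Oall_\C$ is a right fibration by \fibref{item:rep right fib}, and composing with the right fibration $\Oall_\C \to \C$ gives a right fibration $(\Oall_\C)_{/p} \to \C$ by \fibref{item:cart composition}. This composite has $\id_p$ as a final object, lying over $B \in \C$, so by \fibref{item:rep right fib final} it is equivalent to $\C_{/B}$ as a right fibration over $\C$. Under this equivalence, the projection to $\Oall_\C$ corresponds via the Yoneda lemma \fibref{item:rep right fib} to the element $p \in (\C_{/B})^\simeq \simeq (\Oall_\C)_B$, which is exactly the map appearing in (2). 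Since the two maps coincide, one is an inclusion if and only if the other is.

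For $(1) \Leftrightarrow (2)$, using the characterization of $(-1)$-truncated objects via mapping spaces (\fibref{item:over cat trunc}(IV), applied in $\Oall_\C$), $p$ is univalent precisely when $\Map_{\Oall_\C}(q, p)$ is empty or contractible for every $q: F \to c$ in $\Oall_\C$. Since $\Oall_\C$ is the underlying right fibration of the target fibration $\sO_\C$ (\fibref{item:underlyng right fib}), morphisms $q \to p$ in $\Oall_\C$ are pullback squares with base map $f: c \to B$, so there is a fiber sequence
$$\Map_{\Oall_\C}(q, p) \longrightarrow \Map_\C(c, B) \longrightarrow (\C_{/c})^\simeq$$
taken over the equivalence class of $q$, where the second map sends $f \mapsto f^* p$. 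By \fibref{item:grothendieck} this second map is exactly the fiber at $c$ of the map of right fibrations $\C_{/B} \to \Oall_\C$ in (2). Hence asking that every such mapping space be empty or contractible is equivalent to asking that $\Map_\C(c, B) \to (\C_{/c})^\simeq$ be an inclusion of path components for every $c$, which is precisely the assertion that the map in (2) is an inclusion.

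The main point requiring care will be verifying the fiber sequence above, which rests on the identification of morphisms in $\Oall_\C$ with pullback squares in $\C$; once this is in hand, everything else is a formal consequence of the Yoneda lemma for right fibrations and the pointwise characterization of $(-1)$-truncated maps in \fibref{item:over cat trunc}.
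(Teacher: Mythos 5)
Your proof is correct. The $(2)\Leftrightarrow(3)$ leg is exactly the paper's argument: the composite $(\Oall_\C)_{/p}\to\Oall_\C\to\C$ is a right fibration with final object $\id_p$ lying over $B$, hence is equivalent to $\C_{/B}$, and under this identification the projection is the Yoneda map of $(2)$. Where you diverge is in closing the triangle: the paper proves $(1)\Leftrightarrow(3)$ in one line, as a direct instance of the general characterization of $(-1)$-truncated objects in \fibref{item:over cat trunc} (being $(-1)$-truncated is by definition equivalent to the slice projection being fully faithful), whereas you prove $(1)\Leftrightarrow(2)$ directly by unwinding mapping spaces in $\Oall_\C$ via the fiber sequence $\Map_{\Oall_\C}(q,p)\to\Map_\C(c,B)\to(\C_{/c})^\simeq$ and matching its fibers against the fiberwise $(-1)$-truncatedness of the map of right fibrations. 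Both routes work. Yours is more explicit and has the virtue of making visible the picture from the introduction (univalence as fully faithfulness of $p^*(-)\colon\Map_\C(-,B)\to(\C_{/-})^\simeq$), but it leans on the identification of $\Map_{\Oall_\C}(q,p)$ as the homotopy fiber of $f\mapsto f^*p$ over the point $q$ --- a standard fact about mapping spaces in a right fibration, and you correctly flag it as the step needing verification, but note it is not among the tools listed in the paper's background section, whereas the paper's one-line argument uses only what is already recorded there.
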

  
 \begin{proof}
  (1) $\Leftrightarrow$ (3) 
  This is a direct implication of \fibref{item:over cat trunc}.
    
  (2) $\Leftrightarrow$ (3) 
  The map $(\Oall_\C)_{/p} \to \Oall_\C \to \C$ is a right fibration (\fibref{item:cart composition}) with final object (\fibref{item:rep right fib final}) and so, again using \fibref{item:rep right fib final}, is equivalent to $\C_{/B}$.
 \end{proof}
 
  The proposition implies that if $\C$ is a presentable locally Cartesian closed $\infty$-category, then the definition of univalence given here coincides with the definition given by Gepner and Kock \cite[Proposition 3.8]{gepnerkock2017univalence}. We also have following result proven in the context of presentable $\infty$-categories (using different methods) in \cite[Corollary 3.10]{gepnerkock2017univalence} and proven in the context of model categories in \cite[Proposition 5.4]{stenzel2019univalence}.

  \begin{propone} \label{prop:univ if mono}
  	Let $p: E \to B$ be univalent and assume we have following pullback square 
  	\begin{center}
  		\begin{tikzcd}
  			C \arrow[r] \arrow[d, "q"]  & E \arrow[d, "p"] \\
  			D \arrow[r] & B
  		\end{tikzcd}.
  	\end{center}
  	Then $q$ is univalent if and only if $D \to B$ is mono (\fibref{item:over cat trunc}).
  \end{propone}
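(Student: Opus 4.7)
The plan is to reinterpret univalence via \cref{prop:univ final}(2) --- an inclusion of $\infty$-categories being precisely a fully faithful functor --- and then compare the two classifying functors $\C_{/B}\to \Oall_\C$ and $\C_{/D}\to \Oall_\C$ coming from $p$ and $q$. The key preliminary observation is that the given pullback square identifies the classifier of $q$ with the composite
$$\C_{/D} \xrightarrow{u_*} \C_{/B} \xrightarrow{p_*} \Oall_\C,$$
where $u_*$ is postcomposition with $u:D\to B$ and $p_*$ is the classifier of $p$. Indeed, by transitivity of pullbacks, for any $g:X\to D$ the pullback of $q$ along $g$ is equivalent to the pullback of $p$ along $u\circ g$, and this equivalence is natural in $g$; the Yoneda lemma (\fibref{item:rep right fib}) then promotes this pointwise identification to an equivalence of right fibrations over $\C$.

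Given this factorization, both directions of the equivalence reduce to elementary properties of fully faithful functors combined with characterization (II) of \fibref{item:over cat trunc}, which says that $u$ is mono if and only if $u_*:\C_{/D}\to \C_{/B}$ is fully faithful. For the ``if'' direction, assume $u$ is mono, so $u_*$ is fully faithful; by univalence of $p$ the map $p_*$ is fully faithful; hence their composite, which is the classifier of $q$, is fully faithful and $q$ is univalent. For the ``only if'' direction, assume $q$ is univalent, so the composite $p_*\circ u_*$ is fully faithful; since $p_*$ itself is fully faithful, the two-out-of-three property applied pointwise on mapping spaces forces $u_*$ to be fully faithful as well, and hence $u$ is mono.

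The only step requiring care is verifying that the classifier of $q$ really is the composite $p_*\circ u_*$ inside $\RFib_{/\C}$ rather than merely object-wise; this is essentially the naturality of the Yoneda correspondence (\fibref{item:grothendieck}) together with pullback-pasting, and should not present a genuine obstacle. Once that identification is in hand, the rest of the argument is purely formal manipulation of fully faithful functors.
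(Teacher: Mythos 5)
Your argument is correct, and it takes a slightly different (though closely related) route from the paper's. The paper proves the statement by invoking \cref{prop:univ final} to get an equivalence of right fibrations $\C_{/B}\simeq (\Oall_\C)_{/p}$ and then matching the $(-1)$-truncated \emph{objects} on the two sides: monos into $B$ on the left, univalent pullbacks of $p$ on the right. You instead work at the level of \emph{morphisms} of right fibrations, factoring the classifier of $q$ as $\C_{/D}\xrightarrow{u_*}\C_{/B}\xrightarrow{p_*}\Oall_\C$ and applying the two-out-of-three/cancellation property of fully faithful functors together with \fibref{item:over cat trunc}(II). Both proofs are formal consequences of \cref{prop:univ final}, but your version has the advantage of making the ``only if'' direction completely explicit: the paper's identification of the $(-1)$-truncated objects of $(\Oall_\C)_{/p}$ with the univalent pullbacks of $p$ tacitly uses that any morphism of $\Oall_\C$ into a pullback of $p$ again lands among pullbacks of $p$, a point your cancellation argument sidesteps. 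Your key identification of the classifier of $q$ with $p_*\circ u_*$ is indeed just pullback pasting plus naturality of the Yoneda correspondence, as you say, so there is no gap. (For completeness, note the paper also records a second, internal proof as \cref{prop:univ if mono alt} via the Segal object $\n(p)$; your argument is independent of that machinery.)
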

  
  \begin{proof}
  	If $p$ is univalent then we have an equivalence $\C_{/B} \simeq \Oall_{/p}$ (\cref{prop:univ final}), which restricts to an equivalence of the $(-1)$-truncated objects. 
  	But $(-1)$-truncated objects in $\C_{/B}$ are the $(-1)$-truncated morphisms into $B$ (\fibref{item:truncated}), and the $(-1)$-truncated objects in $\Oall_{/p}$ are the univalent maps that are a pullback of $p$.
  \end{proof}
  
  \begin{remone}
  	If $\C$ is presentable locally Cartesian closed $\infty$-category then we can define bounded local classes of maps \cite[3.3]{gepnerkock2017univalence}. Intuitively, it is a subclass of the morphisms in $\C$ that are closed under base change, satisfies a certain locality condition (sheaf condition) and has some cardinality bound. Those bounded local classes form a poset under inclusion. It is proven in  	\cite[Theorem 3.9]{gepnerkock2017univalence} that in a presentable locally Cartesian closed $\infty$-category there is an equivalence between the poset of bounded local classes and the poset of univalent families. 
  	
  	In an arbitrary finitely complete $\infty$-category we cannot define bounded local classes as there is no internal notion of cardinality, but we can still define univalent morphisms. Thus in the non-presentable setting univalence substitutes the notion of bounded local classes. In other words, we can use univalence to characterize bounded local classes.
  \end{remone}
 
 Let us also make some observations about the poset $\Univ$. For a given univalent morphism $p$ denote the sub-poset of $\Univ$ less than $p$ by $\Univ_{\leq p}$.
 
 \begin{lemone} \label{lemma:univ leq p}
 	For a given univalent morphism $p$, $\Univ_{\leq p}$ has joins.
 \end{lemone}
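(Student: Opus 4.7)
The plan is to identify $\Univ_{\leq p}$ with the poset $\Sub(B)$ of subobjects of $B$ and then to produce joins on that side. An element of $\Univ_{\leq p}$ consists of a univalent morphism $q$ together with a map $q \to p$ in $\Oall_\C$, which unfolds to a pullback square exhibiting $q \simeq D \times_B E$ for some $m \colon D \to B$. Since $p$ is univalent, \cref{prop:univ if mono} characterizes univalence of $q$ as the monicity of $m$, so this data is precisely a subobject of $B$. I would check that the assignment $q \mapsto (m \colon D \hookrightarrow B)$ is an order-preserving bijection: a morphism $q_1 \to q_2$ in $\Oall_\C$ compatible with the maps to $p$ is a factorization $D_1 \to D_2$ between the base maps, which is automatically mono (since $D_1 \hookrightarrow B$ factors through $D_2 \hookrightarrow B$) and therefore corresponds to the containment $D_1 \leq D_2$ in $\Sub(B)$. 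Thus $\Univ_{\leq p} \simeq \Sub(B)$ as posets.

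Under this identification, joins in $\Univ_{\leq p}$ correspond to joins in $\Sub(B)$: given a family $\{q_i\}$ with corresponding subobjects $\{D_i \hookrightarrow B\}$, if the join $D := \bigvee_i D_i \hookrightarrow B$ exists in $\Sub(B)$, then pulling $p$ back along it yields a univalent morphism (again by \cref{prop:univ if mono}) that is readily checked to be the least upper bound of the family $\{q_i\}$ in $\Univ_{\leq p}$. So the lemma reduces to producing, for each family $\{D_i \hookrightarrow B\}$, the smallest subobject of $B$ through which every $m_i$ factors. I would attempt this by exploiting the fact that meets in $\Sub(B)$ are computed by pullback in $\C$ (and thus always exist), characterizing $\bigvee_i D_i$ as the meet of all upper bounds $\{D' \in \Sub(B) : D_i \leq D' \text{ for all } i\}$, which satisfies the required universal property whenever the ambient meet makes sense.

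The main obstacle is that in a general finitely complete $\infty$-category subobject posets are only meet-semilattices and need not admit arbitrary joins, so the construction above requires additional input. I would expect the proof to circumvent this either by restricting attention to finite (in particular binary) joins via a direct image-style construction built from the pullbacks already available, or by invoking the fact that every element of $\Univ_{\leq p}$ is assembled from the single object $p$ via pullback, so that the join can be computed internally to the slice $\sO_\C{}_{/p}$ rather than requiring a generic construction in $\Sub(B)$.
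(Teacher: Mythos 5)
Your first step --- identifying $\Univ_{\leq p}$ with the poset of subobjects of $B$ via \cref{prop:univ final} and \cref{prop:univ if mono} --- is exactly how the paper begins (it writes this as $\Univ_{\leq p}\simeq \tau_{-1}(\C_{/B})$). The gap is in what you do next: you go looking for least upper bounds in $\Sub(B)$, i.e.\ unions, correctly observe that a finitely complete $\infty$-category only guarantees a meet-semilattice structure on $\Sub(B)$, and then never actually produce the required operation --- your closing paragraph offers two speculative escape routes (a direct-image construction, or working in the slice of $\sO_\C$ over $p$) without carrying either out, and neither would succeed in this generality since unions of subobjects genuinely can fail to exist with only finite limits available. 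So as written the proof is incomplete at precisely the point where the work has to happen.

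The resolution is that the operation the paper constructs is the one you already noted comes for free: finite products in $\tau_{-1}(\C_{/B})$, i.e.\ intersections $D_1\times_B D_2$ of subobjects, which exist because $\C_{/B}$ has finite limits and $(-1)$-truncated objects are closed under limits. In the standard order convention (where your order-preserving identification $\Univ_{\leq p}\simeq\Sub(B)$ is correct) these are meets rather than joins; the paper's meet/join terminology is swapped relative to the usual one throughout this passage --- note that in \cref{lemma:univ weak meets} ``weak meets'' is taken to mean the existence of a common \emph{upper} bound $r$ with $p\leq r$ and $q\leq r$. Once you read the statement as asserting the existence of the pullback/intersection operation, your own observation that ``meets in $\Sub(B)$ are computed by pullback (and thus always exist)'' finishes the argument; your diagnosis that honest least upper bounds can fail is consistent with the counterexample $\Univ_{\Grp}$ given after \cref{lemma:univ groups}, and is not an obstacle the paper's proof needs to overcome.
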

 
 \begin{proof}
 	By \cref{prop:univ final}, we have an equivalence $\Univ_{\leq B} \simeq \tau_{-1}(\C_{/B})$, where $\tau_{-1}$ represents the full subcategory of $(-1)$-truncated objects.
 	Now, by assumption $\C_{/B}$ has finite limits and so $\tau_{-1}(\C_{/B})$ has finite products as well, as $(-1)$-truncated objects are closed under limits. 
 \end{proof}
  
  \begin{lemone} \label{lemma:univ weak meets}
   Assume that $\Univ$ has weak meets. Then $\Univ$ has joins.
  \end{lemone}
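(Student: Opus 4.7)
The plan is to build joins in $\Univ$ by promoting the local joins of \cref{lemma:univ leq p} to global ones, using the weak meet hypothesis to reconcile the local constructions for different upper bounds.

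Given a family $\{p_i\}_{i\in I}$ in $\Univ$ and any upper bound $u$, all the $p_i$'s lie in $\Univ_{\le u}$, and by \cref{lemma:univ leq p} this sub-poset admits joins, so I would form $v_u:=\bigvee_i p_i$ inside $\Univ_{\le u}$. If $u'\le u$ is a second upper bound of $\{p_i\}$, the minimality of $v_u$ in $\Univ_{\le u}$ (together with the fact that $u'$ is itself an upper bound of $\{p_i\}$ sitting inside $\Univ_{\le u}$) forces $v_u\le u'$; hence $v_u\in\Univ_{\le u'}$ and a symmetric argument yields $v_u=v_{u'}$.

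For two incomparable upper bounds $u_1,u_2$ of $\{p_i\}$, the weak meet hypothesis would produce a common lower bound $w\le u_1,u_2$, and I would argue that $w$ is itself an upper bound of $\{p_i\}$. Indeed, every $p_i$ is a common lower bound of $u_1,u_2$, and the universality packaged into the weak meet (corresponding via \cref{prop:univ final} to the existence of intersections of subobjects in $\tau_{-1}(\C_{/B_{u_1}})$, which exist as finite products in the slice) should recognise $p_i\le w$. The first paragraph then gives $v_{u_1}=v_w=v_{u_2}$, so $v:=v_u$ is well defined and can be checked to be a join of $\{p_i\}$ in all of $\Univ$ by unwinding the universal property.

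The main obstacle is precisely this last step, namely verifying that the weak meet of two upper bounds of $\{p_i\}$ still dominates every $p_i$. This is where the precise content of the weak-meet hypothesis must be used, and where the identification of down-sets $\Univ_{\le u}$ with subobject posets from \cref{prop:univ final} is most likely invoked to promote a weak common lower bound to one with the requisite universal property. If additionally the hypothesis supplies a top element of $\Univ$ (for instance through an empty weak meet), then every family is automatically bounded above and the construction yields joins of all families in $\Univ$; otherwise the argument produces joins of exactly the bounded families, which is still what one means by $\Univ$ having joins in this setting.
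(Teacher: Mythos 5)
Your proposal does not follow the paper's argument, and the step you yourself flag as ``the main obstacle'' is a genuine gap rather than a checkable detail. In the paper, the weak-meet hypothesis is applied to the two elements $p,q\in\Univ$ \emph{themselves}: it produces an object $r$ with $p\le r$ and $q\le r$, so that $p$ and $q$ both lie in $\Univ_{\le r}\simeq\tau_{-1}(\C_{/B_r})$ (with $B_r$ the codomain of $r$), where \cref{lemma:univ leq p} supplies the required binary operation as a finite product of $(-1)$-truncated objects in the slice. Since any element of $\Univ$ lying below both $p$ and $q$ automatically lies below $r$, hence already belongs to $\Univ_{\le r}$, the universal property established inside $\Univ_{\le r}$ is automatically a universal property in all of $\Univ$, and the proof is a one-liner: no comparison between different choices of $r$ is ever needed. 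Note that the operation \cref{lemma:univ leq p} actually produces is an intersection of subobjects of $B_r$, i.e.\ a greatest lower bound in that ordering, which is precisely why the quantifier ``for every element below both $p$ and $q$'' never leaves $\Univ_{\le r}$.

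Your construction instead applies the hypothesis to \emph{upper bounds} $u_1,u_2$ of the family $\{p_i\}$ and then needs their common lower bound $w$ to still dominate every $p_i$. This is exactly where the argument breaks: a \emph{weak} meet is by definition merely some common lower bound with no universal property, so nothing forces $p_i\le w$ (if $w$ were a genuine greatest lower bound of $u_1,u_2$ the step would go through, but then the hypothesis would not be weak). The identification $\Univ_{\le u_1}\simeq\tau_{-1}(\C_{/B_{u_1}})$ cannot repair this, since it only yields intersections of subobjects of $B_{u_1}$, not a universal property for an arbitrarily chosen $w$. Moreover, your construction cannot even begin unless the family is already bounded above, which your reading of the hypothesis does not provide; in the paper's usage the weak meet of $p$ and $q$ \emph{is} the common upper bound, and that is what makes the two-element case, and hence the statement, go through.
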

 
 \begin{proof}
  Let $p,q$ be two objects in $\Univ$. By assumption there exists an object $r$ such that $p \leq r, q \leq r$. Hence $p, q $ are objects in $\Univ_{\leq r}$. The result now follows from \cref{lemma:univ leq p}.
 \end{proof}	

 Notice the some assumption along the lines given in \cref{lemma:univ leq p} or \cref{lemma:univ weak meets} are in fact necessary. In fact we will give a counter example for the general case in \cref{lemma:univ groups}.
 
 We move on to preservation of univalence via functors. 

 \begin{propone} \label{prop:over cat univalent}
 	Let $F:\C\to \D$ be a pullback preserving fully faithful functor. Then a morphism $f$ in $\C$ is univalent if $f$ is univalent in $\D$.
 \end{propone}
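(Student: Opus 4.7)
The plan is to use \cref{def:univalence}: $f:E \to B$ is univalent in $\C$ exactly when it is $(-1)$-truncated as an object of $\Oall_\C$, meaning $\Map_{\Oall_\C}(g, f)$ is empty or contractible for every $g \in \Oall_\C$. The strategy is to lift $F$ to a fully faithful functor $F_*:\Oall_\C \to \Oall_\D$ satisfying $F_*(f) \simeq F(f)$, and then transport the $(-1)$-truncation from $F(f)$ in $\Oall_\D$ back to $f$ in $\Oall_\C$.

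To build $F_*$, recall that $\Oall_\C$ is the underlying right fibration of $\sO_\C \to \C$ (\fibref{item:underlyng right fib}), so its objects are arrows in $\C$ and a morphism $g \to g'$ corresponds to a morphism $h:D \to D'$ of targets together with an equivalence $g \simeq h^*g'$ in $\C_{/D}$---in other words, a pullback square in $\C$. Since $F$ preserves pullbacks, it sends such squares to pullback squares in $\D$, producing $F_*: \Oall_\C \to \Oall_\D$ lying over $F$.

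For full faithfulness of $F_*$, the description above exhibits $\Map_{\Oall_\C}(g,g')$ as a space fibered over $\Map_\C(D,D')$ with fiber $\Eq_{\C_{/D}}(g, h^*g')$ over $h$, and analogously for $\D$. Full faithfulness of $F$ on $\C$ yields $\Map_\C(D,D') \simeq \Map_\D(FD,FD')$ and also implies full faithfulness of the slice functor $\C_{/D} \to \D_{/FD}$, since slice mapping spaces are fibers of the $F$-preserved maps $\Map_\C(C_1, C_2) \to \Map_\C(C_1, D)$. Combined with pullback preservation $F(h^*g') \simeq F(h)^*F(g')$ and the fact that any fully faithful functor preserves and reflects equivalences, this gives $\Eq_{\C_{/D}}(g, h^*g') \simeq \Eq_{\D_{/FD}}(Fg, F(h)^*Fg')$. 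Assembling the equivalences on base and fiber yields $\Map_{\Oall_\C}(g,g') \simeq \Map_{\Oall_\D}(F_*g, F_*g')$, so $F_*$ is fully faithful.

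Now assuming $F(f)$ is univalent in $\D$, for every $g \in \Oall_\C$ we have $\Map_{\Oall_\C}(g,f) \simeq \Map_{\Oall_\D}(F_*g, F(f))$, and the latter is empty or contractible by hypothesis. Hence $f$ is $(-1)$-truncated in $\Oall_\C$, i.e.\ univalent. The main obstacle is the verification of full faithfulness of $F_*$: one must carefully unpack mapping spaces in $\Oall_\C$ via its right-fibration structure over $\C$ and check that $F$ is simultaneously compatible with the base mapping spaces, the slice equivalence spaces, and the pullback operation; everything else is a formal consequence.
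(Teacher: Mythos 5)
Your proposal is correct and follows essentially the same route as the paper: the paper's proof simply asserts that $F$ induces a fully faithful functor $\Oall_\C \to \Oall_\D$ (using pullback preservation to get the restriction to the underlying right fibrations) and concludes by reflecting $(-1)$-truncatedness along a fully faithful functor. Your write-up just makes explicit the fiberwise verification of full faithfulness that the paper leaves implicit.
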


 \begin{proof}
 	The functor $F$ induced a fully faithful functor $\sO_F: \sO_\C \to \sO_\D$, which restricts to a fully faithful functor $\Oall_F: \Oall_\C \to \Oall_\D$, as $F$ preserves pullbacks. 
 \end{proof}

 In certain circumstances we can improve this result.
 
 \begin{propone}\label{prop:yoneda reflects univalence}
 	The Yoneda embedding reflects univalence.
 \end{propone}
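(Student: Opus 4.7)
The plan is to deduce the statement directly from \cref{prop:over cat univalent} applied to the Yoneda embedding
$$y : \C \longrightarrow \P(\C) := \Fun(\C^{op}, \widehat{\s}).$$
What must be verified is that $y$ satisfies the hypotheses of that proposition: $y$ is fully faithful, $y$ preserves pullbacks, and the target $\P(\C)$ is finitely complete.

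Full faithfulness is the $\infty$-categorical Yoneda lemma, already recorded in \fibref{item:rep right fib}: it identifies $\Map_{\P(\C)}(y(c), y(d))$ with $\Map_\C(c,d)$. Preservation of finite limits is standard: limits in $\P(\C)$ are computed objectwise, and for each $x \in \C$ the functor $\Map_\C(x, -)$ preserves any limit that exists in $\C$. Finally, $\P(\C)$ admits all small limits, in particular finite ones. With these three items in place, \cref{prop:over cat univalent} yields that $f \in \C$ is univalent whenever $y(f) \in \P(\C)$ is, which is exactly the claim that the Yoneda embedding reflects univalence.

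I expect no serious obstacle here. The only point to monitor is a size issue: $\P(\C)$ is a very large $\infty$-category in the sense of \fibref{item:universes}, while $\C$ may only be large. However, the construction of $\Oall$ and the definition of univalence are insensitive to this enlargement; the fully faithful functor $\Oall_y : \Oall_\C \to \Oall_{\P(\C)}$ produced in the proof of \cref{prop:over cat univalent} lives comfortably in the very large universe, and detecting $(-1)$-truncation of an object proceeds purely through mapping spaces, as in \fibref{item:over cat trunc}, both of which are preserved by the enlargement.
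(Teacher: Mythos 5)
Your verification that the Yoneda embedding satisfies the hypotheses of \cref{prop:over cat univalent} (fully faithful by \fibref{item:rep right fib}, pullback-preserving because limits of presheaves are computed objectwise and representables preserve limits, with the size point correctly noted) is fine, and it is exactly how the paper disposes of one direction: its proof opens with ``One side is \cref{prop:over cat univalent}.'' But that is all your argument establishes, and it makes the proposition a mere special case of \cref{prop:over cat univalent} rather than the advertised ``improvement'' of it. The paper's proof treats the statement as a biconditional --- the Yoneda embedding \emph{preserves and} reflects univalence --- and all of the substantive new content lies in the preservation direction: if $p$ is univalent in $\C$, then $\Map_\C(-,p)$ is univalent in $\Fun(\C^{op},\widehat{\s})$. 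This does not follow formally from full faithfulness, because univalence of $\Map_\C(-,p)$ requires the mapping spaces in $\Oall_{\Fun(\C^{op},\widehat{\s})}$ out of \emph{arbitrary} natural transformations $P \to Q$, not merely representable ones, to be empty or contractible.

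The missing idea is the paper's descent argument: write an arbitrary $P \to Q$ as a colimit $\colim_I D$ of a diagram of representable arrows $\Map_\C(-,q_i)$, use descent in $\Fun(\C^{op},\widehat{\s})$ \cite[Theorem 6.1.3.9]{lurie2009htt} to identify $\Map_{\Oall_{\Fun(\C^{op},\widehat{\s})}}(\colim_I D, \Map_\C(-,p))$ with $\lim_I \Map_{\Oall_\C}(q_i,p)$, and observe that a limit of spaces each of which is empty or contractible is again empty or contractible, since $(-1)$-truncated spaces are closed under limits. If you insist on the literal reading of ``reflects,'' your proposal is not wrong, but you should then note that under that reading the proposition adds nothing to \cref{prop:over cat univalent}; as written, it omits the half of the paper's proof that carries the actual content.
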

 \begin{proof}
 	One side is \cref{prop:over cat univalent}. For the other side, assume $\Map(-,p):\Map_\C(-,E) \to \Map_\C(-,B)$ is univalent in $\Fun(\C^{op},\s)$. Let $P \to Q$ be an arbitrary natural transformation in $\Fun(\C^{op},\s)$. Then there is a colimit diagram $D: I \to \sO_\C \to \sO_{\Fun(\C^{op},\s)}$ such that $\colim_I D \simeq P \to Q$.  Now by descent \cite[Theorem 6.1.3.9]{lurie2009htt} we have an equivalence $$\Map_{\Oall_{\Fun(\C^{op},\s)}}(\colim_I D,\Map(-,p)) \simeq \lim_I(\Map_{\Oall_{\Fun(\C^{op},\s)}}(\Map(-,q_i),\Map(-,p))) \simeq \lim_I(\Map_{\Oall_\C}(q_i,p)).$$
 	This limit of these spaces is empty or contractible as each $\Map_{\Oall_\C}(q_i,p)$ are empty or contractible by assumption.
 \end{proof}
 
 Assume, $\D \hookrightarrow \C$ is a full subcategory, with localization functor $L: \C\to \D$.
 We say $L: \C \to \D$ is a {\it locally Cartesian closed localization} if for every span $Z \rightarrow X \leftarrow Y$ in $\C$ with $Y$ and $X$ in $\D$ the natural map  
 $$L(Z \times_X Y) \to LZ \times_X Y$$
 is an equivalence. We now have following result with regard to locally Cartesian closed localizations.
 
 \begin{propone} \label{prop:lccc localization reflect univalence}
 	Locally Cartesian closed localizations reflect univalence.
 \end{propone}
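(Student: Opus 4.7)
The plan is to use \cref{prop:univ final}: $p : E \to B$ is univalent in $\C$ iff for every $Y \in \C$ the natural pullback map $\Map_\C(Y, B) \to (\C_{/Y})^\simeq$, $f \mapsto f^*E$, is a monomorphism of $\infty$-groupoids. Taking $p$ in $\D$ (so that $Lp = p$ and the hypothesis that $Lp$ is univalent in $\D$ is the expected statement about $p$), I would verify this condition at every $Y \in \C$ by factoring the pullback map through $\D$.

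Since $B \in \D$, the localization adjunction $L \dashv i$ gives $\Map_\C(Y, B) \simeq \Map_\D(LY, B)$, so I would factor the pullback map as
$$\Map_\C(Y, B) \simeq \Map_\D(LY, B) \longrightarrow (\D_{/LY})^\simeq \xrightarrow{\ \eta_Y^*\ } (\C_{/Y})^\simeq,$$
where the first arrow is the pullback-of-$p$ map in $\D$ and the second is pullback along the unit $\eta_Y : Y \to LY$. The locally Cartesian closed condition applied to the span $Y \to B \leftarrow E$ (with $B, E \in \D$) identifies this composite with the pullback map in $\C$: for $g : LY \to B$ corresponding to $f : Y \to B$, one has $\eta_Y^*(g^*E) \simeq Y \times_{LY}(LY \times_B E) \simeq Y \times_B E = f^*E$. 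The first arrow is a monomorphism by hypothesis.

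The main step is then to show the second arrow $\eta_Y^*$, restricted to the essential image of the first (objects of the form $g^*E$), is also a monomorphism. This reduces to showing that for $g_1, g_2 \in \Map_\D(LY, B)$, pullback along $\eta_Y$ induces an equivalence $\Eq_{\D_{/LY}}(g_1^*E, g_2^*E) \simeq \Eq_{\C_{/Y}}(f_1^*E, f_2^*E)$. Pullback always preserves equivalences, giving one direction. For the reverse, I would apply the LCC condition to the span $Y \to LY \leftarrow g_i^*E$ (which has $LY, g_i^*E \in \D$) to obtain $L(\eta_Y^*(g_i^*E)) \simeq g_i^*E$, i.e.\ $L \circ \eta_Y^* \simeq \mathrm{id}$ on pullbacks of $p$. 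Consequently, an equivalence of pullbacks over $Y$ in $\C$ becomes under $L$ an equivalence over $LY$ in $\D$, giving an inverse to $\eta_Y^*$ on $\Eq$-spaces. The main obstacle is threading naturality through the LCC condition to verify $L \circ \eta_Y^* \simeq \mathrm{id}$ on the relevant full subcategory of pullbacks of $p$; once this is in place, the composite is a monomorphism at every $Y \in \C$, and so $p$ is univalent in $\C$.
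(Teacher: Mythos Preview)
The paper's own proof is a bare citation to Gepner--Kock \cite[Theorem 3.12]{gepnerkock2017univalence}, noting that the auxiliary \cite[Lemma 1.7]{gepnerkock2017univalence} does not use presentability. Your outline is essentially the argument behind that citation: factor the pullback map through the localization and use the locally Cartesian closed hypothesis to control what happens over $Y$ versus over $LY$.

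There is one point where your argument needs tightening. From $L\circ \eta_Y^* \simeq \mathrm{id}$ on the relevant full subcategory you conclude that $L$ is ``an inverse to $\eta_Y^*$ on $\Eq$-spaces''. But a natural retraction only shows that $\eta_Y^*$ is a split monomorphism on $\Eq$-spaces, and a split monomorphism of spaces need not be an equivalence; without the other round-trip $\eta_Y^*\circ L \simeq \mathrm{id}$ you cannot yet conclude that the composite $\Map_\C(Y,B)\to (\C_{/Y})^\simeq$ is $(-1)$-truncated. The clean fix is to recognise your computation as a counit: the functor $\eta_Y^*|_{\D}\colon \D_{/LY}\to \C_{/Y}$ is right adjoint to $L\circ(\eta_Y)_!$, and the LCC condition applied to the span $Y\to LY\leftarrow W$ for $W\in\D_{/LY}$ says precisely that the counit $L\big((\eta_Y)_!\,\eta_Y^* W\big)\to W$ is an equivalence. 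Hence $\eta_Y^*|_{\D}$ is fully faithful, which immediately gives the required equivalence $\Eq_{\D_{/LY}}(g_1^*E,g_2^*E)\simeq \Eq_{\C_{/Y}}(f_1^*E,f_2^*E)$. This is exactly the content of \cite[Lemma 1.7]{gepnerkock2017univalence}, and once it is in place the rest of your outline goes through as written.
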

 \begin{proof}
 	We can use the same proof as in \cite[Theorem 3.12]{gepnerkock2017univalence}, noticing that \cite[Lemma 1.7]{gepnerkock2017univalence} does not rely on presentability.
 \end{proof}

 Based on the definition we have presented up to here, whenever we want to confirm a morphism is univalent in the $\infty$-category $\C$, we have to study the $\infty$-category $\sO_\C$ instead. We would like to find a way to {\it internalize} the definition: find a criterion internal to $\C$ that determines whether a morphism is univalent. 
 
 Based on \cref{prop:univ final}, a morphism $p: E \to B$ is univalent if and only if the morphism of right fibration $\C_{/B} \to \Oall_\C$ is fully faithful, which is equivalent to $\C_{/B} \to \C_{/B} \times_{\Oall} \C_{/B}$ being an equivalence (\fibref{item:over cat trunc}). If the right fibration $\C_{/B} \times_{\Oall} \C_{/B} \to \C$ is representable, then, by the Yoneda lemma (\fibref{item:rep right fib}), this functor of right fibrations would correspond to a morphism in the category $\C$. 
 Hence, we want to prove that under certain conditions on $\C$, the right fibration $\C_{/B} \times_{\Oall} \C_{/B}$ is representable. 
 
 \begin{lemone} \label{lemma:q}
 		Let $\C$ be a finitely complete $\infty$-category and $p: E \to B$ be a morphism such that pulling back along $E \times B \to B \times B$ has a right adjoint. Then the right fibration $\C_{/B} \times_{\Oall} \C_{/B} \to \C$ is representable.
 \end{lemone}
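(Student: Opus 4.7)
The plan is to represent the functor via an internal ``biinvertible equivalence'' object in $\C_{/B \times B}$, built from the given right adjoint together with a symmetry argument and finite limits. By \fibref{item:grothendieck}, the right fibration $\C_{/B} \times_{\Oall_\C} \C_{/B} \to \C$ classifies the functor
$$c \mapsto \bigsqcup_{(f,g)\colon c \to B \times B} \Eq_{/c}(f^*E, g^*E),$$
so by \fibref{item:rep right fib final} it suffices to produce an object $R$ over $B \times B$ whose right fibration $\C_{/R} \to \C$ is equivalent to this one.

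Let $\Pi$ denote the given right adjoint to pullback along $E \times B \to B \times B$. Since the pullback of $B \times E \to B \times B$ along $E \times B \to B \times B$ is canonically $E \times E \to E \times B$, set
$$H := \Pi(E \times E \to E \times B) \in \C_{/B \times B}.$$
A direct unwinding of the adjunction gives a natural equivalence $\Map_{/B \times B}(c, H) \simeq \Map_{/c}(f^*E, g^*E)$ for each $(f,g)\colon c \to B \times B$, so $H$ plays the role of the internal mapping object. Because the swap $\sigma\colon B \times B \to B \times B$ is an equivalence and $B \times E \to B \times B$ is (canonically equivalent to) the pullback of $E \times B \to B \times B$ along $\sigma$, pulling back along $B \times E \to B \times B$ also admits a right adjoint. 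Analogous constructions then produce internal hom objects $H'$, $H_1$, $H_2$ in $\C_{/B \times B}$ representing $\Map_{/c}(g^*E, f^*E)$, $\Map_{/c}(f^*E, f^*E)$, $\Map_{/c}(g^*E, g^*E)$ respectively, together with the standard composition maps $H \times_{B \times B} H' \to H_1$, $H \times_{B \times B} H' \to H_2$ and identity sections $B \times B \to H_1$, $B \times B \to H_2$.

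Finally, set
$$R := (H \times_{B \times B} H' \times_{B \times B} H') \times_{H_1 \times_{B \times B} H_2} (B \times B),$$
where the left map sends $(\phi, \psi_L, \psi_R)$ to $(\psi_L \phi, \phi \psi_R)$ and the right map is $(\id, \id)$; this finite limit exists in $\C$. The main obstacle is verifying that $R$ represents the correct functor: a point of $\C_{/R}$ over $c$ unwinds to a tuple $(f, g, \phi, \psi_L, \psi_R, h_1, h_2)$ with $h_1\colon \psi_L \phi \simeq \id$ and $h_2\colon \phi \psi_R \simeq \id$, and I would conclude by invoking the standard $\infty$-categorical fact that this biinvertible data forms a proposition over $\phi$, contractible exactly when $\phi$ is an equivalence and empty otherwise, so that forgetting to $\phi$ identifies $\Map(c, R)$ with $\bigsqcup_{(f,g)} \Eq_{/c}(f^*E, g^*E)$ as required. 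The use of separate left and right inverses is essential here: quasi-invertible data (with a single two-sided inverse) yields a larger, non-propositional space and would not represent the correct functor.
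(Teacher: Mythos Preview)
Your proposal is correct and takes essentially the same approach as the paper: both construct the representing object as the internal ``biinvertible equivalence'' object, realized as a finite limit of internal hom objects over $B \times B$ (the paper writes this as the pullback \ref{eq:q} and defers the verification to \cite[Lemma 2.8]{vergura2019localization}, while you spell out the construction and the biinvertibility-is-a-proposition argument directly). Your explicit use of the swap $\sigma$ to obtain the right adjoint for $B \times E \to B \times B$ from the one for $E \times B \to B \times B$ is a point the paper leaves implicit.
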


 \begin{proof}
 	The right fibration $\C_{/B} \times_{\Oall} \C_{/B} \to \C$ factors as follows $\C_{/B} \times_{\Oall} \C_{/B} \to \C_{/B} \times_\C \C_{/B} \to \C$. Hence, it suffices to prove the right fibration $\C_{/B} \times_{\Oall} \C_{/B} \to \C_{/B} \times_\C \C_{/B}$ is representable. We know that $\C_{/B} \times_\C \C_{/B} \simeq \C_{/B\times B}$ (\fibref{item:rep right fib limit}). Now for a given object $(f,g): X \to B \times B$ in $\C_{/B\times B}$, its fiber of $\C_{/B} \times_{\Oall} \C_{/B} \to \C_{/B} \times_\C \C_{/B}$ is given by the space of equivalences $f^*E \simeq g^*E$ over $X$, which we denote by $\Eq_{/X}(f^*E,g^*E)$. This means the right fibration $\C_{/B} \times_{\Oall} \C_{/B} \to \C_{/B \times B}$ corresponds to the functor $\Eq_{/(-)}((-) \times_B E,(-) \times_B E)$ with functoriality given by pullback (\fibref{item:grothendieck}).
 	
 	We want to construct an object that represents this functor of equivalences. This can be done in various ways, one of which is due to Vergura \cite[Lemma 2.8]{vergura2019localization}, who proves that the object $\cE$ given by the following pullback diagram
 	\begin{equation} \label{eq:q}
 		\begin{tikzcd}[row sep=0.5in]
 			\cE \arrow[r] \arrow[d] & \morphism{\id_B \times p}{p \times \id_B}_{B \times B} \underset{B \times B}{\times} \morphism{p \times \id_B}{\id_B \times p}_{B \times B} \underset{B \times B}{\times} \morphism{\id_B \times p}{p \times \id_B}_{B \times B}  \arrow[d, "(c \circ\pi_{12}\comma c' \circ \pi_{23})"] \\
 			B \times B \arrow[r] & \morphism{p \times \id_B}{p \times \id_B}_{B \times B} \underset{B \times B}{\times} \morphism{\id_B \times p}{\id_B \times p}_{B \times B}
 		\end{tikzcd}
 	\end{equation}
 	has the desired property. Here $\pi_{12}$ ($\pi_{23}$) is given by projection on the first two components (second two component) 
 	and $c,c'$ are the appropriate composition maps (which are described in more detail in \cite[Lemma 2.8]{vergura2019localization}).
    Notice the lemma states that $\C$ is a presentable $\infty$-category, however, this assumption is never used in the proof.
\end{proof}    

 Combining this lemma with the explanation before it we now get following result.
  
 \begin{corone} \label{cor:q}
 	Let $\C$ be a finitely complete $\infty$-category and $p: E \to B$ be a morphism such that pulling back along $E \times B \to B \times B$ has a right adjoint. Then $p$ is univalent if and only if the map $B \to \cE$ induced via the Yoneda lemma (\fibref{item:rep right fib}) from the map of right fibrations $$\C_{/B} \to \C_{/\cE} \simeq \C_{/B} \times_{\Oall} \C_{/B}$$ is an equivalence.
 \end{corone}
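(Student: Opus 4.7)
The plan is to chain together three facts already established in the excerpt: the characterization of univalence via fully faithfulness of $\C_{/B} \to \Oall_\C$, the explicit representability result of \cref{lemma:q}, and the Yoneda lemma for right fibrations.

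First I would reformulate univalence. By \cref{prop:univ final}, $p$ is univalent if and only if $\C_{/B} \to \Oall_\C$ is fully faithful. By \fibref{item:over cat trunc}, this is equivalent to the diagonal map of right fibrations over $\C$
\[
\Delta: \C_{/B} \longrightarrow \C_{/B} \times_{\Oall_\C} \C_{/B}
\]
being an equivalence. (Note that both sides are right fibrations over $\C$ by \fibref{item:cart composition}, so an equivalence of underlying $\infty$-categories is the same as an equivalence over $\C$.)

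Next I would transport this condition along the representation provided by \cref{lemma:q}. Under our hypothesis on $p$, that lemma supplies an equivalence of right fibrations $\C_{/\cE} \simeq \C_{/B} \times_{\Oall_\C} \C_{/B}$ over $\C$, where $\cE$ is the object constructed in \eqref{eq:q}. Composing with $\Delta$ yields a map of representable right fibrations $\C_{/B} \to \C_{/\cE}$, which by the Yoneda lemma (\fibref{item:rep right fib}) is classified by a unique morphism $B \to \cE$ in $\C$ — precisely the map in the statement.

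Finally I would use that Yoneda is fully faithful on representables: the induced functor $\Map_\C(B,\cE) \simeq \Map_{(\cat_\infty)_{/\C}}(\C_{/B},\C_{/\cE})$ is an equivalence of spaces, and it sends equivalences to equivalences in both directions because an equivalence of representable right fibrations over $\C$ corresponds to an equivalence in $\C$ (apply Yoneda to both $B \to \cE$ and a candidate inverse). Hence $\Delta$ is an equivalence of right fibrations if and only if $B \to \cE$ is an equivalence in $\C$, which combined with the first step gives the claimed biconditional. There is no serious obstacle here; the only mildly delicate point is making sure that the composite $\C_{/B} \to \C_{/B} \times_{\Oall_\C} \C_{/B} \simeq \C_{/\cE}$ is indeed a morphism of right fibrations over $\C$ so that the Yoneda lemma applies, which follows from the fact that all equivalences and maps involved are constructed over $\C$.
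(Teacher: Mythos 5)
Your proof is correct and follows essentially the same route as the paper, which presents the corollary as an immediate combination of \cref{prop:univ final}, the reformulation of fully faithfulness as $\C_{/B} \to \C_{/B} \times_{\Oall} \C_{/B}$ being an equivalence (\fibref{item:over cat trunc}), the representability supplied by \cref{lemma:q}, and the Yoneda lemma (\fibref{item:rep right fib}). Your write-up merely makes explicit the Yoneda bookkeeping that the paper leaves implicit; there is nothing to correct.
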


\section{Univalence in Strict Categories: An Interlude} \label{sec:strict cat}
  The statement of \cref{lemma:q} and its implication, \cref{cor:q}, while correct, do not help us understand the notion of univalence internally. Right now the map constructed in \cref{lemma:q} has only an external meaning, in the sense that it represents the functor with value $\Eq_{/X}(f^*E,g^*E)$. 
  We would like to have a characterization that is internal to $\C$ similar to the characterization of univalence in $\Oall_\C$ (\cref{def:univalence}). 
  
 The lemma is already pointing in a certain direction: The functor $\Eq_{/X}(f^*E,g^*E)$ is naturally a sub-functor of the functor with value $\Map_{/X}(f^*E,g^*E)$. This suggests the idea that we are looking at an internal category, where the objects are given by maps $f: X \to B$ and morphism from $f \to g$ is given by maps $f^*E \to g^*E$ over $X$. We want to make this intuition into a precise definition, however this requires us to understand internal $\infty$-category objects. 

 Before we move on to the general case, we want to focus on case of $1$-categories where we benefit from additional strictness.
 Hence, let us assume $\C$ is a locally Cartesian closed $1$-category and recall a {\it category object} in $\C$ is the data of two objects $\cO$ (classifying objects) and $\cM$ (classifying morphisms), a factorization of the diagonal 
 $$\cO \xrightarrow{ \ \id \ } \cM \xrightarrow{ (s,t) } \cO \times \cO$$ 
 (identity, source and target), and $\mu: \cM \times_{\cO} \cM \to \cM$ (composition) such that certain diagrams (corresponding to the unitality and associativity of categories) commute \cite{eckmannhilton1961internalcat,ehresmann1963internalcat,roberts2012internalcat}.
 
 We want to construct a category object out of $p: E\to B$, motivated by the conditions outlined in the beginning of this section. 
 The description there already suggest that the objects should correspond to $B$. We want to construct the object of morphisms $\cM \to B \times B$, which should have the universal property that for two morphism $f,g: X \to B$, we have an isomorphism $(f,g)^*\cM \simeq \morphism{f^*E}{g^*E}_X$. We can use this universal property to find $\cM$. If $f=\pi_1: B \times B \to B$ and $g=\pi_2: B \times B \to B$, then $(\pi_1,\pi_2)= \id: B \times B \to B \times B$. Hence
 \begin{equation}\label{eq:m}
 	\cM = \morphism{(\pi_1)^*E}{(\pi_2)^*E}_{B \times B}.
 \end{equation} 
 We now want to observe that ($B$, $\cM$) is indeed a category object in $\C$. By construction there is a map $(s,t): \cM \to B \times B$. The map $B \to \cM$ is given by the identity map $\id_E$ in $\morphism{E}{E}_B$. We also want to construct a composition map $\mu: \cM \times_B \cM \to \cM$. We compute
 \begin{center}
 	\begin{tikzcd}
 		((E \times B) \times_{B\times B} \cM) \times_B \cM \arrow[r] \arrow[d] & (E \times B) \times_B (B \times B) \arrow[r] \arrow[d] & E \times B \arrow[r] \arrow[d] & E \arrow[d] \\
 		\cM \times_B \cM \arrow[r, "(s\comma t \comma s \comma t)"] \arrow[rrr, "s\pi_1"', bend right=10] & (B \times B) \times_B (B \times B) \arrow[r, "\pi_1"] & B \times B \arrow[r, "\pi_1"] & B
 	\end{tikzcd}.
 \end{center} 
 meaning $(s\pi_1)^*(\cM \times_B\cM) \simeq ((E \times B) \times_{B\times B} \cM) \times_B \cM$ and similarly we have 
 $(t\pi_2)^*(\cM \times_B\cM) \simeq \cM \times_B (\cM\times_{B \times B} (B \times E))$ and so we need to construct a morphism from the former to the latter. 
 
 Let $\ev: \cM \times_{B \times B} (E \times B) \to B \times E$ be the evaluation map over $B \times B$ (the counit map of the adjunction). Then the desired map 
 $$\comp: (s\pi_1)^*(\cM \times_B\cM) \to (t\pi_2)^*(\cM \times_B\cM)$$ 
 over $B \times B$ is given by the following compositions: 
 \begin{align*}
 ((E \times B) \underset{B\times B}{\times} \cM) \underset{B}{\times} \cM & \xrightarrow{ \ (\pi_2,\ev)\underset{B}{\times} \cM \ } \cM \underset{B\times B}{\times} (B \times E) \underset{B}{\times} \cM & \text{first evaluation} \\
 & \xrightarrow[ \hspace{0.7in} ]{ \cong } \cM \underset{B\times B}{\times} (B \times B) \underset{B}{\times} ((E \times B) \underset{B\times B}{\times} \cM) & \text{as } E \times B \cong E \underset{B}{\times} (B \times B)\\
 & \xrightarrow[ \hspace{0.7in} ]{ \cong } \cM \underset{B}{\times} (E \times B) \underset{B\times B}{\times} \cM & \text{as } \cM \underset{B\times B}{\times} B \times B \cong \cM\\
 & \xrightarrow{ \ \cM \underset{B}{\times} (\pi_2,\ev) \ }  \cM \underset{B}{\times} \cM \underset{B\times B}{\times} B \times E & \text{ second evaluation}
 \end{align*}	 
  Finally, we need to confirm that $\comp$ is unital and associative. However, as it is just defined by applying $(\pi_2,\ev)$ (with some isomorphisms in between) these properties follow straightforwardly from the fact that the evaluation map $\ev$ satisfies them. 
  Hence, we have constructed a category object $\C(p)=(B,\cM,s,t,\comp, \id)$ internal to the $1$-category $\C$. 
 
  How can we characterize the (internal) isomorphisms in the internal category object $(B,\cM)$? For two objects $f,g: X \to B$, a morphism $\alpha: X \to \cM$ from $f$ to $g$ is an isomorphism if there exists morphisms $\beta, \gamma: X \to \cM$ from $g$ to $f$ such that $\comp(\beta,\alpha) = \id(f)$ and $\comp(\alpha,\gamma) = \id(g)$. We can use this explanation to explicitly describe the object of isomorphisms as the pullback of the following span
  \begin{equation} \label{eq:span}
  		\begin{tikzcd}
  		 \cM \times_{B\times B} \cM \times_{B\times B} \cM \arrow[r, "(\comp \circ \pi_{12} \comma  \comp\circ \pi_{23} )"] &[0.9in] \cM\times_{B \times B} \cM & B \times B \arrow[l, "\id \times \id"'] 
  	\end{tikzcd}.
  \end{equation}  
  Comparing this span with the diagram in \ref{eq:q} and the definition of $\cM=\morphism{E \times B}{B \times E}_{B \times B}$, we realize that univalence corresponds to $B$ being the pullback of this span. Internally, this translates to saying that every isomorphism is the identity, hence we have following result.
 
 \begin{corone} \label{cor:univalence int cat}
 	Let $p: E \to B$ be a morphism in a locally Cartesian closed $1$-category $\C$. Then $p$ is univalent if and only if the category object $\C(p)=(B,\cM= \morphism{\pi_1^*E}{\pi_2^*E}_{B \times B},s,t,\comp, \id)$ has no non-trivial isomorphisms.
 \end{corone}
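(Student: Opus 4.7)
The plan is to derive the corollary from \cref{cor:q} by identifying the object $\cE$ constructed there with the internal object of isomorphisms of the category object $\C(p)$. Since $\C$ is locally Cartesian closed, pullback along every morphism has a right adjoint, so the hypothesis of \cref{cor:q} is automatic, and it suffices to show that the canonical map $B \to \cE$ is an isomorphism if and only if $\C(p)$ has no non-trivial isomorphisms.

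The key step is to unpack \eqref{eq:q} in terms of $\C(p)$. Since $\pi_1^* E = E \times B$ has structure map $p \times \id_B$ and $\pi_2^* E = B \times E$ has structure map $\id_B \times p$, the object of morphisms $\cM$ coincides with $\morphism{p \times \id_B}{\id_B \times p}_{B \times B}$, while $\morphism{\id_B \times p}{p \times \id_B}_{B \times B}$ is the ``reverse'' internal hom. A generalized element of the triple product in \eqref{eq:q} over $(f,g): X \to B \times B$ is then a triple $(\beta, \alpha, \gamma)$ with $\alpha: f^* E \to g^* E$ and $\beta, \gamma: g^* E \to f^* E$, and the pullback condition along the identity map $B \times B \to \morphism{p \times \id_B}{p \times \id_B}_{B \times B} \times_{B \times B} \morphism{\id_B \times p}{\id_B \times p}_{B \times B}$ forces $\comp(\beta, \alpha) = \id(f)$ and $\comp(\alpha, \gamma) = \id(g)$. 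This is precisely the data classified by the pullback of \eqref{eq:span}, so $\cE$ represents the functor of internal isomorphisms of $\C(p)$, and the map $B \to \cE$ of \cref{cor:q} becomes the identity-inclusion sending $f$ to the triple of identities.

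Hence $B \to \cE$ is an isomorphism exactly when every internal isomorphism in $\C(p)$ is an identity, i.e.\ when $\C(p)$ has no non-trivial isomorphisms. The main obstacle, though not deep, is the bookkeeping required to match the two alternating internal hom objects in \eqref{eq:q} (arranged so that the relevant compositions land in $\morphism{p \times \id_B}{p \times \id_B}_{B \times B}$ and $\morphism{\id_B \times p}{\id_B \times p}_{B \times B}$) with the uniform use of $\cM$ in \eqref{eq:span}; once the source/target conventions are pinned down, the verification is a straightforward diagram chase using the description of $\comp$ built from the evaluation map $\ev$.
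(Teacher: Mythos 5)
Your proposal is correct and follows essentially the same route as the paper: both derive the statement from \cref{cor:q} by matching the pullback \eqref{eq:q} defining $\cE$ with the span \eqref{eq:span} defining the internal object of isomorphisms of $\C(p)$, so that univalence ($B \to \cE$ an isomorphism) becomes the statement that every internal isomorphism is an identity. The paper leaves this comparison as a one-line observation, whereas you spell out the generalized-element bookkeeping (the triple $(\beta,\alpha,\gamma)$ with $\comp(\beta,\alpha)=\id(f)$ and $\comp(\alpha,\gamma)=\id(g)$), which is exactly the verification the paper implicitly relies on.
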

 
 \begin{remone} \label{rem:simp diagram}
 	A reader familiar with category objects might have expected the object of isomorphisms to be characterized via a pullback diagram of the following form 
 	\begin{center}
 	\begin{tikzcd}
 		\cM \times_B \cM \times_B \cM \arrow[r, "(\comp \circ \pi_{12} \comma  \comp\circ \pi_{23} )"] &[0.9in] \cM \times \cM & B \times B \arrow[l, "\id \times \id"'] 
 	\end{tikzcd}.
   \end{center} 
   These two pullbacks are in fact isomorphic as can be confirmed by direct computation.
 \end{remone}

Hence, we managed to translate univalence in terms of a category object internal to $\C$. This is exactly the kind of result we were aiming for. Notice, this result already suggests a different way to characterize univalence via comparison, which requires us to understand the ``completion procedure". 

Let $\DDelta_{\leq 1}$ be the full subcategory of $\DDelta$ with objects $[0]$ and $[1]$, and define the category of graphs as the presheaf category $\Grph=\Fun((\DDelta_{\leq 1})^{op},\set)$. There is a forgetful functor $U:\cat \to \Grph$, that takes a category to the graph consisting of objects, morphisms, source, target and identity map (but forgets the composition). We can now define a functor
\begin{equation} \label{eq:h}
	H: \cat \to \Grph
\end{equation}
that takes a category object $(\cO,\cM,s,t,\comp,\id)$ to the graph $(\cO/\sim,\cM/ \sim,s,t,\id)$, where two objects are equivalent if they are isomorphic and where we have $f \sim f'$ in $\cM$ if there exists a commutative square of the form
\begin{equation} \label{ref:equiv}
	\begin{tikzcd}
		d \arrow[r, "f"] \arrow[d, "\cong"'] & d' \arrow[d, "\cong"] \\
		e \arrow[r, "f'"] & e'
	\end{tikzcd}
\end{equation}
where the vertical morphisms are isomorphisms. We should think of $H(\D)$ as the graph of the category without non-trivial isomorphisms. In fact, for a given category $\D$, there is an evident projection map of graphs $U(\D) \to H(\D)$ that is an isomorphism if and only if the category $\D$ has no non-trivial isomorphisms. 

\begin{remone} \label{rem:hackney example}
	We would hope that the functor $H$ takes categories to categories (instead of just graphs). Unfortunately, this is not the case as composition might not be well-defined on the quotient, as was pointed out to me by Philip Hackney. Let us give an example in the category of finite sets. 
	
	To each morphism of finite sets $f: \{1,..., n\} \to \{1,...,m\}$, we can associate an equation $k_1+...+k_m=n$, where $k_i \geq 0$ is the number of elements in the fiber. Two functions are equivalent if and only if they have the same associated equation and so the equations correspond to our equivalence classes. Composition of two such equation corresponds to adding some of the summands, which is certainly not unique. Let us give one explicit example:
	\begin{itemize}
		\item Let $\alpha: \{1,2,3\} \to \{1,2\}$ be given by $1 \mapsto 1, 2\mapsto 1, 3\mapsto 2$ and $\alpha': \{1,2,3\} \to \{1,2\}$ be given by $1 \mapsto 1, 2\mapsto 2, 3\mapsto 2$ and notice both are in the class $1+2=3$.
		\item Let $\beta: \{1,2,3,4\} \to \{1,2,3\}$ be given by $1 \mapsto 1, 2 \mapsto 2, 3\mapsto 3, 4\mapsto 3$, which is in the class $1+1+2=4$.
		\item The composition $\alpha\beta: \{1,2,3,4\} \to \{1,2\}$ is given by $1 \mapsto 1, 2 \mapsto 1, 3\mapsto 2, 4\mapsto 2$, being in the class $2+2=4$ (which is the equation we get if we add the $1$s in $1+1+2=4$), whereas $\alpha'\beta: \{1,2,3,4\} \to \{1,2\}$ is given by $1 \mapsto 1, 2 \mapsto 2, 3\mapsto 2, 4\mapsto 2$, being in the class $1+3=4$ (which is the equation we get if we add the $1$ and $2$ in $1+1+2=4$), meaning they are not equivalent.
	\end{itemize}
\end{remone}

Coming back to univalence, for a given morphism $p$, we constructed a category object $\C(p) = (B,\cM)$, which gives us a functor 
$$\Hom(-,\C(p)): \C^{op} \to \cat$$
which takes an object $C$ to the category $\Hom(C,\C(p)) = (\Hom(C,B), \Hom(C,\cM))$. Using the construction $H$ and \cref{cor:univalence int cat} we now have following alternative version of univalence. 

 \begin{corone} \label{cor:univalence int cat comption}
	Let $p: E \to B$ be a morphism in a locally Cartesian closed $1$-category $\C$. Then $p$ is univalent if and only if the underlying graph of the category object $\C(p)$ represents the composition functor
	$$H \circ \Hom(-,\C(p)):\C^{op} \to \Grph,$$
	meaning there is a natural isomorphism of graphs
	$$H(\Hom(C,\C(p))) \cong U(\Hom(C,\C(p))).$$
\end{corone}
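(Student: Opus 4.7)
The plan is to reduce everything to \cref{cor:univalence int cat} via a direct Yoneda-style unpacking. First observe that the functor $U:\cat\to\Grph$ merely forgets the composition law, so there is a natural identification $U(\Hom(C,\C(p)))\cong\Hom(C,U(\C(p)))$, since both are the graph $(\Hom(C,B),\Hom(C,\cM),s_*,t_*,\id_*)$. Consequently, the claim that the underlying graph of $\C(p)$ represents $H\circ\Hom(-,\C(p))$ is the claim that the canonical projection $U(\D)\to H(\D)$ defined after \eqref{eq:h} is an isomorphism for $\D=\Hom(C,\C(p))$, naturally in $C$. Since that projection is an isomorphism precisely when $\D$ has no non-trivial isomorphisms, the corollary reduces to proving: $p$ is univalent if and only if $\Hom(C,\C(p))$ has no non-trivial isomorphisms for every object $C$ of $\C$.

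Next, I would identify the internal object $\cE$ constructed in the proof of \cref{cor:univalence int cat} (the pullback of the span \eqref{eq:span}) with the representing object for isomorphisms in the parameterized category $\Hom(C,\C(p))$. Applying $\Hom(C,-)$ to \eqref{eq:span} and using that it preserves pullbacks, one sees that $\Hom(C,\cE)$ is the set of triples $(\alpha,\beta,\gamma)$ of morphisms in $\Hom(C,\C(p))$ with $\comp(\alpha,\beta)=\id$ and $\comp(\beta,\gamma)=\id$, which is exactly the data of an isomorphism $\alpha$ in $\Hom(C,\C(p))$ together with inverses on either side. Under this identification, the canonical section $B\to\cE$ picks out the identity isomorphisms: a map $f:C\to B$ goes to the identity iso on $f$.

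Finally, the category $\Hom(C,\C(p))$ has no non-trivial isomorphisms if and only if $\Hom(C,B)\to\Hom(C,\cE)$ is a bijection. By the Yoneda lemma in the $1$-category $\C$, this holds naturally in $C$ if and only if $B\to\cE$ is itself an isomorphism in $\C$, which by \cref{cor:univalence int cat} is exactly univalence of $p$. Combined with the first paragraph, this gives the desired equivalence.

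The main work, and the only nontrivial step, is verifying the Yoneda-style identification in the second paragraph: that $\Hom(C,\cE)$ is the set of isomorphisms in $\Hom(C,\C(p))$ and that $B\to\cE$ corresponds to the identity isomorphisms. Everything else is either a direct consequence of $U$ being a forgetful functor or an invocation of \cref{cor:univalence int cat}.
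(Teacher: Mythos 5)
Your argument is correct and is essentially the paper's own (implicit) proof: the paper derives \cref{cor:univalence int cat comption} directly from the observation that the projection $U(\D)\to H(\D)$ is an isomorphism exactly when $\D$ has no non-trivial isomorphisms, combined with \cref{cor:univalence int cat}, which is precisely your reduction. You merely make explicit the Yoneda-style identification of $\Hom(C,\cE)$ with the isomorphisms of $\Hom(C,\C(p))$, which the paper leaves to the comparison of the span \eqref{eq:span} with \eqref{eq:q}.
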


Before we move on to the $\infty$-categorical case, we will present these results from a different angle. Recall there is a fully faithful functor, the {\it nerve construction}, $N: \cat \to \sset$ \cite{grothendieck1995nerve}, given by 
$$N(\C)_n = \{(f_1:x_0 \to x_1, f_2:x_1 \to x_2, ... , f_n:x_{n-1} \to x_n) \}$$
with boundary maps (face maps) given by composition (inserting identities). Nerves satisfy the {\it Segal condition} (named based on Segal's work on {\it loop spaces} \cite{segal1974loopspaces}), meaning the induced map 
\begin{equation}\label{eq:segalcond}
N(\C)_n \to N(\C)_1 \times_{N(\C)_0} ... \times_{N(\C)_0} N(\C)_1
\end{equation}
is an isomorphism of sets. In fact, this completely characterizes nerves of categories in the sense that a simplicial set is the nerve of a category if and only if it satisfies the Segal condition. Hence, there is an equivalence between {\it Segal sets} and categories. In fact, for a given Segal set $S: \DDelta^{op} \to \set$, we can construct a category 
\begin{equation} \label{eq:cat seg}
	(S_0,S_1,d_1:S_1\to S_0,d_0:S_1 \to S_0,s_0:S_0 \to S_1,d_1:S_2 \to S_1)
\end{equation}
which (in order) gives us objects, morphisms, source, target, identity and composition. The simplicial identities imply that this data in fact gives us a category.

We can generalize this analysis to finitely complete categories. For an internal category object $(B,\cM)$ in a finitely complete category $\C$, we can define a {\it Segal object} $\DDelta^{op} \to \C$, given by $[n] \mapsto \cM \times_B ... \times_B \cM$, the $n$-fold fiber product.

We can use this observation to give an alternative approach to univalence. Starting with a morphism $p:E \to B$, we constructed a category object $\C(p)$ and using the explanation in the previous paragraph, we can now construct a Segal object $\n(p)$. How can we characterize univalence directly in terms of the simplicial object $\n(p)$? By definition we have $\n(p)_3 = \cM \times_B \cM \times_B \cM$ hence we can translate the diagram in \cref{rem:simp diagram} to following span
 \begin{center}
 	\begin{tikzcd}
 	\n(p)_3 \arrow[r, "(d_1d_0\comma d_1d_3)"] &[0.6in] \n(p)_1 \times \n(p)_1 &[0.4in] \n(p)_0 \times \n(p)_0 \arrow[l, "s_0 \times s_0"'] 
 \end{tikzcd}.
 \end{center}
 This gives us another possible characterization of univalence in strict categories.

\begin{corone} \label{cor:univalence segal set}
	Let $\C$ be a locally Cartesian closed $1$-category and $p:E \to B$ be a morphism. Then $p$ is univalent if and only if the Segal object $\n(p)$ has no non-trivial automorphisms, which is equivalent to the following being a pullback square in $\C$
	\begin{center}
		\begin{tikzcd}
			\n(p)_0 \arrow[r] \arrow[d, "\Delta"] & \n(p)_3 \arrow[d, "(d_1d_0\comma d_1d_3)"] \\
			\n(p)_0 \times \n(p)_0 \arrow[r, "s_0 \times s_0"] & \n(p)_1 \times \n(p)_1
		\end{tikzcd}.
	\end{center}
\end{corone}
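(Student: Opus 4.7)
The strategy is to identify the pullback of the square in the statement with the object $\cE$ from \cref{eq:q}, and then invoke \cref{cor:univalence int cat} together with \cref{cor:q} to conclude.

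I would first unpack the right-hand vertex. Since $\n(p)_3 \cong \cM \times_B \cM \times_B \cM$ parameterizes composable triples $(f_1,f_2,f_3)$ of morphisms in the category object $\C(p)$, the map $(d_1d_0, d_1d_3)$ sends such a triple to the pair of composites $(f_3 f_2, f_2 f_1)$. Pulling back along $s_0 \times s_0$, which selects pairs of identity morphisms, then yields the object parameterizing tuples $(b_1, b_2, f_1, f_2, f_3)$ with $f_1 \colon b_2 \to b_1$, $f_2 \colon b_1 \to b_2$, $f_3 \colon b_2 \to b_1$ in $\C(p)$ such that $f_2 f_1 = \id_{b_2}$ and $f_3 f_2 = \id_{b_1}$. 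Under the identification $\cM = \morphism{\pi_1^* E}{\pi_2^* E}_{B \times B}$, this is exactly the object of triples $(\gamma, \alpha, \beta)$ witnessing that $\alpha$ is an internal isomorphism equipped with a right and a left inverse; that is, it is the object $\cE$ from \cref{eq:q}. This identification is precisely the direct computation referenced in \cref{rem:simp diagram}.

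Next I would trace the map $\n(p)_0 \to \n(p)_3$ through this identification: explicitly it is $b \mapsto (\id_{E_b}, \id_{E_b}, \id_{E_b})$ lying over the diagonal $B \to B \times B$, and it coincides with the canonical section $B \to \cE$ of \cref{cor:q} picking out the trivial iso witnesses. Thus the square in the statement is a pullback precisely when this section is an isomorphism, which by \cref{cor:q} is equivalent to the univalence of $p$. The intrinsic reformulation that $\n(p)$ has no non-trivial automorphisms says exactly the same thing from the Segal-object viewpoint: a generalized element of the pullback is an internal iso witness in $\C(p)$, so demanding that the pullback equal $\n(p)_0$ asserts that every such witness is an identity, which by \cref{cor:univalence int cat} is again univalence.

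The main obstacle is not conceptual but bookkeeping: one must carefully track the sources, targets, and orientations of $f_1, f_2, f_3$ so that $(d_1 d_0, d_1 d_3)$ really agrees, after the identifications above, with $(\comp \circ \pi_{12}, \comp \circ \pi_{23})$ of \cref{eq:span}, and in particular that the two fiber-product conventions $\cM \times_B \cM$ (composable pairs) and $\cM \times_{B \times B} \cM$ (parallel pairs) induce the same pullback in this particular context. Once this alignment is spelled out, the equivalence of all three conditions follows immediately from the previously established \cref{cor:univalence int cat} and \cref{cor:q}.
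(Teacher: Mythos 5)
Your proposal is correct and follows essentially the same route the paper takes: it chains \cref{cor:q} (univalence $\Leftrightarrow$ $B \to \cE$ is an isomorphism) through the identification of $\cE$ with the pullback of the span in \cref{rem:simp diagram}, and then rewrites that span in terms of the simplicial structure maps of $\n(p)$, with the bookkeeping you flag (matching $(d_1d_0,d_1d_3)$ with $(\comp\circ\pi_{12},\comp\circ\pi_{23})$ and reconciling the $\cM\times_B\cM$ versus $\cM\times_{B\times B}\cM$ conventions) being exactly the ``direct computation'' the paper defers to \cref{rem:simp diagram}. Your unpacking of the fiber of $(d_1d_0,d_1d_3)$ over $s_0\times s_0$ as triples witnessing a two-sided inverse is the same reading the paper gives of the object of internal isomorphisms, so the argument is complete as stated.
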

 A Segal object that satisfies this condition is called {\it complete} \cite[Section 6]{rezk2001css}. Hence we have proven that $p$ is univalent if and only if the Segal object $\n(p)$ is complete. We can think of this result as a translation of univalence from \cref{cor:univalence int cat} from internal categories to Segal objects.
 
  Our last step is to give an analogous result to \cref{cor:univalence int cat comption} for Segal objects, which again requires us to understand the completion operation. Let $S$ be a Segal set, then, by \ref{eq:cat seg}, we know that $S$ is isomorphic to the nerve of a category $S \cong N\C$, so in particular $S_0$ is isomorphic to a set of object and $S_1$ is isomorphic to a set of morphisms. Hence, we can impose an equivalence relation on $S_0$ (where two elements are equivalent if they are isomorphic in $\C$) and we can impose the equivalence relation \ref{ref:equiv} on the set $S_1$. Using the Segal condition (\ref{eq:segalcond}) we can extend this equivalence relation to $S_n$, for $n \geq 1$, hence defining a functor 
  $$H: \Seg\set \to \sset$$
  from the category of Segal sets to simplicial sets, that takes a Segal set $S$ to the simplicial set $H(S)$, with $H(S)_n=S_n/\sim$ and induced boundary and degeneracy maps. Finally, the functor $H$ comes with a projection map $S \to H(S)$, which is an isomorphism of simplicial sets if and only if $S$ has no non-trivial isomorphisms. 
  
  Notice, $H(S)$ is generally not a Segal set. Indeed, we can apply $H$ to the nerve of the category of finite sets and the fact that the quotient graph does not have a well-defined composition (\cref{rem:hackney example}) implies that the map $H(N\FinSet)_2 \to H(N\FinSet)_1 \times_{H(N\FinSet)_0} H(N\FinSet)_1$ is not a bijection of sets, meaning $H(N\FinSet)$ does not satisfy the Segal condition (\ref{eq:segalcond}).

 Finally, for a given Segal object $S_\bullet: \DDelta^{op} \to \C$ in a finitely complete category $\C$, we can define a functor 
 $$\Hom(-,S_\bullet): \C^{op} \to \sset$$
 that takes an object $c$ to the Segal {\it set} $\Hom(c,S_\bullet)$ and using this definition we have following final version of univalence in $1$-categories.
 
 \begin{corone} \label{cor:univalence segal set completion}
  Let $\C$ be locally Cartesian closed $1$-category. Then a morphism $p$ is univalent if and only if the Segal object $\n(p)$ represents the functor 
  $$H \circ \Hom(-,\n(p)): \C^{op} \to \sset,$$
  meaning there is a natural isomorphism of simplicial sets 
  $$H(\Hom(S,\n(p))) \cong \Hom(S,\n(p)).$$ 
 \end{corone}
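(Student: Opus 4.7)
The plan is to deduce this corollary from \cref{cor:univalence segal set} by a Yoneda-style argument, levelwise in the simplicial direction. By \cref{cor:univalence segal set}, $p$ is univalent if and only if the square
\begin{center}
\begin{tikzcd}
\n(p)_0 \arrow[r] \arrow[d, "\Delta"] & \n(p)_3 \arrow[d, "(d_1d_0\comma d_1d_3)"] \\
\n(p)_0 \times \n(p)_0 \arrow[r, "s_0 \times s_0"] & \n(p)_1 \times \n(p)_1
\end{tikzcd}
\end{center}
is a pullback in $\C$, and the first step is to translate this condition into a condition on the Segal sets $\Hom(C,\n(p))$ for varying $C \in \C$. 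Since $\Hom(C,-)$ preserves all limits and $\C$ is finitely complete, the above square is a pullback in $\C$ if and only if the square of sets obtained by applying $\Hom(C,-)$ is a pullback of sets for every $C$. That in turn is the condition of \cref{cor:univalence segal set} applied to the Segal set $\Hom(C,\n(p))$.

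Next, I would connect the pullback condition on the Segal set $S = \Hom(C,\n(p))$ with the projection $S \to H(S)$ being an isomorphism. By the construction of $H$ reviewed between \cref{cor:univalence segal set} and the present statement, the map $S_0 \to H(S)_0$ collapses isomorphism classes, while $S_1 \to H(S)_1$ collapses the equivalence relation generated by squares of the shape \ref{ref:equiv}; both maps are identities precisely when $S$ has no non-trivial isomorphisms. Since $S$ satisfies the Segal condition \ref{eq:segalcond}, the behavior at levels $n \geq 2$ is determined by level $1$, and hence $S \to H(S)$ is an isomorphism of simplicial sets if and only if $S$ has no non-trivial isomorphisms, which is exactly the pullback condition of \cref{cor:univalence segal set} for $S$.

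Stringing these equivalences together, $p$ is univalent if and only if for every $C$ the canonical projection $\Hom(C,\n(p)) \to H(\Hom(C,\n(p)))$ is an isomorphism of simplicial sets, and naturality in $C$ is automatic since all constructions involved are functorial (the projection $S \to H(S)$ is natural in $S$, and $\Hom(-,\n(p))$ is a functor $\C^{op} \to \Seg\set \hookrightarrow \sset$). This is precisely the desired natural isomorphism $H(\Hom(S,\n(p))) \cong \Hom(S,\n(p))$.

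The main obstacle is the middle step, namely making precise the correspondence between the explicit pullback diagram of \cref{cor:univalence segal set} inside a Segal set and the assertion that the completion map $S \to H(S)$ is an isomorphism. This is essentially a bookkeeping exercise using the Segal condition together with the fact that, by \ref{eq:cat seg}, a Segal set is literally the nerve of a $1$-category, so the equivalence relations defining $H$ coincide with the categorical notion of ``being connected by an isomorphism'' -- and \cref{rem:hackney example} shows that one must be careful not to overclaim that $H(S)$ is again a Segal set, though that complication is not needed here because we only need to characterize when $S \to H(S)$ is an isomorphism.
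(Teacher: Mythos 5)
Your proposal is correct and follows essentially the same route the paper intends: the corollary is stated as a direct combination of \cref{cor:univalence segal set}, the Yoneda-style observation that the pullback square is detected by applying $\Hom(C,-)$ for all $C$, and the stated fact that the projection $S \to H(S)$ is an isomorphism exactly when the Segal set $S$ has no non-trivial isomorphisms. Your added care about $H(S)$ not necessarily being a Segal set matches the paper's own caveat (\cref{rem:hackney example}, \cref{rem:csset}) and, as you note, is not needed for this equivalence.
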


 Given, the universal property that $H\circ \Hom(-,\n(p))$ satisfies, we can think of $H\circ \Hom(-,\n(p))$ as the {\it completion} of $\Hom(-,\n(p))$. 
 
 \begin{remone} \label{rem:csset}
 	Notice, we would have expected the completion to be a Segal set that is complete (a complete Segal set), however, as we witnessed before this is not always the case. We will see how this will be remedied in the $\infty$-categorical setting.
 \end{remone}
 
 The construction of category objects and Segal objects has helped us internalize the univalence condition in $1$-categories. We would like to generalize this from categories to $\infty$-categories. This is the goal of the coming sections. 
 
 One last note, it is interesting to observe how unnatural the univalence condition is in this setting, as it requires the category in question to have no non-trivial automorphisms (\cref{cor:univalence int cat}) and where the completion does not even give us a category (\cref{cor:univalence int cat comption}). There is an exception where the condition is more natural: $0$-categories!
 
 A category $\C$ is called a {\it $0$-category} if for any two objects there is at most one morphism between them, meaning it is just a set of objects with a relation. Internally, a category object $(\cO,\cM)$ is an {\it internal $0$-category} if the map $\cM\to \cO \times \cO$ is mono in $\C$.  
 Notice that the axioms of a category imply that the relation is {\it reflexive} (the existence of identity maps) and {\it transitive} (the existence of compositions). Meaning that the relation on $\cO$ is always a {\it preorder} \cite{schroder2003preset,schmidt2011relational}. Finally, this preorder has no non-trivial isomorphisms if and only if $x \leq y ,y \leq x$ implies $x =y$, meaning it is {\it anti-symmetric}. A reflexive, anti-symmetric, transitive preorder is simply called a {\it partially ordered set} or {\it poset}. Hence, we have just proven the following result
 
 \begin{corone} \label{cor:complete zero cat}
 	Let $p: E \to B$ be a morphism in $\C$, such that the internal category object $\C(p)$ is an internal $0$-category. Then $p$ is univalent if and only if $\C(p)$ is an internal poset. 
 \end{corone}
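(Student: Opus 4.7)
The plan is to combine Corollary \ref{cor:univalence int cat} with the characterization of internal posets as antisymmetric internal preorders outlined in the paragraph preceding the statement. The former reduces univalence of $p$ to the assertion that $\C(p)$ has no non-trivial internal isomorphisms, and the latter reduces the poset condition to antisymmetry of the already-reflexive-and-transitive relation $\cM \hookrightarrow B \times B$. Thus the problem reduces to showing that, in an internal $0$-category, antisymmetry is equivalent to the absence of non-trivial internal isomorphisms.

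First I would invoke Corollary \ref{cor:univalence int cat} to replace univalence of $p$ with the condition that the internal category $\C(p) = (B,\cM,s,t,\comp,\id)$ has no non-trivial isomorphisms. Next I would observe that since by hypothesis $(s,t)\colon\cM \to B \times B$ is mono, the identity $\id\colon B \to \cM$ and the composition $\comp\colon \cM \times_B \cM \to \cM$ equip this subobject with the structure of an internal reflexive and transitive relation, so $\C(p)$ is an internal preorder; the poset condition is precisely antisymmetry, i.e.\ that the intersection $\cM \times_{B \times B} \cM^{\mathrm{op}} \hookrightarrow B \times B$ (with $\cM^{\mathrm{op}}$ the relation obtained by swapping source and target) coincides via the diagonal with $B \hookrightarrow B \times B$.

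The key step is then to identify the internal object of isomorphisms of $\C(p)$ with the pullback $\cM \times_{B \times B} \cM^{\mathrm{op}}$. For this I would use that monic-ness of $(s,t)$ forces any internal morphism $f$ to be uniquely determined by its pair $(s(f),t(f))$; hence the hypothetical inverse of an internal isomorphism, if it exists, is uniquely determined by the swapped pair $(t(f),s(f))$, and the two inverse axioms $\comp(g,f) = \id\circ s(f)$ and $\comp(f,g) = \id \circ t(f)$ are automatic because both sides are morphisms sharing source and target in an internal $0$-category. With this identification in hand, the reflexive structure supplies a canonical factorization of the diagonal $B \to B \times B$ through $\cM \times_{B \times B} \cM^{\mathrm{op}}$, and the statement "no non-trivial isomorphisms" becomes the statement that this factorization is an isomorphism, which is exactly antisymmetry.

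The argument is essentially bookkeeping: neither step requires any technology beyond the monic-ness of $(s,t)$ and the universal property of pullbacks. The only conceptual obstacle is the identification of the internal object of isomorphisms with $\cM \times_{B \times B} \cM^{\mathrm{op}}$, and this is precisely where the $0$-category hypothesis does its work by making inverses unique and the inverse laws automatic.
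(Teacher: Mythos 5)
Your proposal is correct and follows essentially the same route as the paper, which derives the corollary informally from the preceding discussion: the internal $0$-category is automatically an internal preorder (reflexivity from $\id$, transitivity from $\comp$), and via \cref{cor:univalence int cat} univalence becomes the absence of non-trivial isomorphisms, which for a preorder is exactly antisymmetry. Your explicit identification of the object of isomorphisms with $\cM \times_{B \times B} \cM^{\mathrm{op}}$ (using monic-ness of $(s,t)$ to make inverses unique and the inverse laws automatic) is a welcome fleshing-out of a step the paper leaves implicit, but it is the same argument.
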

 
 So, in the context of $0$-categories, univalence recovers a very common object in mathematics. In fact, we can similarly analyze \cref{cor:univalence int cat comption}. Assume the $1$-category $\C$ is a preorder, which means it is a set $\C$ with a relation $\leq$. Then, the equivalence relation described in \ref{ref:equiv} becomes: For $x,y \in \C$, we have $ x \sim y$ if and only if $x \leq y$ and $y \leq x$. So, the completion $H(\C)$ (\ref{eq:h}) is the quotient poset with respect to this equivalence relation. This is known as the {\it associated poset} of a preorder \cite{facchinifinocchiaro2020poset} and so is in fact a category as well. Hence, we get following improved statement of \cref{cor:univalence int cat comption}.
 
 \begin{corone} \label{cor:complete zero cat completion}
    Let $p: E \to B$ be a morphism in a locally Cartesian closed $1$-category $\C$ such that the category object $\C(p)$ is an internal $0$-category. Then $p$ is univalent if and only if the category object $\C(p)$ represents the composition functor
    $$H \circ \Hom(-,\C(p)):\C^{op} \to \cat,$$
    meaning there is a natural isomorphism of preorders
    $$H(\Hom(C,\C(p))) \cong \Hom(C,\C(p))).$$
 \end{corone}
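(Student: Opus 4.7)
The plan is to combine \cref{cor:complete zero cat} with a Yoneda-style argument, using the special feature of the $0$-categorical situation that $H$ genuinely takes preorders to preorders (so the pathology of \cref{rem:hackney example} does not occur).

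First I would observe that, since $\C(p)$ is an internal $0$-category, the map $\cM \to B \times B$ is mono in $\C$, and hence for every object $C \in \C$ the induced map
\[\Hom(C,\cM) \longrightarrow \Hom(C,B \times B) \cong \Hom(C,B) \times \Hom(C,B)\]
is an injection of sets. In other words, the ordinary category $\Hom(C,\C(p))$ is a preorder on the set $\Hom(C,B)$ for every $C$. This is the essential simplification relative to \cref{cor:univalence int cat comption}.

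Next I would analyze the functor $H$ restricted to preorders. As discussed in the paragraph preceding the statement, if $\D$ is a preorder, then the equivalence relation from \eqref{ref:equiv} collapses to $x \sim y \Leftrightarrow x \leq y \text{ and } y \leq x$, and $H(\D)$ is simply the associated antisymmetric quotient, which is again a (genuine) category, namely the associated poset. In particular, the projection $\D \to H(\D)$ is an isomorphism if and only if $\D$ is already a poset (i.e.\ antisymmetric). Applying this pointwise, the natural isomorphism $H(\Hom(C,\C(p))) \cong \Hom(C,\C(p))$ holds for all $C \in \C$ if and only if $\Hom(C,\C(p))$ is a poset for every object $C$.

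The last step is to translate ``$\Hom(C,\C(p))$ is a poset for every $C$'' into the internal statement ``$\C(p)$ is an internal poset.'' This is a standard Yoneda argument: being an internal poset means a certain equalizer-type map associated to the pair $(s,t):\cM \to B \times B$ (expressing antisymmetry) is an isomorphism in $\C$, and by the Yoneda lemma this can be checked after applying $\Hom(C,-)$ for all $C$, where it becomes exactly the antisymmetry of the preorder $\Hom(C,\C(p))$. Combining this with \cref{cor:complete zero cat}, which already identifies univalence of $p$ with $\C(p)$ being an internal poset, yields the equivalence claimed in the corollary. The only mildly subtle point, and the one I would take some care with, is the Yoneda step, making sure that antisymmetry is indeed a limit condition detected by representable functors — but this is routine, because antisymmetry can be encoded as the requirement that the pullback of $(s,t)$ and $(t,s)$ along themselves factors through the diagonal of $B$.
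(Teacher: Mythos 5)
Your proof is correct and follows essentially the same route as the paper: the key point in both is that when $\C(p)$ is an internal $0$-category each $\Hom(C,\C(p))$ is a preorder, so $H$ applied to it is the associated poset --- an honest category --- and univalence reduces to each of these preorders being antisymmetric. The paper obtains the statement directly as a specialization of \cref{cor:univalence int cat comption}, whereas you pivot through \cref{cor:complete zero cat} and an explicit Yoneda step; this is a cosmetic difference in decomposition, and your care about antisymmetry being a limit condition detected by representables only makes explicit what the paper leaves implicit.
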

 
 Hence, at the level of internal $0$-categories univalent morphisms behave very close to our intuition, whereas for internal $1$-categories this intuition breaks down. We will see in the next section that univalence is in fact a much more natural condition in the $\infty$-categorical setting and there is no need to put any restrictions on the dimension of the internal category to get the results we expect. 
 
\section{Univalence via Segal Objects} \label{sec:univ segal obj}
 In the previous section we studied how to relate univalence in a $1$-category to internal conditions: the internal category object we constructed has no non-trivial automorphism (\cref{cor:univalence int cat}) or, equivalently, is isomorphic to its quotient (\cref{cor:univalence int cat comption}). 
 Alternatively, univalence corresponds to the Segal object we constructed being complete (\cref{cor:univalence segal set}) or, equivalently, the Segal object being isomorphic to its completion (\cref{cor:univalence segal set completion}). We would like to generalize this result from categories to $\infty$-categories. However, there are several differences that require our attention.
 
 Up until now the term {\it $\infty$-category} referred to an object in an $\infty$-cosmos equivalent to the $\infty$-cosmos of quasi-categories (\fibref{item:cosmos}). The key insight that allows to use such a general definition is that the category theory of $\infty$-cosmoi has been developed quite extensively by Riehl and Verity \cite{riehlverity2018elements}. Therefore we never needed to restrict our attention to a specific model, such as quasi-categories or complete Segal spaces (both of which are valid examples as explained in \fibref{item:cosmos}). Unfortunately, this observation does not generalize to internal $\infty$-categories. There is (as of yet) no model independent way to define internal $\infty$-categories. Hence, we do have to choose a model.

 The first thing we notice is that we cannot generalize internal category objects to internal $\infty$-categories. Concretely, it would not suffice to specify the data of objects and morphism. Rather in an $\infty$-category (internal or external) composition, and higher compositions, are itself part of the necessary data. Hence, we cannot define internal $\infty$-categories just via two objects and an appropriate choice morphisms, which means there is no hope that we can state generalizations of \cref{cor:univalence int cat}/\cref{cor:univalence int cat comption}.
 Fortunately, Segal objects do in fact generalize to finitely complete $\infty$-categories. Thus, we will henceforth focus on generalizing the results in \cref{cor:univalence segal set}/\cref{cor:univalence segal set completion} to $\infty$-categories.

 Hence, our goal is to show that given a morphism $p: E \to B$ in a nice enough $\infty$-category $\C$, we can construct a Segal object with $0$-level $B$ and $1$-level $\morphism{E \times B}{B \times E}_{B \times B}$, which leads to our main technical challenge! The non-strictness of $\infty$-categories poses a serious challenge when trying to construct simplicial objects, as it involves infinitely many coherence conditions. 
 One way to avoid this issue is to try to strictify our constructions and use strict categories with homotopical data. This has been done very effectively in \cite{stenzel2019univalence}, where Stenzel uses model categories and relative categories \cite{barwickkan2012relativecategory} to study univalence. However, we will avoid this approach for several reasons. First  of all, although relative categories are a model of $(\infty,1)$-categories, they are not an $\infty$-cosmos. In particular, the category theory of relative categories has received far less attention that quasi-categories or even complete Segal spaces. However, one important goal of this work is to illustrate how univalence can relate to topics of interest among higher category theorists, as we will illustrate in \cref{sec:univ and topos}/\cref{sec:examples}.

 Hence, we will stay inside an $\infty$-categorical setting that avoids strictifying the $\infty$-category. Rather we use the fibrations of $\infty$-categories to construct the desired Segal object. 
 Unfortunately, as one might expect, making this idea precise is quite technical and relies on the theory of {\it representable Cartesian fibrations} \cite{rasekh2017cartesian}. Hence, the proof has its own section (\cref{sec:proof}). In this section we introduce the definitions that are needed to state the main theorem and some important implications, relegating the technical definitions and proofs to \cref{sec:proof}.
 
 \begin{defone} \label{def:seg obj}
 	Let $\C$ be a finitely complete $\infty$-category.
   A simplicial object $W: \DDelta^{op} \to \C$ is a {\it Segal object} if for all $n \geq 2$ the induced map 
  $$W_n \to W_1 \times_{W_0} ... \times_{W_0} W_1$$
  is an equivalence in $\C$.	
 \end{defone}
 A Segal object satisfies many desirable properties of an internal $\infty$-category: It has objects, morphisms, equivalences, mapping spaces composition ... \cite[Section 3]{rasekh2017cartesian}. However, unlike in the context of $1$-categories, it is in fact not an internal $\infty$-category. 
 For example we would like a fully faithful and essentially surjective functor to be an equivalence, however, this fails for Segal objects, as can be seen in the case Segal objects in the $\infty$-category of spaces, so-called Segal spaces \cite[Theorem 7.7]{rezk2001css}. The fix is to add the {\it completeness condition}.
 \begin{defone}\label{def:cso}
 	Let $\C$ be a finitely complete $\infty$-category.
 	A Segal object $W: \DDelta^{op} \to \C$ is a {\it complete Segal object} if the square 
 	\begin{center}
 		\begin{tikzcd}
 			W_0\arrow[r] \arrow[d, "\Delta"] & W_3 \arrow[d, "(d_1d_0\comma d_1d_3)"] \\
 			W_0 \times W_0 \arrow[r, "s_0 \times s_0"] & W_1 \times W_1
 		\end{tikzcd}
 	\end{center} 
 	is a pullback square in $\C$.
 \end{defone} 
 Continuing our discussion above, fully faithful and essentially surjective functors of complete Segal objects are in fact equivalences \cite[Theorem 3.13]{rasekh2017cartesian} and complete Segal objects in spaces, {\it complete Segal spaces}, do in fact give us $\infty$-categories  \cite{toen2005unicity,rezk2001css,joyaltierney2007qcatvssegal,bergner2007threemodels,riehlverity2018elements} and so we consider complete Segal objects as our model of {\it internal $\infty$-categories}. 
 
  In \cref{cor:univalence segal set completion} we used the fact that every Segal object in a strict $1$-category gives us a functor valued in simplicial sets. We want to generalize this construction to Segal objects in $\infty$-categories, this requires us dealing with the technicalities of {\it representable Cartesian fibrations}, which generalize representable right fibrations (\fibref{item:rep right fib}). Using representable Cartesian fibrations, we can prove that for every Segal object $W: \DDelta^{op} \to \C$, there exists a functor that takes an object $X$ to the Segal space
  \begin{center}
  	\simpset{\Map_\C(X,W_0)}{\Map_\C(X,W_1)}{\Map_\C(X,W_2)}{}{}{}{}	
  \end{center}
  which we simply denote by $\Map(X,W)$. In particular, we can use a complete Segal space to construct a Cartesian fibration $\C_{/W} \to \C$. Every Cartesian fibration equivalent to one of this form is called {\it representable}. For more details on this construction see \cref{sec:proof}.
  
  \begin{remone} \label{rem:css v inf}
  	Given that until now our theory of $\infty$-categories relied on an $\infty$-cosmos biequivalent to quasi-categories (\fibref{item:cosmos}), the fact that we are using complete Segal spaces requires some explanation. There is a biequivalence between the $\infty$-cosmos of complete Segal spaces and our theory of $\infty$-categories given via the nerve and underlying construction, which is explained in more detail in \fibref{item:css}. Given this equivalence, in order to avoid adding additional notation we will keep denoting the underlying complete Segal space corresponding to an $\infty$-category by $\C$ and will simply refer to it by the complete Segal space.
  \end{remone}

  Finally, fix a morphism $p: E \to B$. We define the Cartesian fibration $\sO_\C^p \to \C$ as the full sub-Cartesian fibration of the target fibration $\sO_\C$ consisting of morphisms that can be obtained as a pullback of $p$. We denote the fiber of $\sO_\C^p \to \C$ over an object $X$ by $(\C_{/X})^{(p)}$ and note it is a full subcategory of the over-category $\C_{/X}$.
  
 We can now state our main theorem relating univalence to Segal objects. 
 
 \begin{theone} \label{the:main theorem}
 	Let $\C$ be a finitely complete $\infty$-category and $p: E \to B$ be a morphism such that pulling back along $E \times B \to B \times B$ has a right adjoint.
 	\begin{enumerate}
 		\item There exists a (homotopically) unique Segal object $\n(p)$ such that $\n(p)_0 \simeq B$ and $\n(p)_1 \simeq \morphism{(\pi_1)^*E}{(\pi_2)^*E}_{B \times B}$. 
 		\item For an object $X$, there exists a natural map of Segal spaces 
 		$$\Map_\C(X,\n(p)) \to \C_{/X},$$
 		where $\C_{/X}$ denotes the underlying complete Segal space (\cref{rem:css v inf}). The essential image is $(\C_{/X})^{(p)}$, the full sub-$\infty$-category consisting of morphisms that can be obtained as a pullback of $p$.
 		\item The following are equivalent.
 		\begin{enumerate}
 			\item $p$ is univalent.
 			\item {\bf Representability:} The Cartesian fibration $\sO^p_\C \to \C$ is represented by $\n(p)$.
 			\item {\bf Univalent Completion:} The map $\Map_\C(X,\n(p)) \to (\C_{/X})^{(p)}$ is a natural equivalence of Segal spaces for every object $X$.
 			\item {\bf Completeness:} The Segal object $\n(p)$ is complete.
  		\end{enumerate} 
 	\end{enumerate}
 \end{theone}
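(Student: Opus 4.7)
The plan is to split the proof along the structural lines of the statement itself: first construct the Segal object $\n(p)$, then interpret part (2) as the defining universal property of that construction, and finally derive the four-way equivalence in part (3) from the interplay of representability, the Segal condition, and the pullback characterization of $\cE$ in \cref{cor:q}. Because $\n(p)$ must be an honest simplicial object in $\C$ (not merely a simplicial diagram of mapping spaces), the construction step has to use the theory of representable Cartesian fibrations in the sense of \cite{rasekh2017cartesian}, which is the machinery I would assume is developed in \cref{sec:proof}.

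For part (1), the key is to package the expected internal $\infty$-category structure into a Cartesian fibration that can then be shown to be representable by a Segal object. I would define $\sO^p_\C \to \C$ as the full sub-Cartesian fibration of $\sO_\C$ whose sections over $X$ are precisely the morphisms that are pullbacks of $p$, and establish representability by a levelwise check: the $0$-level is represented by $B$ by construction, and the $1$-level is represented by $\morphism{(\pi_1)^*E}{(\pi_2)^*E}_{B \times B}$ by the same argument as in the proof of \cref{lemma:q}, for which the right-adjoint hypothesis is exactly what is needed. Higher levels are then forced by the Segal condition as iterated fibre products of the $1$-level over the $0$-level. Uniqueness is automatic since a Segal object is determined up to equivalence by its $0$- and $1$-levels together with the source/target/identity data. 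Part (2) then becomes a tautology: the representability identification gives for every $X$ the desired map of Segal spaces $\Map_\C(X,\n(p)) \to \C_{/X}$, and its essential image is the sub-Segal-space classified by $\sO^p_\C$, namely $(\C_{/X})^{(p)}$.

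For the equivalences in (3), I would first note that (b) $\Leftrightarrow$ (c) is the translation between representability of a Cartesian fibration and the induced map of Segal spaces of sections, so these two are equivalent by construction. For (c) $\Rightarrow$ (a), passing to the maximal sub-$\infty$-groupoids levelwise recovers the defining fully faithfulness of \cref{def:univalence}. The interesting step is (a) $\Leftrightarrow$ (d). Using the Segal condition one computes $\n(p)_3 \simeq \cM \times_B \cM \times_B \cM$ where $\cM = \morphism{(\pi_1)^*E}{(\pi_2)^*E}_{B \times B}$, and by unwinding face maps the completeness square for $\n(p)$ becomes, up to coherent identifications, the pullback square \ref{eq:q} defining $\cE$. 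Completeness of $\n(p)$ is therefore the assertion that $B \to \cE$ is an equivalence, which by \cref{cor:q} is exactly univalence of $p$. Closing the loop (d) $\Rightarrow$ (b) then follows from the observation that a complete Segal object in $\C$ represents a sub-Cartesian fibration of $\sO_\C$ whose fibre over $X$ is precisely the $\infty$-category of sections classifying pullbacks of $p$.

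The main obstacle, and the reason the proof has to be deferred to \cref{sec:proof}, is part (1): assembling the candidate representing objects $B$, $\cM$, $\cM \times_B \cM$, $\ldots$ into a \emph{coherent} simplicial object in $\C$. In a strict $1$-category this amounts to a finite verification, as seen in \cref{sec:strict cat}, but in the $\infty$-categorical setting it requires infinitely many compatibilities between face and degeneracy maps. The cleanest way around this, which I would follow, is to never construct the simplicial object directly: instead, build the Cartesian fibration $\sO^p_\C \to \C$ first (where coherence is baked into the model) and then invoke a representability criterion for Cartesian fibrations by Segal objects to extract $\n(p)$, so that coherence is discharged by the general machinery rather than by explicit diagram chases.
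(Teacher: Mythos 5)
There is a genuine gap, and it sits exactly where you locate the main difficulty: the construction of $\n(p)$ in part (1). You propose to obtain $\n(p)$ as the Segal object representing the full sub-Cartesian fibration $\sO^p_\C \to \C$ of pullbacks of $p$, asserting that ``the $0$-level is represented by $B$ by construction.'' This fails for general $p$: the underlying right fibration $\RFib_0(\sO^p_\C)$ has fiber over $X$ the space of morphisms over $X$ that arise as pullbacks of $p$, and the canonical map $\C_{/B} \to \RFib_0(\sO^p_\C)$ is essentially surjective but is fully faithful (hence an equivalence) if and only if $p$ is univalent (\cref{prop:univ final}). So your recipe only produces $\n(p)$ when $p$ is already univalent; worse, if $\n(p)$ were \emph{defined} as the representing object of $\sO^p_\C$, then condition (3)(b) would be a tautology and could not be equivalent to univalence. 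The theorem needs $\n(p)$ to exist unconditionally, with univalence detected by whether the (always defined) comparison with $\sO^p_\C$ is an equivalence.

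The paper's proof avoids this by introducing an auxiliary Reedy right fibration $\sO^{p/B}_\C$, defined as the pullback of the counit $\sO_\C \to \Cech\RFib_0(\sO_\C)$ along $\Cech(\C_{/B}) \to \Cech\RFib_0(\sO_\C)$ (\cref{lemma:main theorem fib exists}). This forces $\RFib_0(\sO^{p/B}_\C) \simeq \C_{/B}$ by construction, while the $1$-level is shown to be represented by $\morphism{(\pi_1)^*E}{(\pi_2)^*E}_{B \times B}$ via an explicit final object (\cref{lemma:main theorem one rep}), and \cref{prop:rep seg fib} then extracts $\n(p)$. Univalence enters only afterwards: the map $\sO^{p/B}_\C \to \sO^p_\C$ is an equivalence iff $\C_{/B} \to \Oall_\C$ is an inclusion, and completeness of $\sO^{p/B}_\C$ is computed to amount to the same condition (\cref{lemma:main theorem completeness}). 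Your sketch of (a) $\Leftrightarrow$ (d) via the pullback square of \cref{cor:q} is in the right spirit (it mirrors the strict discussion of \cref{sec:strict cat}), but it cannot be run until $\n(p)$ has been constructed for non-univalent $p$. A secondary point: uniqueness of a Segal object is not ``automatic'' from its $0$- and $1$-levels, since the composition face maps are genuine extra data; here uniqueness comes from the Yoneda lemma for representable Reedy right fibrations (\cref{prop:yoneda reedy right}).
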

 
 The proof of this statement is the goal of \cref{sec:proof}. For the remainder of this section we want to look at some interesting implications of this result and in particular how the various equivalent conditions give us different perspectives on univalence. 
 
 {\bf Representability:}
 In \cref{prop:univ final} we proved that if $p$ is univalent then there is an inclusion of right fibrations $\C_{/B} \hookrightarrow \Oall_\C$. The essential image of this inclusion is precisely the morphisms that can be obtained as a pullback of $p$. Hence, according to \cref{prop:univ final}, univalence implies that $B$ represents the full sub-right fibration of $\Oall_\C$ consisting of morphisms that are pullbacks of $p$.
 
 From that perspective the representability condition for the Segal object $\n(p)$ can be seen as a direct lift of the statement in \cref{prop:univ final} from right fibrations to Cartesian fibrations. We can equivalently say that the representability of the Cartesian fibration is completely determined by the representability of its underlying right fibration (\fibref{item:underlyng right fib}).
 This result might seem surprising as usually $\infty$-categorical properties are not just determined by their objects. However, the data in question is that of a full subcategory, which in fact is completely determined by a choice of objects.
  
 We can use this observation to generalize another part of \cref{prop:univ final}.
 
  \begin{corone}
  	Let $\C$ be finitely complete $\infty$-category and $p: E \to B$ be a morphism such that pulling back along $E \times B \to B \times B$ has a right adjoint. Then $p$ is univalent if and only if the map $\C_{/\n(p)} \to \sO_\C$ is an inclusion of Cartesian fibrations.
  \end{corone}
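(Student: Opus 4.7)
The plan is to deduce the corollary directly from parts (2) and (3)(b) of \cref{the:main theorem}, combined with the tautological fact that the inclusion $\sO^p_\C \hookrightarrow \sO_\C$ is, by the very definition of $\sO^p_\C$ as a full sub-Cartesian fibration, already an inclusion of Cartesian fibrations.

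For the forward direction, assume $p$ is univalent. Part (3)(b) of \cref{the:main theorem} then tells us that the Cartesian fibration $\sO^p_\C \to \C$ is represented by $\n(p)$, i.e.\ there is an equivalence $\C_{/\n(p)} \simeq \sO^p_\C$ of Cartesian fibrations over $\C$. Composing with the defining full inclusion $\sO^p_\C \hookrightarrow \sO_\C$ yields the desired fully faithful map $\C_{/\n(p)} \to \sO_\C$ over $\C$.

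For the converse, assume $\C_{/\n(p)} \to \sO_\C$ is an inclusion of Cartesian fibrations. By part (2) of \cref{the:main theorem}, the fiber of this map at each object $X$ is the natural map $\Map_\C(X,\n(p)) \to \C_{/X}$, whose essential image is $(\C_{/X})^{(p)}$. Since Cartesian fibrations are determined fiberwise via straightening (\fibref{item:grothendieck}), and full sub-Cartesian fibrations correspond to fiberwise full sub-$\infty$-categories, the functor $\C_{/\n(p)} \to \sO_\C$ factors (in Cartesian fibrations over $\C$) through $\sO^p_\C \hookrightarrow \sO_\C$, with the first factor $\C_{/\n(p)} \to \sO^p_\C$ essentially surjective on every fiber. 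The composite is fully faithful by hypothesis and the second factor is fully faithful by construction, so $\C_{/\n(p)} \to \sO^p_\C$ is itself fully faithful. Being both fully faithful and essentially surjective fiberwise, it is an equivalence of Cartesian fibrations, and applying (3)(b) of \cref{the:main theorem} in the opposite direction concludes that $p$ is univalent.

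The main obstacle I expect is justifying cleanly the factorization step in the converse direction, namely that fiberwise containment of the essential image in $(\C_{/X})^{(p)}$ lifts to an actual factorization $\C_{/\n(p)} \to \sO^p_\C$ in the $\infty$-category of Cartesian fibrations over $\C$. This is purely a compatibility check with the Grothendieck correspondence (\fibref{item:grothendieck}): a morphism into a full sub-Cartesian fibration whose fiberwise image lands in the specified full subcategories factors uniquely through it. The remainder is only standard cancellation for fully faithful functors and the definitional identification of $\sO^p_\C$ as a full sub-Cartesian fibration of $\sO_\C$.
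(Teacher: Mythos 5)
Your proposal is correct, and the forward direction is exactly the argument the paper intends: univalence gives the representability $\C_{/\n(p)}\simeq\sO^p_\C$ via \cref{the:main theorem}(3)(b), and one composes with the defining full inclusion $\sO^p_\C\hookrightarrow\sO_\C$. For the converse you stay at the level of Cartesian fibrations, factor through $\sO^p_\C$ using the fiberwise essential-image description from \cref{the:main theorem}(2), and conclude by cancellation of fully faithful functors plus fiberwise essential surjectivity; this works, and your flagged ``obstacle'' (that a fiberwise landing in a full sub-Cartesian fibration lifts to a factorization over $\C$) is indeed the only point needing care and is a routine consequence of \fibref{item:grothendieck}. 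The paper's surrounding discussion suggests a slightly more economical converse: an inclusion of Cartesian fibrations restricts to an inclusion of underlying right fibrations (\fibref{item:underlyng right fib}), and since $\RFib_0(\C_{/\n(p)})\simeq\C_{/\n(p)_0}\simeq\C_{/B}$ while the underlying right fibration of $\sO_\C$ is $\Oall_\C$, one lands directly in condition (2) of \cref{prop:univ final}; this is what the remark that ``representability of the Cartesian fibration is completely determined by the representability of its underlying right fibration'' is pointing at. One further point worth noting either way: for the phrase ``inclusion of Cartesian fibrations'' to apply literally, $\C_{/\n(p)}$ must itself be a Cartesian (not merely Segal Cartesian) fibration, which already forces $\n(p)$ to be complete and hence $p$ univalent by \cref{the:main theorem}(3)(d); your argument is the right one under the more generous reading where one only assumes the map of Segal Cartesian fibrations is fiberwise fully faithful.
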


 Finally, note this means we can use \cref{the:main theorem} to give an alternative (far more complicated) proof of \cref{prop:univ final}. 
 
 {\bf Univalent Completion:}
 The fibration $\sO_\C^{(p)} \to \C$ is complete by its very construction and so we should think of the inclusion  $\Map(-,\n(p)) \to (\C_{/-})^{(p)}$
 as a ``completion functor" that takes a Segal space to its universal complete Segal space, where universality follows from the fact that $\n(p)$ is complete if and only it the map is an equivalence. This relation between Segal objects and their univalent completions has been studied in the context of relative categories (and in particular model categories) by Stenzel \cite[Section 5]{stenzel2019univalence} and in particular \cite[Proposition 5.16, Theorem 5.18]{stenzel2019univalence}.
 
 It is interesting to observe the contrast between the $1$-categorical completions (\cref{rem:csset}) and the $\infty$-categorical completion. In the $1$-categorical setting, the completion was not guaranteed to actually be a category (or Segal set) (\cref{rem:hackney example}). We had to restrict to $0$-categories to recognize the univalent completion as the associated poset (\cref{cor:complete zero cat completion}). This difference is strongly related to the fact that in the $\infty$-categorical setting univalence corresponds to the existence of certain internal $\infty$-categories (as explained below), whereas in the $1$-categorical setting univalence corresponds to the existence of internal $1$-categories without non-trivial automorphisms (\cref{cor:univalence int cat}), which is quite restrictive, and only in the $0$-categorical case to the existence of posets (\cref{cor:complete zero cat}).
 
 We do not expect univalent completions to exist in every $\infty$-category, however, do they exist in many $\infty$-categories we care about. 
 In fact, with the benefit of hindsight, we can think of the {\it completion functor} for Segal spaces due to Rezk \cite[Section 14]{rezk2001css} as a proof that completions always exist in the $\infty$-category of spaces. This was in fact also proven by van den Berg and Moerdijk \cite[Theorem 5.1]{bergmoerdijk2018univalentcompletion} using the definition of univalence due to Kapulkin and Lumsdaine (motivated by work of Voevodsky) \cite{kapulkinlumsdaine2012kanunivalent}.
 
 This observation can be vastly generalized. Concretely, let us denote $\Seg(\C)$ be the $\infty$-category of Segal objects (taken as a full sub-category of all simplicial objects in $\C$) and let $\CSS(\C)$ be the full sub-$\infty$-category of complete Segal objects. Then we have following result.
 
  \begin{corone}
 	Let $\C$ be an $\infty$-category such that the inclusion functor $\CSS(\C) \hookrightarrow \Seg(\C)$ has a left adjoint $\Comp: \Seg(\C) \to \CSS(\C)$ (for example if $\C$ is a presentable quasi-category \cite[Corollary 5.5.2.9]{lurie2009htt}). Then, for every given morphism $p$, $\Comp(\n(p))$ is the completion of $\n(p)$, meaning it represents $\sO_\C^{(p)} \to \C$.
 \end{corone}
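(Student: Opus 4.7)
The plan is to verify that $\Comp(\n(p))$ represents the Cartesian fibration $\sO_\C^{(p)} \to \C$ by establishing, via the main theorem's representability characterization, a natural equivalence of complete Segal spaces $\Map_\C(X, \Comp(\n(p))) \simeq (\C_{/X})^{(p)}$ for every $X \in \C$.

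I would start with the easy observations. The object $\Comp(\n(p))$ is complete by construction, and $\Map_\C(X,-)$ preserves limits, so it sends the Segal condition (\cref{def:seg obj}) and the completeness condition (\cref{def:cso}) in $\C$ to the corresponding conditions in the $\infty$-category of spaces; in particular $\Map_\C(X, \Comp(\n(p)))$ is a complete Segal space. The unit $\eta_{\n(p)}: \n(p) \to \Comp(\n(p))$ of the adjunction induces a map of Segal spaces $\Map_\C(X, \n(p)) \to \Map_\C(X, \Comp(\n(p)))$, and the main theorem part (2) supplies another map of Segal spaces $\alpha_X: \Map_\C(X, \n(p)) \to (\C_{/X})^{(p)}$; both have complete targets.

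The heart of the argument is to show that $\Map_\C(X, \Comp(\n(p)))$ and $(\C_{/X})^{(p)}$ both solve the same universal problem: each is a Rezk completion of $\Map_\C(X, \n(p))$ in the $\infty$-category of Segal spaces (which is presentable, so completion exists by \cite[Corollary 5.5.2.9]{lurie2009htt}), hence they are canonically equivalent. For the first, I would argue that $\Map_\C(X,-)$ commutes with the completion reflectors, i.e.\ $\Map_\C(X, \Comp(W)) \simeq \Comp_\s(\Map_\C(X, W))$ for any Segal object $W$; this uses that $\Map_\C(X,-)$ is limit-preserving, respects the inclusion $\CSS(\C) \hookrightarrow \Seg(\C)$, and the uniqueness of left adjoints in the adjunction $\Comp \dashv \iota$. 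For the second, the essential surjectivity of $\alpha_X$ (established in the main theorem part (2)) together with the description of mapping spaces in $(\C_{/X})^{(p)}$ developed in \cref{sec:proof} identify $(\C_{/X})^{(p)}$ with the universal complete Segal space receiving a map from $\Map_\C(X, \n(p))$.

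The primary obstacle is the commutation $\Map_\C(X,-) \circ \Comp \simeq \Comp_\s \circ \Map_\C(X,-)$, which is not purely formal and requires carefully comparing the internal completion in $\C$ with Rezk's completion in spaces; it is at this point that the existence of the left adjoint $\Comp$ (rather than merely the condition of completeness being checkable objectwise) is essential. Once this commutation and the universal property of $(\C_{/X})^{(p)}$ are in hand, naturality in $X$ follows automatically from the naturality of $\eta$ and $\alpha_X$. This yields the equivalence of complete Segal objects identifying $\Comp(\n(p))$ as the representing object of $\sO_\C^{(p)} \to \C$, which is what is meant by calling it the completion of $\n(p)$.
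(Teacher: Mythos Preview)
Your overall strategy is sound, and the identification of $(\C_{/X})^{(p)}$ as the Rezk completion of $\Map_\C(X,\n(p))$ is correct: essential surjectivity is part (2) of the main theorem, and full faithfulness follows from \cref{lemma:main theorem one rep}, which identifies the mapping spaces on both sides with $\Map_{/X}(f^*E,g^*E)$. So that half of the argument is fine.

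The gap is exactly where you flag it, but your proposed resolution does not work. You want
\[
\Map_\C(X,\Comp(W)) \simeq \Comp_\s\bigl(\Map_\C(X,W)\bigr),
\]
and you justify this by ``limit preservation, respecting the inclusion $\CSS(\C)\hookrightarrow\Seg(\C)$, and uniqueness of left adjoints''. That reasoning is backwards. Limit preservation tells you that $\Map_\C(X,-)$ carries complete Segal objects to complete Segal spaces, i.e.\ it intertwines the \emph{right} adjoints (the inclusions $\iota$). Uniqueness of adjoints then yields a comparison between \emph{left} adjoints of $\Map_\C(X,-)$ composed with the two $\Comp$'s---the dual square. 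It says nothing about commuting the right adjoint $\Map_\C(X,-)$ past the left adjoint $\Comp$. There is no formal reason a limit-preserving functor should commute with a reflector, and in general it will not.

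What is actually needed is information about the unit $\eta:\n(p)\to\Comp(\n(p))$ itself: one must know that $\eta$ is an \emph{internal} Dwyer--Kan equivalence (fully faithful and essentially surjective as a map of Segal objects in $\C$), so that $\Map_\C(X,\eta)$ is a DK equivalence of Segal spaces with complete target and hence a completion map. Full faithfulness of $\eta$ is a pullback condition and would survive $\Map_\C(X,-)$, but essential surjectivity is not a limit condition, and neither property follows from the bare hypothesis that $\Comp$ exists as an abstract left adjoint. The paper states this corollary without proof; to make your argument go through you would need either an additional hypothesis (e.g.\ that $\Comp$ is the localization at internal DK equivalences, as holds in the presentable case) or a different route that avoids the commutation---for instance, arguing directly at the level of Segal Cartesian fibrations that the fiberwise-DK map $\sO_\C^{p/B}\to\sO_\C^{p}$ already exhibits $\sO_\C^{p}$ as the reflection into Cartesian fibrations, and then invoking \cref{prop:yoneda reedy right} to transport this to $\Comp(\n(p))$.
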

 
 Having an internal completion allows us to reformulate univalence as follows.
 
 \begin{corone}
 	Let $\C$ be finitely complete $\infty$-category and $p: E \to B$ be a morphism such that pulling back along $E \times B \to B \times B$ has a right adjoint. Assume the Cartesian fibration $\sO_\C^{p} \to \C$ is representable by a complete Segal object $\overline{\n(p)}$, which comes with a map 
 	$\n(p) \to \overline{\n(p)}$. Then $p$ is univalent if and only if this map is an equivalence. 
\end{corone}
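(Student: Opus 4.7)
The plan is to invoke the equivalence of conditions (c) \emph{Univalent Completion} and (d) \emph{Completeness} in \cref{the:main theorem}(3), combined with the Yoneda lemma, to compare the Segal object $\n(p)$ with the given complete representing object $\overline{\n(p)}$.

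The reverse direction is essentially immediate. Suppose the comparison map $\n(p) \to \overline{\n(p)}$ is an equivalence. Since the completeness condition of \cref{def:cso} is defined by a pullback square in $\C$ and is therefore an equivalence-invariant property of a Segal object, completeness of $\overline{\n(p)}$ transfers along the equivalence to give completeness of $\n(p)$, and \cref{the:main theorem}(3)(d) then yields that $p$ is univalent.

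For the forward direction, suppose $p$ is univalent. By \cref{the:main theorem}(3)(c), the natural map $\Map_\C(X, \n(p)) \to (\C_{/X})^{(p)}$ is an equivalence of Segal spaces for every object $X$. The hypothesis that $\overline{\n(p)}$ represents $\sO_\C^{p} \to \C$ gives a natural equivalence $\Map_\C(X, \overline{\n(p)}) \simeq (\C_{/X})^{(p)}$. The comparison map $\n(p) \to \overline{\n(p)}$ is, by construction, the one induced from the universal property of $\overline{\n(p)}$ as representing object, so that post-composition with $\Map_\C(X, \overline{\n(p)}) \simeq (\C_{/X})^{(p)}$ recovers the natural transformation $\Map_\C(X, \n(p)) \to (\C_{/X})^{(p)}$. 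Since both arrows in this factorization are equivalences, so is the induced map $\Map_\C(X, \n(p)) \to \Map_\C(X, \overline{\n(p)})$ for every $X$, and the Yoneda lemma (\fibref{item:rep right fib}) forces $\n(p) \to \overline{\n(p)}$ to be an equivalence. The only mild obstacle is the bookkeeping ensuring that the comparison map in the statement is the one arising from representability, so that both natural transformations into $(\C_{/X})^{(p)}$ are coherently compatible; once this is pinned down, the result is a transparent combination of \cref{the:main theorem} with Yoneda.
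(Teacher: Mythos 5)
Your proposal is correct and follows essentially the argument the paper intends: the corollary is stated there without proof, as an immediate consequence of the equivalences (b)/(c)/(d) in \cref{the:main theorem} together with the Yoneda lemma for representable (Reedy right) fibrations (\cref{prop:yoneda reedy right}), which is exactly the factorization-plus-Yoneda argument you give for the forward direction and the transfer-of-completeness argument you give for the converse.
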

 
  There are other environments where univalent completions have been studied as well.
  For example, the construction has been adapted to the setting of homotopy type theory under the name  {\it univalent completions}, {\it Rezk completions} or {\it stack completion}, where it is used to construct univalent categories out of precategories \cite{ahrenskapulkinshulman2015rezkcompletion}, \cite[Section 9.9]{hottbook2013}. 
  
  Another interesting and maybe more unexpected example is the setting $(\infty,n)$-categories. In \cite{ayalafrancis2018flagged} Ayala and Francis define {\it flagged $\infty$-categories} as a model independent approach to Segal spaces and give a construction of univalent completion in their setting, which results in an $(\infty,n)$-category. 
 
 {\bf Internal $\infty$-Categories:}
 At the end of \cref{sec:strict cat} we observed that the univalence condition seems to be oddly out of place; it imposes an additional condition on an already well-defined category object. It was only in the $0$-categorical case where univalence seemed to have a meaning, as in that case univalence corresponded to the internal preorder being an internal poset (\cref{cor:complete zero cat}) or a preorder being isomorphic to its associated poset (\cref{cor:complete zero cat completion}). 
 
 In the $\infty$-categorical setting, on the other side, the univalence condition is of utmost importance, as completeness is a necessary condition to define internal $\infty$-categories, without any restriction on the internal $\infty$-category. As explained before, Segal objects are in fact not enough to get internal $\infty$-categories. This observation, along with \cref{the:main theorem}, gives us following result.
 
 \begin{corone} \label{cor:univ cso}
 	 Let $\C$ be a finitely complete $\infty$-category and let $p:E \to B$ be a morphism such that pulling back along $E \times B \to B \times B$ has a right adjoint. Then the resulting simplicial object $\n(p)$ is an internal $\infty$-category if and only if $p$ is univalent.
 \end{corone}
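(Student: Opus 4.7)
The plan is to deduce this corollary directly from \cref{the:main theorem} by invoking the working definition of an internal $\infty$-category adopted in this section. Recall from the discussion surrounding \cref{def:cso} that in a finitely complete $\infty$-category $\C$, we have chosen to model an internal $\infty$-category as a complete Segal object in $\C$. This choice is justified on two grounds that have already been recorded in the excerpt: first, that bare Segal objects are insufficient, since in $\C = \s$ fully faithful and essentially surjective functors of Segal spaces fail to be equivalences; and second, that in the presence of the completeness condition we recover the standard models of $(\infty,1)$-categories in the case $\C = \s$ via the Joyal--Tierney / Rezk theorems, and more generally enjoy the expected behavior \cite[Theorem 3.13]{rasekh2017cartesian}.

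Given this convention, the argument proceeds in two short steps. First, under the standing hypothesis that pulling back along $E \times B \to B \times B$ admits a right adjoint, \cref{the:main theorem}(1) produces the simplicial object $\n(p)$ and asserts that it is a Segal object with $\n(p)_0 \simeq B$ and $\n(p)_1 \simeq \morphism{(\pi_1)^*E}{(\pi_2)^*E}_{B \times B}$. Second, the equivalence of conditions (a) and (d) in \cref{the:main theorem}(3) states precisely that $p$ is univalent if and only if this Segal object $\n(p)$ is complete. Chaining these two facts with the definitional identification of complete Segal objects as internal $\infty$-categories yields the claimed biconditional.

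Since the corollary is essentially a restatement of the completeness clause of \cref{the:main theorem} once we commit to complete Segal objects as our notion of internal $\infty$-category, no genuine obstacle arises at this stage. The only subtlety worth making explicit in the write-up is that, by \cref{the:main theorem}(1), the Segal object $\n(p)$ is uniquely determined (up to equivalence) by the data of $p$, so the phrase ``the resulting simplicial object'' in the statement is unambiguous and refers to the object constructed in \cref{sec:proof}.
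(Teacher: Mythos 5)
Your proposal is correct and matches the paper's own reasoning: the corollary is stated as an immediate consequence of \cref{the:main theorem}, combining the existence and uniqueness of $\n(p)$ from part (1) with the equivalence (a)$\Leftrightarrow$(d) of part (3), once complete Segal objects are adopted as the model of internal $\infty$-categories following \cref{def:cso}. No further argument is needed.
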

 
 In addition to that, this result also addresses the concern that arose at the end of \cref{sec:univ target} that tried to give internal meaning to \cref{cor:q} by describing univalence of a morphism as completeness of the associated Segal object. 
 From this perspective, \cref{lemma:q} can be understood as giving an explicit description of $\n(p)_{hoequiv}$ without having constructed (or understood) the Segal object. Note, this perspective on univalence via completeness has already been studied in the context of logical model categories \cite{arndtkapulkin2011modelstypetheory} by Stenzel \cite[Theorem 4.5, Corollary 4.6]{stenzel2019univalence}.
 
 Notice the opposite does not hold, meaning we cannot characterize completeness of a Segal object via univalence of a certain morphism. Indeed, not every Segal object can be obtained as the nerve of a morphism. For example, in the category of sets, the complete Segal objects are precisely the small categories without automorphisms (as discussed in \cref{sec:strict cat}). On the other hand, as we shall see in \cref{lemma:univ set}, there are only four univalent morphisms in the category of sets whose nerves give us the poset $\{ 0 \leq 1 \}$ and its three subcategories.
 
 We end this section with a noteworthy observation. 
 Given that the completeness condition was motivated by the desire to guarantee that fully faithful and essentially surjective functors are equivalences and the connection between completeness and univalence, we could ask ourselves whether there is an interesting way to understand univalence from this perspective. We will not further pursue this question in the context of this work.

 \section{Proof of the Main Theorem} \label{sec:proof}
  This section is dedicated to proving \cref{the:main theorem}. The reader who wants to avoid the technical details is encouraged to move on to \cref{sec:univ and topos}, where we look at connections to topos theory, or \cref{sec:examples}, where we look at further examples.
  
 Up until this section we used Cartesian fibrations and right fibrations to study univalence (\fibref{item:cart rfib}). Unfortunately, as we shall explain, the theory of Cartesian fibrations does not suffice anymore. The main step in the proof of \cref{the:main theorem} is to construct a Segal object $\n(p)$ out of a morphism $p: E\to B$. We want to construct this Segal object using representability, meaning we want to construct a fibration over $\C$ such that the fiber over an object $c$ is the Segal space 
 \begin{center}
 	\simpset{\Map_\C(\n(p)_0,c)}{\Map_\C(\n(p)_1,c)}{\Map_\C(\n(p)_2,c)}{}{}{}{}.
 \end{center}
 A fibration with such fiber is usually not a Cartesian fibration and in fact we want to conclude, as stated in \cref{the:main theorem}, that it is only Cartesian if $p$ is univalent. We need to construct fibrations fibered in Segal spaces as a direct generalization of Cartesian fibrations, which are fibered in $\infty$-categories, and that would allow us to study presheaves valued in Segal spaces. In fact, the whole proof in this section could be summarized as using various fibration and strictification arguments that reduce the proof to \cref{lemma:main theorem one rep}, which focuses on the object of morphisms, which also played the key role in the strict case (\ref{eq:m}). The relevant theory of fibrations has been introduced in \cite{rasekh2021cartfibcss,rasekh2021cartfibmarkedvscso,rasekh2017cartesian} and we will review the relevant results here.

A {\it Reedy right fibration over $\C$} \cite[Section 2]{rasekh2021cartfibcss} is a simplicial right fibration over $\C$, $\R_\bullet: \DDelta^{op} \to \RFib_{/\C}$ (where $\RFib_{/\C}$ was defined in \fibref{item:cart rfib}). Restricting to the object $[n]$ in $\DDelta$ gives us functors
$$\RFib_n: \Fun(\DDelta^{op},\RFib_{/\C}) \to \RFib_{/\C}$$
that takes a Reedy right fibration to the $n$-th level of our simplicial right fibration.

We say a Reedy right fibration $\R_\bullet$ is a {\it Segal Cartesian fibration} \cite[Section 4]{rasekh2021cartfibcss} if it satisfies the Segal conditions, meaning for $n \geq 2$ we have an equivalence of right fibrations 
$$\RFib_n(\R) \xrightarrow{ \ \simeq \ } \RFib_1(\R) \times_{\RFib_0(\R)} ... \times_{\RFib_0(\R)} \RFib_1(\R).$$
Moreover, for a Segal Cartesian fibration $\R$ define the {\it right fibration of equivalences}, $\R_{hoequiv}$, via following pullback diagram 
\begin{equation} \label{eq:r hoequiv}
 \begin{tikzcd}
 	\R_{hoequiv} \arrow[r] \arrow[d] & \RFib_3(\R) \arrow[d, "(d_1d_0\comma d_1d_3)"] \\
 	\RFib_0(\R) \times_\C \RFib_0(\R) \arrow[r, "s_0 \times s_0"] & \RFib_1(\R) \times_\C \RFib_1(\R) 
 \end{tikzcd}.
\end{equation}
We say a Segal Cartesian fibration $\R$ is a {\it Cartesian fibration} if it is {\it complete}, meaning the map $\RFib_0(\R) \to \R_{hoequiv}$ over $\RFib_0(\R) \times_\C \RFib_0(\R)$ induced by the map $\Delta: \RFib_0(\R) \to \RFib_0(\R) \times_\C \RFib_0(\R)$ is an equivalence of right fibrations. 
Initially this terminology might seem confusing given that we already have a notion of Cartesian fibration. However, we will show in \ref{eq:rfib} that these notions coincide appropriately.

Our key interest in simplicial right fibrations is because of {\it representability}. Let $W: \DDelta^{op} \to \C$ be a simplicial object in $\C$. Then there exists a Reedy right fibration $\C_{/W_\bullet} \to \C$, which at level $n$ is equivalent to the representable right fibration $\C_{/W_n} \to \C$ (\fibref{item:rep right fib}) \cite[Proposition 2.4]{rasekh2017cartesian}. Moreover, there is a {\it Yoneda lemma for representable Reedy right fibrations}.

\begin{propone}\label{prop:yoneda reedy right}
	\cite[Theorem 2.7]{rasekh2017cartesian}
	Let $W,V: \DDelta^{op} \to \C$ be two simplicial objects. Then there is an equivalence of mapping spaces 
	$$\Map_{\Fun(\DDelta^{op},\RFib_{/\C})}(\C_{/W},\C_{/V}) \simeq \Map_{\Fun(\DDelta^{op},\C)}(W,V).$$
	So, in particular, a morphism of Reedy right fibrations is an equivalence if and only if the corresponding map of fibrations is an equivalence. 
\end{propone}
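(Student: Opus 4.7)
The plan is to reduce the statement to the representability properties of right fibrations recalled in \fibref{item:rep right fib} by showing that the representable construction assembles into a fully faithful functor of functor $\infty$-categories. The representable right fibration construction gives a functor $y: \C \to \RFib_{/\C}$ sending $c \mapsto \C_{/c}$, and by \fibref{item:rep right fib} we already have an equivalence
$$\Map_{\RFib_{/\C}}(\C_{/c},\C_{/d}) \simeq \Map_\C(c,d),$$
so $y$ is fully faithful. Post-composition with $\DDelta^{op}$ then yields a functor
$$y_*: \Fun(\DDelta^{op},\C) \longrightarrow \Fun(\DDelta^{op}, \RFib_{/\C}),$$
which sends $W$ to the simplicial right fibration $\C_{/W_\bullet}$ that appears in the statement.

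Next I would invoke the general fact that post-composition with a fully faithful functor is itself fully faithful on functor $\infty$-categories. Concretely, the mapping space in $\Fun(\DDelta^{op},\RFib_{/\C})$ can be expressed as a totalization
$$\Map_{\Fun(\DDelta^{op},\RFib_{/\C})}(\C_{/W_\bullet},\C_{/V_\bullet}) \simeq \lim_{[n] \in \DDelta} \Map_{\RFib_{/\C}}(\C_{/W_n},\C_{/V_n}),$$
with an analogous description for $\Map_{\Fun(\DDelta^{op},\C)}(W,V)$ as $\lim_{[n]\in \DDelta}\Map_\C(W_n,V_n)$. Applying the level-wise Yoneda equivalence and noting that the diagrams defining these two limits are identified under $y$ (because $y$ is natural in $\C$, so it carries the face and degeneracy structure of $W,V$ to the corresponding structure on $\C_{/W_\bullet},\C_{/V_\bullet}$), we obtain the desired equivalence of mapping spaces.

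The final clause then follows formally: a map in $\Fun(\DDelta^{op},\RFib_{/\C})$ is an equivalence iff it is level-wise an equivalence of right fibrations, and a level-wise equivalence $\C_{/W_n}\to \C_{/V_n}$ corresponds via the Yoneda equivalence to an equivalence $W_n \to V_n$ in $\C$; since equivalences in $\Fun(\DDelta^{op},\C)$ are also detected level-wise, this gives the claim.

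The main subtlety I would expect is the coherent identification of the two totalizations. Passing from ``the Yoneda equivalence at each level'' to ``the Yoneda equivalence of cosimplicial diagrams of mapping spaces'' requires that the equivalence $\Map_{\RFib_{/\C}}(\C_{/c},\C_{/d}) \simeq \Map_\C(c,d)$ be natural in both variables up to coherent homotopy, i.e., that $y$ really enhances to a fully faithful functor of $\infty$-categories rather than merely a map on homotopy categories. In an $\infty$-cosmos this is precisely the content of the enriched Yoneda lemma cited in \fibref{item:rep right fib}, so the obstacle is essentially bookkeeping rather than conceptual; once fully faithfulness of $y$ is available, fully faithfulness of $y_*$ is automatic.
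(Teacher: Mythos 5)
The paper does not actually prove this statement; it is imported wholesale from \cite[Theorem 2.7]{rasekh2017cartesian}, so there is no internal argument to compare against. Your proof is correct and is the expected one: the Yoneda embedding $c \mapsto \C_{/c}$ is fully faithful by \fibref{item:rep right fib}, post-composition with a fully faithful functor is fully faithful on functor $\infty$-categories, and $\C_{/W_\bullet}$ is by construction the image of $W$ under this post-composition. Two bookkeeping points are worth tightening. First, the mapping space in $\Fun(\DDelta^{op},\RFib_{/\C})$ is an end over $\DDelta$, i.e.\ a limit indexed by the twisted arrow category, not a limit over $\DDelta$ itself as written; this does not affect the argument, since all you use is that a natural equivalence between the two defining diagrams induces an equivalence of their limits. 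Second, knowing that $\C_{/W_\bullet}$ is \emph{levelwise} representable by $W_n$ does not by itself identify it with $y_*W$ --- the simplicial structure maps must also correspond --- so you should either appeal to the construction in the reference (which builds $\C_{/W_\bullet}$ precisely as the composite $\DDelta^{op}\to\C\to\RFib_{/\C}$) or note that a simplicial object landing in the essential image of a fully faithful functor factors through it essentially uniquely. With those caveats, your reduction to the single-object Yoneda lemma and the final clause about detecting equivalences levelwise are both sound.
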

$W$ is a Segal object (\cref{def:seg obj}) if and only if $\C_{/W_\bullet} \to \C$ is a Segal Cartesian fibration \cite[Proposition 3.4]{rasekh2017cartesian}. By the same result, $W$ is a complete Segal object (\cref{def:cso}) if and only if $\C_{/W} \to \C$ is a Cartesian fibration. Hence, whenever we have a Segal object we can construct a Segal Cartesian fibration out of it. We want a result that helps us construct a Segal object out of a Segal Cartesian fibration. For that we have following result.

\begin{propone} \label{prop:rep seg fib}
	\cite[Theorem 4.3]{rasekh2017cartesian}
	Let $\C$ be an $\infty$-category with finite limits. 
	Let $\R \to \C$ be a Segal Cartesian fibration such that $\RFib_0(\R) \to \C$ and $\RFib_1(\R) \to C$ are both representable right fibration. Then there exists a Segal object $W: \DDelta^{op} \to \C$ such that the right fibration $\RFib_n(\R) \to \C$ is represented by $W_n$.
	Moreover, $W$ is a complete Segal object if and only if $\R\to \C$ satisfies the completeness condition.
\end{propone}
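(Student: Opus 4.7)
The plan is to leverage the representability hypothesis on $\RFib_0(\R)$ and $\RFib_1(\R)$ together with the Segal condition on $\R$, using that the Yoneda embedding preserves limits (\fibref{item:rep right fib limit}) and the Yoneda lemma for Reedy right fibrations (\cref{prop:yoneda reedy right}) to transport the simplicial structure from $\R_\bullet$ over to the representing objects.

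First, fix representations $\RFib_0(\R) \simeq \C_{/W_0}$ and $\RFib_1(\R) \simeq \C_{/W_1}$. The source, target and identity maps of $\R_\bullet$ are morphisms of right fibrations between $\RFib_0(\R)$ and $\RFib_1(\R)$, which by the Yoneda lemma (\fibref{item:rep right fib}) correspond to morphisms $s,t: W_1 \to W_0$ and $i: W_0 \to W_1$ in $\C$. For each $n \geq 2$ the Segal condition gives an equivalence of right fibrations
\[
\RFib_n(\R) \simeq \RFib_1(\R) \times_{\RFib_0(\R)} \cdots \times_{\RFib_0(\R)} \RFib_1(\R),
\]
and since Yoneda preserves limits, this iterated pullback is represented by the iterated fibered product $W_n := W_1 \times_{W_0} \cdots \times_{W_0} W_1$ in $\C$, which exists because $\C$ is finitely complete.

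The main obstacle is to assemble these individual representing objects into a simplicial object $W : \DDelta^{op} \to \C$ in a way that recovers $\R$ as its associated Reedy right fibration. The plan is to note that, having identified $\RFib_n(\R)$ with the representable right fibration $\C_{/W_n}$ levelwise, the face and degeneracy operators of the simplicial right fibration $\R_\bullet$ become morphisms of representable right fibrations $\C_{/W_m} \to \C_{/W_n}$. By the Yoneda lemma these correspond to morphisms $W_n \to W_m$ in $\C$, and the simplicial identities hold in $\C$ because they hold in $\RFib_{/\C}$ and Yoneda is fully faithful. This defines the simplicial object $W$, and the compiled equivalences $\C_{/W_n} \simeq \RFib_n(\R)$ upgrade via \cref{prop:yoneda reedy right} to an equivalence of Reedy right fibrations $\C_{/W_\bullet} \simeq \R$. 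That $W$ is a Segal object is automatic since its levels were defined by the very iterated pullbacks that characterize Segal objects (\cref{def:seg obj}).

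For the completeness statement, the right fibration of equivalences $\R_{hoequiv}$ is the pullback of the representable right fibrations $\RFib_0(\R) \times_\C \RFib_0(\R)$ and $\RFib_3(\R)$ over $\RFib_1(\R) \times_\C \RFib_1(\R)$, so it is itself represented by the pullback
\[
W_{hoequiv} := (W_0 \times W_0) \times_{W_1 \times W_1} W_3
\]
in $\C$. Under this identification, the canonical completeness comparison $\RFib_0(\R) \to \R_{hoequiv}$ corresponds, via Yoneda (\fibref{item:rep right fib}), precisely to the map $W_0 \to W_{hoequiv}$ appearing in \cref{def:cso}. Since equivalences of representable right fibrations correspond to equivalences of their representing objects, completeness of $\R$ is equivalent to $W$ being a complete Segal object.
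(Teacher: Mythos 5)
The paper does not actually prove this statement: it is quoted verbatim from an external source, cited as \cite[Theorem 4.3]{rasekh2017cartesian}, so there is no in-paper argument to compare yours against. Judged on its own terms, your outline is essentially the right one: levelwise representability of $\RFib_n(\R)$ for $n\geq 2$ does follow from the Segal condition together with the fact that Yoneda preserves limits (\fibref{item:rep right fib limit}), and the completeness clause does reduce, exactly as you say, to representing the pullback defining $\R_{hoequiv}$ and invoking full faithfulness of Yoneda to compare the two comparison maps.

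The one step I would push back on is the assembly of the $W_n$ into a simplicial object. The sentence ``the simplicial identities hold in $\C$ because they hold in $\RFib_{/\C}$ and Yoneda is fully faithful'' reads as a $1$-categorical construction by generators and relations, which does not by itself produce a functor $\DDelta^{op}\to\C$: in the $\infty$-categorical setting one needs the entire tower of coherence data, not just the identities among face and degeneracy maps. There is also a mild circularity in then invoking \cref{prop:yoneda reedy right} to ``upgrade'' the levelwise equivalences, since that proposition presupposes that $W$ and hence $\C_{/W_\bullet}$ have already been constructed. The clean repair uses only ingredients you already have: the Yoneda embedding $c\mapsto \C_{/c}$ is fully faithful with essential image the full subcategory of representable right fibrations, and you have shown that the functor $\R_\bullet:\DDelta^{op}\to\RFib_{/\C}$ takes all its values in that essential image; a functor landing objectwise in a full subcategory factors through it, so composing with an inverse of the equivalence between $\C$ and that full subcategory yields $W:\DDelta^{op}\to\C$ together with a natural equivalence $\C_{/W_\bullet}\simeq\R_\bullet$, with all coherence supplied by the factorization rather than checked by hand. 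With that substitution the argument is complete; the Segal condition for $W$ and the completeness equivalence then follow as you describe.
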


We now have reviewed all the relevant concepts regarding representable Segal Cartesian fibrations, which we need to prove \cref{the:main theorem}.
Recall that $\sO_\C^p \to \C$ denotes the full sub-Cartesian fibration of $\sO_\C \to \C$ consisting of morphisms that can be obtained as a pullback of $p$.
\begin{proof}[Proof of \cref{the:main theorem}] 
	{\it (1)}
	First we construct a new Reedy right fibration out of the Cartesian fibration $\sO_\C^p \to \C$, that we denote $\sO_\C^{p/B} \to \C$	 (\cref{lemma:main theorem fib exists}). We then show that $\sO_\C^{p/B} \to \C$ is a Segal Cartesian fibration  \cref{lemma:main theorem fib Segal}), such that $\RFib_0(\sO_\C^{p/B}) \to \C$ is represented by $B$ (\cref{lemma:main theorem fib exists}) and that $\RFib_1(\sO_\C^{p/B}) \to \C$ is represented by $\morphism{(\pi_1)^*E}{(\pi_2)^*E}_{B \times B}$ (\cref{lemma:main theorem one rep}). By \cref{prop:rep seg fib}, this means that  $\sO_\C^{p/B} \to \C$ is represented by a Segal object we denote by $\n(p)$. 
	
	{\it (2)} 
	By construction of $\sO_\C^{p/B}$ we have a map of Segal Cartesian fibrations $\sO_\C^{p/B} \to \sO_\C$ that we observe has essential image $\sO_\C^p$ (\cref{lemma:main theorem fib exists}). 

	{\it (3)} We will cycle through the conditions:
	
	{\it (a) $\Rightarrow$ (b)} If $p$ is univalent, then the map $\C_{/B} \to \RFib_0(\sO^p_\C)$ is an equivalence of right fibrations (\cref{prop:univ final}) and so we have an equivalence of Cartesian fibrations $\sO^{p/B}_\C \to \sO^p_\C$. By definition, the first Cartesian fibration is represented by $\n(p)$ and so by the equivalence the second one must be as well.
	
	{\it (b) $\Rightarrow$ (c)} By assumption, $\n(p)$ represents $\sO^p_\C$, which gives us an equivalence $\C_{/\n(p)} \to \sO^p_\C$. The result now follows from comparing fibers.
	
	{\it (c) $\Rightarrow$ (d)} By assumption, we have an equivalence $\C_{/\n(p)} \to \sO^p_\C$ and $\sO^p_\C$ is a Cartesian fibration, which implies that $\C_{/\n(p)}$ is a Cartesian fibration, which, by \cite[Proposition 3.4]{rasekh2017cartesian}, implies that $\n(p)$ is a complete Segal object.
	
	{\it (d) $\Rightarrow$ (a)} If $\n(p)$ is complete, then $\sO^{p/B}_\C\to \C$ is a Cartesian fibration, which, by \cref{lemma:main theorem completeness}, implies that $\C_{/B} \to \Oall_\C$ is an inclusion, which, by \cref{prop:univ final}, implies that $p$ is univalent.
\end{proof}
  Before we move on to prove all the necessary lemmas, we want to use \cref{the:main theorem} and the steps in its proof to give an alternative proof to \cref{prop:univ if mono}, giving an example how Segal objects can be used to give internal proofs and also give us a better understanding of univalence. 
  
  \begin{propone} \label{prop:univ if mono alt}
  	Let $\C$ be locally Cartesian closed.
  	Let $p: E \to B$ be univalent and assume we have following pullback square 
  	\begin{center}
  		\begin{tikzcd}
  			C \arrow[r] \arrow[d, "q"]  & E \arrow[d, "p"] \\
  			D \arrow[r] & B
  		\end{tikzcd}.
  	\end{center}
  	Then $q$ is univalent if and only if $D \to B$ is mono
  \end{propone}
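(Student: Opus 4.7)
The plan is to invoke \cref{the:main theorem} for both $p$ and $q$ to extract Segal objects $\n(p)$ and $\n(q)$, and then translate the univalence of $q$ into a completeness condition that compares directly with the (hypothesised) completeness of $\n(p)$. Since $\C$ is locally Cartesian closed, the right-adjoint hypothesis of \cref{the:main theorem} is automatic for every morphism, so both Segal objects exist, and \cref{the:main theorem}(3)(d) identifies univalence with completeness for each of them.

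The central calculation I would carry out is the natural identification
\begin{equation*}
\n(q)_n \simeq \n(p)_n \times_{B^{n+1}} D^{n+1},
\end{equation*}
where $D^{n+1} \to B^{n+1}$ is the $(n+1)$-fold product of $D \to B$. The case $n = 0$ is tautological. For $n = 1$, \cref{the:main theorem}(1) gives $\n(q)_1 \simeq \morphism{\pi_1^*C}{\pi_2^*C}_{D \times D}$ with $C = D \times_B E$; since $\C$ is locally Cartesian closed the internal hom commutes with pullback along $D \times D \to B \times B$, and combined with $\pi_i^*C \simeq (D \times D) \times_{B \times B} \pi_i^*E$ this yields $\n(q)_1 \simeq \n(p)_1 \times_{B \times B} (D \times D)$. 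The higher levels are then forced by the Segal condition via iterated fiber products over $D$ (respectively $B$).

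Given this identification, the completeness square of \cref{def:cso} for $\n(q)$ is obtained by base-change from the one for $\n(p)$ along $D \times D \to B \times B$ and $D^4 \to B^4$. Two successive pullback-pastings then show that the relevant corner $(D \times D) \times_{\n(q)_1 \times \n(q)_1} \n(q)_3$ identifies with $(D \times D) \times_{\n(p)_1 \times \n(p)_1} \n(p)_3$, which, using that $(D \times D) \to \n(p)_1 \times \n(p)_1$ factors through $B \times B$ via $s_0 \times s_0$ and that $\n(p)$ is complete, collapses further to $(D \times D) \times_{B \times B} B \simeq D \times_B D$. Thus $\n(q)$ is complete if and only if the diagonal $D \to D \times_B D$ is an equivalence, and by \fibref{item:over cat trunc}(III) this is exactly the condition that $D \to B$ be mono.

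The main obstacle I anticipate is the bookkeeping required to make the identification $\n(q)_n \simeq \n(p)_n \times_{B^{n+1}} D^{n+1}$ natural in $n$, so that the cube built from the two completeness squares genuinely admits the pullback-pasting arguments used above. A cleaner route may be to establish this identification at the level of the representable Reedy right fibrations $\sO^{q/D}_\C$ and $\sO^{p/B}_\C$ constructed in \cref{sec:proof}, and then transfer it to Segal objects via \cref{prop:yoneda reedy right}, rather than arguing level by level.
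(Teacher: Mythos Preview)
Your argument is correct, and in fact the ``cleaner route'' you sketch in your final paragraph is exactly what the paper does. Rather than establishing the level-wise identification $\n(q)_n \simeq \n(p)_n \times_{B^{n+1}} D^{n+1}$ internally and then chasing the completeness squares, the paper works directly with the Segal Cartesian fibrations $\sO^{q/D}_\C$ and $\sO^{p/B}_\C$: the pullback square induces a map $\sO^{q/D}_\C \to \sO^{p/B}_\C$, and because $\sO^{p/B}_\C$ is built as a pullback of $\sO_\C$ along $\Cech(\C_{/B}) \to \Cech(\Oall_\C)$ (\cref{lemma:main theorem fib exists}), one can compute $(\sO^{q/D}_\C)_{hoequiv}$ immediately as the pullback of $(\sO^{p/B}_\C)_{hoequiv} \simeq \C_{/B}$ along $\C_{/D} \times_\C \C_{/D} \to \C_{/B} \times_\C \C_{/B}$, reusing the argument of \cref{lemma:main theorem completeness}. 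Completeness of $\sO^{q/D}_\C$ then becomes exactly the condition that $\C_{/D} \to \C_{/B}$ is fully faithful, i.e.\ that $D \to B$ is mono.

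What your approach buys is a self-contained argument that never leaves the Segal objects themselves, at the cost of the simplicial bookkeeping you flag (in particular, tracking the reshuffling of factors in $B^4$ when comparing $(d_1d_0,d_1d_3)$ for $\n(q)$ and $\n(p)$). The paper's approach trades that bookkeeping for reliance on the fibrational machinery of \cref{sec:proof}, which absorbs the coherence issues once and for all; your identification $\n(q)_n \simeq \n(p)_n \times_{B^{n+1}} D^{n+1}$ is the representable shadow of the pullback square defining $\sO^{q/D}_\C$.
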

  
  \begin{proof}
  	The pullback diagram
  	gives us a map of right fibrations $\C_{/D} \to \C_{/B}$ over $\C$ (via the Yoneda lemma \fibref{item:rep right fib}). Using the construction of the Segal Cartesian fibration $\sO^{p/B}_\C$ (\cref{lemma:main theorem fib exists}), we get a map of Segal Cartesian fibration fibrations $\sO^{q/D}_\C \to \sO^{p/B}_\C$ over $\C$.
  	According to \cref{the:main theorem}, $\sO^{p/B}_\C \to \C$ is a Cartesian fibration, meaning $(\sO^{p/B}_\C)_{hoequiv}\simeq \C_{/B}$. 
  	Following the argument in the proof of \cref{lemma:main theorem completeness},  we can evaluate $(\sO^{q/D}_\C)_{hoequiv} $ as the following pullback square 
  	\begin{center}
  		\begin{tikzcd}
  			(\sO^{q/D}_\C)_{hoequiv} \arrow[r] \arrow[d] & \C_{/B}\arrow[d] \\
  			\C_{/D} \times_\C \C_{/D} \arrow[r] & \C_{/B} \times_\C \C_{/B} 
  		\end{tikzcd}.
  	\end{center}
    However, by definition given in \ref{eq:r hoequiv}, $\sO^{q/D}_\C \to \C$ is Cartesian if and only if $\C_{/D} \to (\sO^{q/D}_\C)_{hoequiv}$ is an equivalence. 
    This, by \fibref{item:over cat trunc}, is equivalent to $\C_{/D} \to \C_{/B}$ being fully faithful, which, again by \fibref{item:over cat trunc}, is equivalent to $D \to B$ being mono.
  \end{proof}
  
   We will now embark on a long journey and complete all the steps and lemmas used in the proof of \cref{the:main theorem}.
  The first steps is to understand the behavior of Cartesian fibrations as we change models of $\infty$-categories, which is the goal of the following remark.
  
  \begin{remone} \label{rem:model cart}
   Up until this point out our theory of $\infty$-categories has been an $\infty$-cosmos $\K$ biequivalent to quasi-categories (\fibref{item:cosmos}), for which we denote the very large (\fibref{item:universes}) quasi-category of Cartesian fibrations over $\C$ by $\Cart_{/\C}$ (\fibref{item:cart rfib}) . Our next construction involves a change of models and so in order to avoid any confusion, we will denote the quasi-category of Cartesian fibrations over $\C$ for a theory of $\infty$-categories $\K$ by $\Cart^\K_{/\C}$. Notice the biequivalence of $\infty$-cosmoi $((-)_0, 1 \otimes -)$ gives us an equivalence of quasi-categories $\Cart^\K_{/\C} \simeq \Cart^{\Q\cat}_{/\C_0}$.
  
  Now, restricting our attention to  quasi-categories, \cite[Subsection 3.2]{rasekh2021cartfibmarkedvscso} states that there is a fully faithful functor of quasi-categories 
  \begin{equation} \label{eq:rfib qcat}
  	\Cart^{\Q\cat}_{/\C} \to \Fun(\DDelta^{op},\RFib^{\Q\cat}_{/\C})
  \end{equation}
  induced by a zig-zag of Quillen equivalences. Moreover, a simplicial right fibration is in the essential image if and only if it satisfies the Segal and completeness condition, which justifies the terminology Cartesian fibration for the essential image (\ref{eq:r hoequiv}).
  If $\C=\Delta^0$, the final quasi-category, then we have equivalences $\RFib^{\Q\cat}_{/\Delta^0} \simeq \widehat{\s}$, $\Cart^{\Q\cat}_{/\Delta^0} \simeq \widehat{\cat}_\infty$ (\fibref{item:rfib over space}) and the functor  
  $$\RFib^{\Q\cat}: \cat_\infty \to \Fun(\DDelta^{op},\widehat{\s})$$
  takes a quasi-category $\C$ to the complete Segal space $\RFib(\C)$, given at level $n$ by $(\C^{\Delta^n})^\simeq$, the maximal sub-$\infty$-groupoid of $n$-simplices (\fibref{item:underlyinggroupoid}). This follows from the fact that the construction in \cite[Subsection 3.2]{rasekh2021cartfibmarkedvscso} fiber-wise reduces to the functor $t^!$ in \cite{joyaltierney2007qcatvssegal} (also denoted nerve in \cite[Example 1.3.9]{riehlverity2018elements}), which has precisely that description. Hence, we can think of $\RFib$ in \ref{eq:rfib qcat} as a fibrational generalization of the construction in \cite{joyaltierney2007qcatvssegal}.
  
  We can now compose this functor in \ref{eq:rfib qcat} in an  appropriate way, using the equivalences given in \fibref{item:cosmos}, to generalize it to $\K$ 
  \begin{equation} \label{eq:rfib}
  	\begin{tikzcd}
  		\RFib^\K: \Cart^{\K}_{/\C} \arrow[r, "\simeq"] & \Cart^{\Q\cat}_{/\C_0} \arrow[r, hookrightarrow] & \Fun(\DDelta^{op},\RFib^{\Q\cat}_{/\C_0}) \arrow[r, "\simeq"] & \Fun(\DDelta^{op},\RFib^{\K}_{/\C})
  	\end{tikzcd}.
  \end{equation}
  and notice that $\RFib$ satisfies analogous results, in particular it is fiber-wise characterized as taking $\infty$-categories to a simplicial object valued in discrete $\infty$-categories, which is the underlying complete Segal space (\fibref{item:css}). 
  We will henceforth use the resulting functor $\RFib^\K:\Cart^{\K}_{/\C} \to \Fun(\DDelta^{op},\RFib^{\K}_{/\C})$ only in the context of our theory of $\infty$-categories $\K$ again and hence drop the $\K$ from the notation.
 \end{remone}

  Given the explanation above, we realize that $\RFib_0$ takes a Cartesian fibration to the underlying right fibration (\fibref{item:underlyng right fib}). Given this intuition we call $\RFib_0: \Fun(\DDelta^{op},\RFib_{/\C}) \to \RFib_{/\C}$ the {\it underlying right fibration functor}, even if the fibrations is not Cartesian.
  Notice, the functor $\RFib_0$ has a left and right adjoint, which we can describe explicitly. 
  The left adjoint of $\RFib_0$ is the functor
  $$\Disc:\RFib_{/\C} \to \Fun(\DDelta^{op},\RFib_{/\C})$$
  that takes a right fibration to the constant simplicial object. The naming should remind us that it is a {\it discrete simplicial object}.
  On the other hand, the right adjoint 
  \begin{equation} \label{eq:cech functor}
  	\Cech:\RFib_{/\C} \to \Fun(\DDelta^{op},\RFib_{/\C})	
  \end{equation}
  takes a right fibration $\R\to \C$ to the Reedy right fibration $\Cech(\R) \to \C$ defined as $\RFib_n(\Cech(\R)) = \R^n$, where we are taking the product in $\RFib_{/\C}$, and where the simplicial operators are given by projection and diagonal. The construction resembles the construction of the {\it {\v C}ech nerve} \cite{alexandroff1928cechnerve} and should make us think of a category in which there is a unique morphism between any two objects. We will need a more detailed understanding of the $\Cech$ construction.
  
   \begin{lemone} \label{lemma:main theorem constant fib}
  	Let $\R \to \C$ be a right fibration. Then $\Cech(\R)$ is a Segal Cartesian fibration with $\Cech(\R)_{hoequiv} \simeq \RFib_1(\Cech(\R)) = \R \times_\C\R$.
  	Moreover, if $\R$ is represented by $B$, then $\Cech(\R)$ is represented by a Segal object $\Cech(B)$ given level-wise by $\Cech(B)_n = B^{n+1}$.
  \end{lemone}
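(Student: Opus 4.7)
The plan is to leverage the explicit description of $\Cech(\R)$ provided by the adjunction $\RFib_0 \dashv \Cech$. Since $[0]$ is terminal in $\DDelta$, the standard right Kan extension formula gives $\Cech(\R)_n \simeq \R^{n+1}$, where $\R^{n+1}$ denotes the $(n+1)$-fold product in $\RFib_{/\C}$, namely the $(n+1)$-fold fiber product over $\C$. The simplicial operators are the projections (face maps) and diagonals (degeneracies) dictated by the corresponding maps of hom-sets in $\DDelta^{op}$. From this explicit description all three claims in the lemma follow by direct computation.

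For the Segal condition, I would observe that the iterated pullback $\Cech(\R)_1 \times_{\Cech(\R)_0} \cdots \times_{\Cech(\R)_0} \Cech(\R)_1$ (with $n$ copies of $\Cech(\R)_1 = \R \times_\C \R$, glued along their shared $\R = \Cech(\R)_0$ factors) reassembles to the $(n+1)$-fold fiber product $\R^{n+1} = \Cech(\R)_n$, and the canonical comparison map is the resulting natural equivalence. For the identification $\Cech(\R)_{hoequiv} \simeq \R \times_\C \R$, I would unpack the pullback square \ref{eq:r hoequiv}. With $\Cech(\R)_3 = \R^4$, the composite face map $d_1 d_0$ sends a tuple $(x_0, x_1, x_2, x_3)$ to $(x_1, x_3)$ and $d_1 d_3$ sends it to $(x_0, x_2)$, while $s_0 \times s_0$ sends $(a, b)$ to $((a, a), (b, b))$. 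The pullback therefore enforces $x_0 = x_2$ and $x_1 = x_3$, identifying $\Cech(\R)_{hoequiv}$ with $\R \times_\C \R = \RFib_1(\Cech(\R))$.

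For the representability statement, I would assume $\R \simeq \C_{/B}$ and invoke the fact that fiber products over $\C$ of representable right fibrations are representable with representing object given by the product in $\C$ (\fibref{item:rep right fib limit}). This yields $\RFib_n(\Cech(\C_{/B})) \simeq \C_{/B^{n+1}}$ at each level. By the Yoneda lemma for Reedy right fibrations (\cref{prop:yoneda reedy right}), these level-wise equivalences assemble to an equivalence of Reedy right fibrations $\Cech(\C_{/B}) \simeq \C_{/\Cech(B)_\bullet}$, where $\Cech(B)$ is the simplicial object with $\Cech(B)_n = B^{n+1}$ and with simplicial operators the corresponding projections and diagonals in $\C$. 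The Segal property of $\Cech(B)$ is then transported from the Segal property of $\Cech(\C_{/B})$ via \cref{prop:rep seg fib}.

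I do not anticipate a serious obstacle; the argument is essentially bookkeeping around the explicit Čech-nerve formula. The most delicate step is correctly identifying the composite face maps appearing in the pullback defining $(-)_{hoequiv}$, but this matches the $1$-categorical combinatorics already worked out in \cref{rem:simp diagram}.
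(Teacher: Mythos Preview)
Your proposal is correct and follows essentially the same route as the paper: both verify the Segal condition directly from the explicit formula $\Cech(\R)_n \simeq \R^{n+1}$, compute the composite face maps $d_1d_0$ and $d_1d_3$ on $\R^4$ to identify $\Cech(\R)_{hoequiv}$ with $\R \times_\C \R$, and obtain representability level-wise via \fibref{item:rep right fib limit}. The only cosmetic difference is that the paper phrases the $hoequiv$ computation by observing that $(d_1d_0,d_1d_3)$ is a coordinate permutation of $\R^4$ (hence an equivalence) and then pulls back along $\Delta \times \Delta$, whereas you solve the pullback conditions directly; these are the same calculation.
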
 
  
  \begin{proof}
  	The first part is a direct computation. Using an induction argument it suffices to confirm the Segal condition for $\RFib_2(\Cech(\R))$. By definition $d_2:\RFib_2(\Cech(\R)) = \R \times_\C\R\times_\C \R \to \R \times_\C \R=\RFib_1(\Cech(\R))$ is given by projecting onto the first two coordinates. Similarly, $d_2:\RFib_2(\Cech(\R)) = \R \times_\C\R\times_\C \R \to \R \times_\C \R=\RFib_1(\Cech(\R))$ projects unto the second two coordinates. It now follows directly that the following square is a pullback
  	\begin{center}
  		\begin{tikzcd}
  			\R \times_\C\R\times_\C \R  \arrow[r, "d_0"] \arrow[d, "d_0"] & \R \times_\C \R \arrow[d, "\pi_1=d_1"] \\
  			\R \times_\C \R \arrow[r, "\pi_2=d_0"] & \R 
  		\end{tikzcd}
  	\end{center}
  	confirming the Segal condition. 
  	
  	We move on to compute $\Cech(\R)_{hoequiv}$. By direct computation the map $d_1d_3: \RFib_3(\Cech(\R)) \to \RFib_1(\Cech(\R))$ is given by projecting onto the first and third coordinate. Similarly, the map $d_1d_0:\RFib_3(\Cech(\R)) \to \RFib_1(\Cech(\R))$ is given by projecting on the second and fourth coordinate. Hence, the map $(d_1d_0,d_1d_3): \R \times_\C \R \times_\C \R \times_\C \R \to  \R \times_\C \R \times_\C \R \times_\C \R$ is just a reshuffling of the coordinates and thus an equivalence. Pulling back this equivalence along the map 
  	$\Delta \times \Delta: \R \times_\C\R \to  \R \times_\C \R \times_\C \R \times_\C \R $ gives us the desired result.
  	
  	Finally, if $\R$ is represented by $B$ then we have an equivalence $\R \simeq \C_{/B}$ (\fibref{item:rep right fib final}) and so we get 
  	$$\RFib_n(\Cech(\R)) \simeq \RFib_n(\Cech(\C_{/B})) \simeq \C_{/B} \times_\C ... \times_\C \C_{/B} \simeq \C_{/B \times ... \times B},$$
  	where the last step follows from \fibref{item:rep right fib limit}. 
  \end{proof}
 
    We now have developed all the necessary background to prove the desired lemmas. Recall $\sO_\C\to \C$ is the target Cartesian fibration and $\sO^p_\C \to \C$ is the sub-Cartesian fibration consisting of morphisms that can be obtained as the pullback of $p$.
 
 \begin{lemone} \label{lemma:main theorem fib exists}
 	There exists a Reedy right fibration $\sO_\C^{p/B}$ with a map of Reedy right fibrations $\sO_\C^{p/B} \to \sO_\C$ which has essential image $\sO_\C^p$. Moreover, the underlying right fibration of $\sO^{p/B}_\C$ is $\C_{/B}$.
 \end{lemone}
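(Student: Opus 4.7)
The plan is to apply the fibrational nerve $\RFib \colon \Cart_{/\C} \to \Fun(\DDelta^{op}, \RFib_{/\C})$ from \ref{eq:rfib} to the inclusion $\sO^p_\C \hookrightarrow \sO_\C$ and then carve out $\sO^{p/B}_\C$ as a pullback of Reedy right fibrations. Because $p \colon E \to B$ is itself an object of $\sO^p_\C$ lying over $B$, the Yoneda lemma (\fibref{item:rep right fib}) supplies a map of right fibrations $\C_{/B} \to \RFib_0(\sO^p_\C)$ sending $(f\colon c \to B)$ to $f^*p$; by the very definition of $\sO^p_\C$ this map is essentially surjective.

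I would then define
\begin{equation*}
    \sO^{p/B}_\C \;:=\; \RFib(\sO^p_\C) \, \times_{\Cech(\RFib_0(\sO^p_\C))} \, \Cech(\C_{/B})
\end{equation*}
as a pullback in $\Fun(\DDelta^{op}, \RFib_{/\C})$, where the first arrow is the unit of the adjunction $\RFib_0 \dashv \Cech$ (\ref{eq:cech functor}) and the second is obtained by applying $\Cech$ to the map above. Using $\Cech(R)_n = R^{n+1}$, this pullback unfolds at level $n$ to $\RFib_n(\sO^p_\C) \times_{(\RFib_0(\sO^p_\C))^{n+1}} (\C_{/B})^{n+1}$, with the first factor mapping in via the $(n+1)$-fold vertex map. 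Since right fibrations are stable under pullback and composition (\fibref{item:cart composition}), the result is a Reedy right fibration; at level zero the pullback collapses to $\RFib_0(\sO^p_\C) \times_{\RFib_0(\sO^p_\C)} \C_{/B} \simeq \C_{/B}$, providing the required identification of the underlying right fibration.

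The desired map of Reedy right fibrations is the composition $\sO^{p/B}_\C \to \RFib(\sO^p_\C) \hookrightarrow \RFib(\sO_\C)$ of the canonical pullback projection with the image under $\RFib$ of $\sO^p_\C \hookrightarrow \sO_\C$. Identifying its essential image with $\sO^p_\C$ reduces to showing that $\sO^{p/B}_\C \to \RFib(\sO^p_\C)$ is essentially surjective at each simplicial level, which follows from essential surjectivity of $\C_{/B} \to \RFib_0(\sO^p_\C)$ together with the fact that essential surjectivity of right fibrations is preserved under finite products and pullbacks. The one subtle point — and the reason for the apparent detour through $\Cech$ — is that no single vertex map $\RFib_n(\sO^p_\C) \to \RFib_0(\sO^p_\C)$ is simplicially natural; pulling back against $\Cech(\RFib_0(\sO^p_\C))$, which records all vertices at once, is precisely the device needed to produce a simplicial object with the prescribed level zero.
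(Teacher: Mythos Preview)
Your construction is essentially the paper's: both form $\sO^{p/B}_\C$ as a pullback of Reedy right fibrations against $\Cech(\C_{/B})$ using the unit map to $\Cech\circ\RFib_0$, and both read off $\RFib_0(\sO^{p/B}_\C)\simeq\C_{/B}$ from the level-zero collapse. The only cosmetic difference is that the paper pulls back from $\sO_\C$ over $\Cech(\Oall_\C)$, whereas you pull back from the full sub-fibration $\sO^p_\C$ over $\Cech(\RFib_0(\sO^p_\C))$; since $\sO^p_\C\hookrightarrow\sO_\C$ is level-wise the pullback of $\RFib_0(\sO^p_\C)^{n+1}\hookrightarrow(\Oall_\C)^{n+1}$ and the map from $\Cech(\C_{/B})$ factors through the former, the two pullbacks agree. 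Your choice makes the essential-image claim marginally cleaner (it becomes a surjectivity statement rather than an image identification), and your appeal to stability of essential surjectivity under pullback is justified because $\RFib_{/\C}\simeq\Fun(\C^{op},\widehat{\s})$ is an $\infty$-topos, where effective epimorphisms are pullback-stable. Incidentally, you are right to call the map $\sO^p_\C\to\Cech\RFib_0(\sO^p_\C)$ the \emph{unit} of $\RFib_0\dashv\Cech$; the paper's ``counit'' is a slip.
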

 
\begin{proof}
	Recall the map $p$ gives us a map of right fibrations $\C_{/B} \to \Oall_\C$ (\fibref{item:rep right fib}), which gives us a map on codiscrete objects $\Cech(\C_{/B}) \to \Cech\RFib_0(\sO_\C)$. Now, define the Reedy right fibration $\sO^{p/B}_\C$ via following pullback square of right fibrations over $\C$
	\begin{center}
		\begin{tikzcd}
			\sO^{p/B}_\C \arrow[r] \arrow[d] & \sO_\C \arrow[d, "c"] \\ 
			\Cech(\C_{/B}) \arrow[r] & \Cech\RFib_0(\sO_\C)
		\end{tikzcd}.
	\end{center}
    Here $c$ is the counit of the adjunction (described in \ref{eq:cech functor}).
    An object is in the image if it can be obtained as a pullback of $p$, which is precisely how we define $\sO_\C^p$, meaning $\sO_\C^p$ is precisely the image of the map $\sO^{p/B}_\C \to \sO_\C$. 
    Finally, notice $\RFib_0(c): \RFib_0(\sO^p_\C) \to \RFib_0(\Cech\RFib_0(\sO^p_\C))$ is simply the identity (by the triangle identity) and so $\RFib_0(\sO^{p/B}_\C) \simeq \C_{/B}$. 
\end{proof}

We now want to use this Reedy right fibration to construct a Segal object. For that we need to prove that $\sO_\C^{p/B}$ is a Segal Cartesian fibration such that $\RFib_0(\sO_\C^{p/B})$ and $\RFib_1(\sO_\C^{p/B})$ are representable. 

\begin{lemone} \label{lemma:main theorem fib Segal}
	$\sO_\C^{p/B}$ is a Segal Cartesian fibration.
\end{lemone}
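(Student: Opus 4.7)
The plan is to realize $\sO_\C^{p/B}$ as a pullback of Reedy right fibrations each of which is already a Segal Cartesian fibration, and then to invoke the fact that the Segal condition is closed under pullbacks.

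First, I would observe that each corner of the pullback square defining $\sO_\C^{p/B}$ in the proof of \cref{lemma:main theorem fib exists} is already Segal Cartesian: the target fibration $\sO_\C \to \C$ is a Cartesian fibration, hence Segal Cartesian via the embedding (\ref{eq:rfib}), while the two codiscrete objects $\Cech(\C_{/B})$ and $\Cech(\Oall_\C)$ are Segal Cartesian by \cref{lemma:main theorem constant fib}.

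Next, I would verify that if $\mathscr{F} \to \mathscr{G} \leftarrow \mathscr{H}$ is a cospan of Segal Cartesian fibrations over $\C$, then the levelwise pullback $\mathscr{F} \times_{\mathscr{G}} \mathscr{H}$ is again Segal. Since limits in $\Fun(\DDelta^{op}, \RFib_{/\C})$ are computed levelwise, at each $n \geq 2$ we have $\RFib_n(\mathscr{F} \times_{\mathscr{G}} \mathscr{H}) \simeq \RFib_n(\mathscr{F}) \times_{\RFib_n(\mathscr{G})} \RFib_n(\mathscr{H})$; applying the Segal condition to each of $\mathscr{F}, \mathscr{G}, \mathscr{H}$ and commuting limits past limits rewrites this as the desired iterated fiber product over $\RFib_0(\mathscr{F} \times_{\mathscr{G}} \mathscr{H})$ of copies of $\RFib_1(\mathscr{F} \times_{\mathscr{G}} \mathscr{H})$. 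Applying this stability principle to the pullback defining $\sO_\C^{p/B}$ yields the lemma.

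There is no serious obstacle here: the argument is a formal diagram chase using closure of right fibrations under finite limits and the commutation of limits with limits. The substantive step in the overall strategy for \cref{the:main theorem}, namely identifying $\RFib_1(\sO_\C^{p/B})$ with the internal mapping object $\morphism{\pi_1^*E}{\pi_2^*E}_{B \times B}$ in order to obtain representability at level $1$, is reserved for the subsequent \cref{lemma:main theorem one rep}.
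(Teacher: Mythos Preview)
Your proposal is correct and follows essentially the same approach as the paper: both note that the three corners of the defining pullback square are Segal Cartesian (using that $\sO_\C$ is Cartesian and invoking \cref{lemma:main theorem constant fib} for the two $\Cech$ terms), and then conclude by observing that the Segal condition is a limit condition and hence preserved under pullbacks. Your version is slightly more explicit about the levelwise limit computation, but the argument is the same.
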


\begin{proof}
	By assumption $\sO_\C$ is a Cartesian fibration and so also a Segal Cartesian fibration. 
	Moreover, according to \cref{lemma:main theorem constant fib}, $\Cech(\C_{/B})$ and $\Cech(\RFib_0(\sO_\C))$ are Segal Cartesian fibrations.
	However, being a Segal Cartesian fibration is a limit condition. Hence, using the fact that limits commute with pullbacks implies that their pullback, $\sO_\C^{p/B}$, is also a Segal Cartesian fibration.
\end{proof} 

\begin{lemone} \label{lemma:main theorem completeness}
	$\sO_\C^{p/B}$ is a Cartesian fibration if and only if $\C_{/B} \to \Oall_\C$ is an inclusion. 
\end{lemone}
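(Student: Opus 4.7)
The plan is to compute $(\sO_\C^{p/B})_{hoequiv}$ explicitly and recognize that the completeness condition collapses precisely to the statement that $\C_{/B} \to \Oall_\C$ is an inclusion. Recall that by definition $\sO_\C^{p/B}$ is Cartesian iff it is complete (\cref{lemma:main theorem fib Segal} says we already have a Segal Cartesian fibration), meaning the canonical map $\RFib_0(\sO_\C^{p/B}) \to (\sO_\C^{p/B})_{hoequiv}$ over $\RFib_0(\sO_\C^{p/B}) \times_\C \RFib_0(\sO_\C^{p/B})$ is an equivalence of right fibrations; and by \cref{lemma:main theorem fib exists} the underlying right fibration is $\C_{/B}$.

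First I would exploit the fact that $(-)_{hoequiv}$ is defined by the pullback in \ref{eq:r hoequiv} and that limits commute with limits. Applying this to the defining pullback square of $\sO_\C^{p/B}$ from \cref{lemma:main theorem fib exists} gives
$$(\sO_\C^{p/B})_{hoequiv} \simeq (\sO_\C)_{hoequiv} \times_{(\Cech\,\Oall_\C)_{hoequiv}} (\Cech(\C_{/B}))_{hoequiv}.$$

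Second, I would simplify each factor. Since $\sO_\C \to \C$ is already a Cartesian fibration, completeness yields $(\sO_\C)_{hoequiv} \simeq \RFib_0(\sO_\C) = \Oall_\C$, with the structure map to the fiber product being the diagonal. From \cref{lemma:main theorem constant fib} we have $\Cech(\R)_{hoequiv} \simeq \R \times_\C \R$ for any right fibration $\R$, applied to $\R = \Oall_\C$ and $\R = \C_{/B}$. Substituting and using that the outer map $\C_{/B} \times_\C \C_{/B} \to \Oall_\C \times_\C \Oall_\C$ is simply the product of the map induced by $p$, a routine manipulation of iterated pullbacks gives
$$(\sO_\C^{p/B})_{hoequiv} \simeq \C_{/B} \times_{\Oall_\C} \C_{/B}.$$

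Finally, tracing the canonical map $\C_{/B} = \RFib_0(\sO_\C^{p/B}) \to (\sO_\C^{p/B})_{hoequiv}$ through these identifications, I expect to recognize it as the diagonal $\C_{/B} \to \C_{/B} \times_{\Oall_\C} \C_{/B}$; this is forced because both factors of the target agree with the source after the projections $\RFib_0$, and the completeness map is by construction a section of both projections. Applying characterization (III) in \fibref{item:over cat trunc} to the map of right fibrations $\C_{/B} \to \Oall_\C$, the diagonal is an equivalence iff this map is $(-1)$-truncated, that is, an inclusion. The main obstacle is the last step: carefully identifying the universal map from $\RFib_0$ to $(-)_{hoequiv}$ under the chain of pullback simplifications, and ensuring the appropriate notion of ``inclusion'' of right fibrations is the one meant by \fibref{item:over cat trunc}(III); everything else is formal manipulation of the defining pullback squares.
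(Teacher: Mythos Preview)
Your proposal is correct and follows essentially the same route as the paper: both compute $(\sO_\C^{p/B})_{hoequiv}$ by commuting the defining pullback of $\sO_\C^{p/B}$ with the $(-)_{hoequiv}$ pullback, then plug in $(\sO_\C)_{hoequiv}\simeq\Oall_\C$ and \cref{lemma:main theorem constant fib} for the \v{C}ech terms. The only cosmetic difference is that the paper stops at the pullback square over $\Oall_\C \times_\C \Oall_\C$ and invokes characterization (V) of \fibref{item:over cat trunc}, whereas you take one more step to rewrite that pullback as $\C_{/B}\times_{\Oall_\C}\C_{/B}$ and invoke characterization (III); these are equivalent formulations of the same mono condition.
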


\begin{proof}
	First of all $\sO_\C$ is a Cartesian fibration, hence $(\sO_\C)_{hoequiv} \simeq \Oall_\C$. Moreover, by \cref{lemma:main theorem constant fib}, 
	$\Cech(\C_{/B})_{hoequiv} = \C_{/B} \times_\C \C_{/B}$ and $\Cech(\Oall_\C) = \Oall_\C \times_\C \Oall_\C$. Hence, we have following pullback square of right fibrations of weak equivalences defined in \ref{eq:r hoequiv}
	\begin{center}
		\begin{tikzcd}
			(\sO^{p/B}_\C)_{hoequiv} \arrow[r] \arrow[d] & \Oall_\C \arrow[d, "\Delta"] \\
			\C_{/B} \times_\C \C_{/B} \arrow[r] & \Oall_\C \times_\C \Oall_\C 
		\end{tikzcd}
	\end{center} 
    using the fact that pullbacks commute with pullbacks. 
    This means that $\sO^{p/B}_\C$ is complete if and only if the following is a pullback square of right fibrations
	\begin{center}
		\begin{tikzcd}
			\C_{/B} \arrow[r] \arrow[d, "\Delta"] & \Oall_\C \arrow[d, "\Delta"] \\
			\C_{/B} \times_\C \C_{/B} \arrow[r] & \Oall_\C \times_\C \Oall_\C 
		\end{tikzcd}
	\end{center}
	which is equivalent to $\C_{/B} \to \Oall_\C$ being an inclusion (\fibref{item:over cat trunc}).
\end{proof}

 Now, we need to confirm the representability results. The representability of the $0$-level is immediate and so we will focus on the next level , meaning we want to prove that $\RFib_1(\sO_\C^{p/B})$ is representable. Before we do so, we need a better understanding of the objects and morphisms in the $\infty$-category $\RFib_1(\sO_\C^{p/B})$. Based on \cref{lemma:main theorem fib exists}, $\RFib_1(\sO_\C^{p/B})$ is defined as the following pullback of $\infty$-categories 
 \begin{equation} \label{eq:pb}
 	\begin{tikzcd}
 		\RFib_1(\sO_\C^{p/B}) \arrow[r] \arrow[d] & \RFib_1(\sO_\C) \arrow[d] \\
 		\C_{/B} \times_\C \C_{/B} \arrow[r] & \Oall_\C \times_\C\Oall_\C 
 	\end{tikzcd}
 \end{equation}
  Hence, we can explicitly describe an object in $\RFib_1(\sO_\C^{p/B})$ as a diagram of the form 
  \begin{equation} \label{eq:obj opb}
  	\begin{tikzcd}
  		E \arrow[d, "p"] & f^*E \arrow[r, "h"] \arrow[d, "f^*p"] \arrow[l] \arrow[dl, phantom, "\urcorner", very near start] & g^*E \arrow[d, "g^*p"] \arrow[r] \arrow[dr, phantom, "\ulcorner", very near start] & E \arrow[d, "p"] \\ 
  		B & X \arrow[l, "f"'] \arrow[r, "\id_X"] & X \arrow[r, "g"] & B 
    \end{tikzcd}
  \end{equation}
  with morphisms given by pullbacks of such diagrams. We denote such an object by the tuple $(X,f,g,h)$. A morphism $(X,f,g,h)$ to $(Y,i,j,k)$ correspond to diagrams of the form
 \begin{equation}\label{eq:mor opb}
 	\begin{tikzcd}
 		& E \arrow[dd, "p" near start] \arrow[dl, "\id_E"'] & & f^*E \arrow[rr, "h"] \arrow[dd, "f^*p" near start] \arrow[ll] \arrow[dl] \arrow[dl, phantom, "\urcorner", very near start] & & g^*E \arrow[dd, "g^*p" near start] \arrow[rr] \arrow[dl]  \arrow[dr, phantom, "\ulcorner", very near start] & & E \arrow[dd, "p" near start] \arrow[dl, "\id_E"]\\
 		E \arrow[dd, "p"] & & i^*E \arrow[rr, "k" near end] \arrow[dd, "i^*p" near start] \arrow[ll] \arrow[dl, phantom, "\urcorner", very near start] & & j^*E \arrow[dd, "j^*p" near start] \arrow[rr]  \arrow[dr, phantom, "\ulcorner", very near start] & & E \arrow[dd, "p" near start] \\
 		& B\arrow[dl, "\id_B"' near start] & & X \arrow[ll, "f"' near end] \arrow[rr, "\id_X" near start] \arrow[dl, "p"]& & X \arrow[rr, "g" near start]\arrow[dl, "p"] & & B \arrow[dl, "\id_B"]\\ 
 		B & & Y \arrow[ll, "i"'] \arrow[rr, "\id_Y"] & & Y \arrow[rr, "j"] & & B 
 	\end{tikzcd},
 \end{equation}
  where the vertical squares above $p$ and $\id_B$ are pullback diagrams. The source map (target map) is given by restricting to the back diagram (front diagram).
  
 \begin{lemone} \label{lemma:main theorem one rep}
 	The right fibration $\RFib_1(\sO_\C^{p/B})$ is represented by $\morphism{(\pi_1)^*E}{(\pi_2)^*E}_{B \times B}$.
 \end{lemone}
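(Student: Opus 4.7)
The plan is to apply the Yoneda lemma for right fibrations \fibref{item:rep right fib}: I will build a comparison map from $\C_{/\morphism{\pi_1^*E}{\pi_2^*E}_{B \times B}}$ to $\RFib_1(\sO_\C^{p/B})$ induced by a canonical ``universal'' element, then verify it is a fiberwise equivalence of right fibrations over $\C$. First I would identify the fibers: applying $\RFib_1$ to the defining pullback square (\ref{eq:pb}) and using \cref{lemma:main theorem constant fib} for the {\v C}ech terms, the fiber over $X \in \C$ sits in a pullback of spaces
\[
\RFib_1(\sO_\C^{p/B})_X \ \simeq\ \RFib_1(\sO_\C)_X \times_{(\Oall_\C)_X \times (\Oall_\C)_X} \bigl(\Map_\C(X,B) \times \Map_\C(X,B)\bigr).
\]
By the description of $\RFib$ recalled in \cref{rem:model cart}, applied fiberwise, $\RFib_1(\sO_\C)_X$ is the $\infty$-groupoid of arrows in the fiber $(\sO_\C)_X = \C_{/X}$, and projection to $(\Oall_\C)_X \times (\Oall_\C)_X = (\C_{/X})^\simeq \times (\C_{/X})^\simeq$ reads off source and target. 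Unwinding produces a natural equivalence
\[
\RFib_1(\sO_\C^{p/B})_X \ \simeq\ \coprod_{(f,g) \in \Map_\C(X,B)^{\times 2}} \Map_{/X}\bigl(f^*E,\, g^*E\bigr).
\]

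Next I would invoke the hypothesis on $p$ to produce the internal hom. The existence of a right adjoint $\Pi$ to pullback $(E \times B)^*: \C_{/B \times B} \to \C_{/E \times B}$, combined with the postcomposition adjunction $\Sigma \dashv (E \times B)^*$, exhibits $\morphism{\pi_1^*E}{\pi_2^*E}_{B \times B} := \Pi\bigl((E \times B)^*(\pi_2^*E)\bigr)$ as the object of $\C_{/B \times B}$ representing morphisms $\pi_1^*E \to \pi_2^*E$ over $B \times B$. Specializing to $F = ((f,g): X \to B \times B)$ and summing over $(f,g)$ gives a natural equivalence
\[
\Map_\C\bigl(X,\, \morphism{\pi_1^*E}{\pi_2^*E}_{B \times B}\bigr) \ \simeq\ \coprod_{(f,g)} \Map_{/X}(f^*E, g^*E),
\]
matching the fiber description above. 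Transporting the identity of $\morphism{\pi_1^*E}{\pi_2^*E}_{B \times B}$ through this adjunction yields an ``evaluation arrow'' between the two pullbacks of $p$ along $\morphism{\pi_1^*E}{\pi_2^*E}_{B \times B} \to B \times B \xrightarrow{\pi_i} B$; this evaluation arrow is canonically an object of $\RFib_1(\sO_\C^{p/B})$ lying over $\morphism{\pi_1^*E}{\pi_2^*E}_{B \times B}$, and by \fibref{item:rep right fib} it induces a map of right fibrations $\C_{/\morphism{\pi_1^*E}{\pi_2^*E}_{B \times B}} \to \RFib_1(\sO_\C^{p/B})$ which on each fiber over $X$ reproduces the equivalence just constructed, hence is an equivalence.

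The main obstacle will be the fiber identification: the diagrammatic descriptions (\ref{eq:obj opb}) and (\ref{eq:mor opb}) supply the intuition, but turning them into a precise equivalence of spaces requires carefully tracking the pullback (\ref{eq:pb}) through $\RFib_1$, exploiting that $\RFib$ preserves limits and that its fiberwise behavior matches the $t^!$ construction recalled in \cref{rem:model cart}. Once that is in place, everything else is either adjunction bookkeeping (Step~2) or a direct application of the Yoneda lemma (Step~3).
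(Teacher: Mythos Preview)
Your proposal is correct and takes essentially the same approach as the paper. The paper constructs the same universal element (your ``evaluation arrow'', written there as $(\cM,s,t,\comp)$) and proves it is a \emph{final object} in $\RFib_1(\sO_\C^{p/B})$ via exactly the fiberwise adjunction calculation over $\C_{/B\times B}$ that you outline; the only cosmetic difference is that the paper packages the conclusion using \fibref{item:rep right fib final} (representability via final object) rather than \fibref{item:rep right fib} (Yoneda comparison map).
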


 \begin{proof}
 	We will prove that the $\infty$-category $\RFib_1(\sO_\C^{p/B})$ has a final object. The result then follows from \fibref{item:rep right fib final}. Let us first construct the object we want to prove is the final object, meaning we will construct a diagram of the form given in \ref{eq:obj opb}. Let $\pi_1,\pi_2: B \times B \to B$ and define $\cM = \morphism{(\pi_1)^*E}{(\pi_2)^*E}_{B \times B}$. By construction there are morphisms $(s,t): \cM \to B \times B$. We want to construct a morphism $s^*E \to t^*E$ over $\cM$, which requires us first computing these pullbacks. We have 
 	$$s^*E = E \times_B \morphism{E \times B}{B \times E}_{B \times B} \simeq (E \times B) \times_{B \times B} \morphism{E \times B}{B \times E}_{B \times B}$$
 	$$t^*E = E \times_B \morphism{E \times B}{B \times E}_{B \times B} \simeq (B \times E) \times_{B \times B} \morphism{E \times B}{B \times E}_{B \times B}$$
 	We can now define $\comp: s^*E \to t^*E$ by 
 	$$(c,\pi_2): (E \times B) \times_{B \times B} \morphism{E \times B}{B \times E}_{B \times B} \to (B \times E) \times_{B \times B} \morphism{E \times B}{B \times E}_{B \times B}$$
 	where $c: (E \times B) \times_{B \times B} \morphism{E \times B}{B \times E}_{B \times B} \to B \times E$ is the counit map of the adjunction.
 	Hence, we have constructed the object $(\cM,s,t,\comp)$
 	\begin{equation} \label{eq:final object} 
 		\begin{tikzcd}
 			E \arrow[d, "p"] & s^*E \arrow[r, "\comp"] \arrow[d, "s^*p"] \arrow[l]  \arrow[dl, phantom, "\urcorner", very near start] & t^*E \arrow[d, "t^*p"] \arrow[r]  \arrow[dr, phantom, "\ulcorner", very near start] & E \arrow[d, "p"] \\ 
 			B & \cM \arrow[l, "f"'] \arrow[r, "\id_X"] & \cM \arrow[r, "g"] & B 
 		\end{tikzcd}.
 	\end{equation}
    We will now prove it is final, by proving there is a (homotopically) unique pullback square from an arbitrary object $(X,f,g,h)$ depicted in \ref{eq:obj opb} to the object $(\cM,s,t,\comp)$ given in \ref{eq:final object}.
    
    Let $\cdot \leftarrow \cdot \rightarrow \cdot$ denote the span $\infty$-category with $3$ objects and $2$ morphisms.
    By the universal property of pullbacks we have the equivalence 
    $$\RFib_1((\sO^{p/B})_{/(\cM,s,t,\comp)})^{\simeq} \overset{\simeq}{\longrightarrow} ((\C^{\cdot \leftarrow \cdot \rightarrow \cdot})_{/B \leftarrow \cM \rightarrow B})^{\simeq},$$
    which is given by restricting the diagram \ref{eq:mor opb} to its bottom part.
    Moreover, we have the equivalence 
    $$((\C^{\cdot \leftarrow \cdot \rightarrow \cdot})_{/B \leftarrow \cM \rightarrow B})^{\simeq} \to (\C_{/\cM})^\simeq$$
    as $\cM$ is the limit of the diagram $B \leftarrow \cM \rightarrow B$ (\fibref{item:rep right fib limit}). Note, the inverse of this equivalence 
    $$(\C_{/\cM})^\simeq \to \RFib_1(\sO^{p/B}_{/(\cM,s,t,\comp)})^{\simeq}$$
    is given by pulling back the diagram $(\cM,s,t,\comp)$ along the map $p: X \to \cM$.
    Hence, the composition map 
    $$\Pb: (\C_{/\cM})^\simeq \to \RFib_1(\sO^{p/B})^{\simeq}$$
    takes a morphism $p: X \to \cM$ to the diagram $(X,p^*s,p^*t,p^*\comp)$. We want to prove this composition is an equivalence. 
    First of all we have a commutative triangle
    \begin{center}
    	\begin{tikzcd}
    		(\C_{/\cM})^\simeq \arrow[rr] \arrow[dr] & &  \RFib_1(\sO^{p/B})^{\simeq} \arrow[dl] \\
    		& (\C_{/B\times B})^\simeq
    	\end{tikzcd}.
    \end{center}
    Hence, it suffices to prove the fiber $\Pb_{(f,g)}$ is an equivalence of fibers over an object $(f,g): X \to B \times B$ in $(\C_{/B\times B})^\simeq$. By definition the fiber of the left hand map is $\Map_{/B \times B}((f,g),(s,t))$. Moreover, given the pullback square \ref{eq:pb}, the fiber of the right hand side is also the fiber of $\RFib_1(\sO_\C) \to \Oall_\C \times_\C \Oall_\C$ over $(f,g)$, which is precisely $\Map_{/X}(f^*E,g^*E)$. We now have the maps 
    \begin{align*}
    	\Map_{/B \times B}(X, \morphism{E \times B}{B \times E}_{B \times B}) & \to \Map_{/B \times B}(E \times B \times_{B\times B} X, E \times B \times_{B\times B} \morphism{E \times B}{B \times E}_{B \times B}) \\ 
    	& \to \Map_{/B\times B}(E \times B \times_{B\times B} X,B \times E),
    \end{align*}
    where the first map pulls back along $E \times B \to B \times B$ and the second post-composes with the counit map $E \times B \times_{B\times B} \morphism{E \times B}{B \times E}_{B \times B} \to B \times E$ and the composition is an equivalence as we have an adjunction.
    
    Next, we notice that the map $E \times B \times_{B\times B} X \to B \times B$ is given via post-composing the projection $E \times B \times_{B\times B} X \to X$ with $(f,g): X \to B \times B$ have the equivalence 
    $$ \Map_{/B\times B}(E \times B \times_{B\times B} X,B \times E) \to \Map_{/X}(E \times B \times_{B \times B}X, X \times_{B \times B} B \times E)$$
    given by pulling back along $X$. 
    
    Notice we have the two pullback squares 
    \begin{center}
    	\begin{tikzcd}
    	 X \times_{B \times B}E \times B \arrow[r] \arrow[d] & E \times B \arrow[r] \arrow[d, "p \times \id"] & E \arrow[d, "p"] \\
    	 X \arrow[r, "(f \comma g)"] \arrow[rr, "f"', bend right=20]& B \times B \arrow[r, "\pi_1"] & B 
    	\end{tikzcd}
    \end{center}
	which paste to one pullback diagram with bottom map $\pi_1(f,g) =f$ and so we have $f^*E \simeq X \times_{B \times B}E \times B $. Similarly we have $g^*E \simeq X \times_{B \times B}B \times E$. Hence we get an equivalence  
	$$\Map_{/X}(E \times B \times_{B \times B}X, X \times_{B \times B} B \times E) \to \Map_{/X}(f^*E,g^*E) .$$
	Combining the steps, we constructed the desired equivalence 
	$$\Pb_{(f,g)}:\Map_{/B \times B}((f,g),(s,t)) \to \Map_{/X}(f^*E,g^*E)$$
	explicitly constructed via pulling back the diagram $(\cM,s,t,\comp)$ via the map $X \to \cM$ proving that the map $\Pb$ is an equivalence.
	
    Finally, letting $\mathbbe{1}$ denote the final $\infty$-category, we have following pullback diagram of $\infty$-categories 
    \begin{center}
    	\begin{tikzcd}
    		\Map_{\RFib_1(\sO^{p/B})}((X,f,g,h),(\cM,s,t,\comp)) \arrow[r] \arrow[d] & \RFib_1(\sO^{p/B}_{/(\cM,s,t,\comp)})^{\simeq} \arrow[d, "\simeq"] \\
    		\mathbbe{1} \arrow[r ,"(X \comma f \comma g \comma h)"] & \RFib_1(\sO^{p/B})^{\simeq}
    	\end{tikzcd}.
    \end{center}
	We showed above that the right vertical map is an equivalence, hence the left hand vertical map is an equivalence as well, which proves the mapping space is contractible and so $(\cM,s,t,\comp)$ is final.
 \end{proof}

\begin{remone} \label{rem:like strict}
	The steps of the proof should remind us of the construction we made in \cref{sec:strict cat}. In other words, the whole setup of representable Cartesian fibrations allowed us to successively strictify the homotopy coherence, until we reduced it to the construction of one object which is the one object that also played a key role in the case of strict categories, as witnessed in \ref{eq:m}.
\end{remone}

\section{Univalence and Topos Theory} \label{sec:univ and topos}
 In this section we want try to classify all univalent morphisms in a locally Cartesian closed $\infty$-category and observe how our desire to classify them consistently moves us from  classical categories to higher categories and from categories to toposes. 
 Let us start by using the fact that $\infty$-categories have truncation to break down the question by truncation level. Truncations have been studied extensively in presentable $\infty$-categories \cite{lurie2009htt,rezk2010toposes,abfj2017blakersmassey,schlankyanovski2019eckmannhilton}, however, as we are working with finitely complete $\infty$-categories we will review one important result we need.
 
 \begin{lemone} \label{lemma:ntrunc}
 	Let $\C$ be a finitely complete $\infty$-category.
 	Let $p: E \to B$ be a morphism such that $B$ is $n$-truncated and $p$ is $k$-truncated, where $k \leq n$. Then $E$ is $n$-truncated as well. In particular, if $n > -2$, then a subject of an $n$-truncated object is also $n$-truncated
 \end{lemone}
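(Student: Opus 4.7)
The plan is to establish the lemma by isolating two standard closure properties of the class of $n$-truncated morphisms and then combining them. Concretely, I will prove (i) $n$-truncated morphisms are closed under pullback, (ii) $k$-truncated implies $n$-truncated whenever $k \leq n$, and (iii) $n$-truncated morphisms are closed under composition. Each of these is proved by an induction on $n$, using only the defining recursion ``$f$ is $n$-truncated iff $\Delta_f$ is $(n-1)$-truncated'' together with the base case that $(-2)$-truncated means an equivalence.

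First I would note that (i) is essentially immediate by induction: if the diagonal $\Delta_f$ is $(n-1)$-truncated and we pull $f$ back to $f'$, then $\Delta_{f'}$ is obtained as a pullback of $\Delta_f$, so the inductive hypothesis applies. Next I would prove (ii) by an induction reducing to the step ``$n$-truncated implies $(n+1)$-truncated'': in the base case $n=-2$, an equivalence has an equivalence as its diagonal and is thus $(-1)$-truncated; in the inductive step, if $\Delta_f$ is $(n-1)$-truncated then by induction it is $n$-truncated, so $f$ is $(n+1)$-truncated.

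The crucial step is (iii), composition closure, which I would again prove by induction on $n$. Given $n$-truncated morphisms $f\colon X\to Y$ and $g\colon Y\to Z$, consider the factorization of the diagonal of the composite
\begin{equation*}
X \xrightarrow{\ \Delta_f\ } X\times_Y X \xrightarrow{\ \phi\ } X\times_Z X,
\end{equation*}
where $\phi$ is induced by the pair of projections and the equality of their $g\circ f$-images. A short diagram chase identifies $\phi$ as the pullback of $\Delta_g\colon Y\to Y\times_Z Y$ along the map $f\times f\colon X\times X \to Y\times Y$ (after appropriately interpreting fiber products). By hypothesis $\Delta_f$ and $\Delta_g$ are $(n-1)$-truncated; by (i), $\phi$ is $(n-1)$-truncated; by the inductive hypothesis of (iii), $\phi\circ \Delta_f$ is $(n-1)$-truncated. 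Hence $\Delta_{gf}$ is $(n-1)$-truncated, so $gf$ is $n$-truncated. I expect this identification of $\phi$ as a pullback to be the main obstacle, since it is the one place where the proof departs from a direct diagram chase on the diagonals.

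Assembling the pieces: since $k\leq n$, (ii) upgrades $p$ to an $n$-truncated morphism. Composing with the $n$-truncated map $B\to 1_\C$ and applying (iii) yields that $E\to 1_\C$ is $n$-truncated, i.e.\ $E$ is $n$-truncated. For the ``in particular'' clause, a subobject of $B$ is a $(-1)$-truncated morphism $E\to B$ (\fibref{item:over cat trunc}); when $n>-2$, i.e.\ $n\geq -1$, we have $-1\leq n$, so the main assertion gives that $E$ is $n$-truncated.
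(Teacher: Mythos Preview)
Your proof is correct, but it takes a different route from the paper. The paper's argument is a one-line reduction: since the Yoneda embedding preserves and reflects finite limits (\fibref{item:rep right fib limit}), and truncatedness is a finite-limit condition (\fibref{item:truncated}), it suffices to check the statement after applying $\Map_\C(Z,-)$ for each object $Z$; this reduces everything to the corresponding fact for spaces, which the paper then cites from \cite[Lemma 8.6]{rezk2010toposes}.

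By contrast, you work entirely internally to $\C$, establishing the standard closure properties of $n$-truncated morphisms (under pullback, under raising the truncation level, and under composition) by simultaneous induction on $n$ using only the recursive definition via diagonals. Your identification of $\phi\colon X\times_Y X \to X\times_Z X$ as a pullback of $\Delta_g$ along $X\times_Z X \to Y\times_Z Y$ is correct and is indeed the crux of step (iii). What your approach buys is self-containment: you never leave the finitely complete $\infty$-category and never invoke an external result about spaces. What the paper's approach buys is brevity, at the cost of deferring the actual work to a reference. Both are valid; yours is arguably more in the spirit of the paper's emphasis on working in general finitely complete $\infty$-categories without presentability assumptions.
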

 \begin{proof}
 	By \fibref{item:rep right fib limit}, the Yoneda lemma reflects limits. As $n$-truncated objects are characterized via a limit condition (\fibref{item:truncated}) it suffices to prove that if for every object $Z$, the map of spaces $\Map(Z,p): \Map(Z,E) \to \Map(Z,B)$ is $k$-truncated with $\Map(Z,B)$ $n$-truncated and $k \leq n$, then $\Map(Z,E)$ is $n$-truncated. However, this is a classical result about the category of spaces and truncations, which can, for example, be found in \cite[Lemma 8.6]{rezk2010toposes}
 \end{proof}
 With this lemma at hand, we can now prove following proposition, which was also proven in \cite[Lemma 2.12]{vergura2019localization} in the particular case of presentable $\infty$-categories.
 
 \begin{propone}\label{prop:trunc univ}
 	Let $\C$ be a finitely complete $\infty$-category and $p: E \to B$ a morphism such that pulling back along $E \times B \to B \times B$ has a right adjoint and let $n>-2$. If $p$ is $n$-truncated, then $B$ and $E$ are $(n+1)$-truncated.
 \end{propone}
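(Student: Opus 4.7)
The plan is to exploit univalence of $p$ (which, despite not being named explicitly, must be part of the hypothesis since the claim already fails for $p=\id_B$, whose adjoint condition is trivially satisfied) in order to turn the truncation bound on the morphism $p$ into a bound on the object $B$, and then derive the bound on $E$ from \cref{lemma:ntrunc}. The engine of the argument is the following direct consequence of \cref{prop:univ final}: fully faithfulness of $\C_{/B}\hookrightarrow\Oall_\C$, unpacked fibre by fibre over $\Map_\C(Z,Y)$, yields for every object $Z$ and every pair $f,g\colon Z\to B$ a natural equivalence
$$\mathrm{Paths}_{\Map_\C(Z,B)}(f,g)\;\simeq\;\Eq_{/Z}(f^*E,g^*E),$$
obtained by specialising $Y=Z$ and looking at the fibre over $h=\id_Z$ in the comparison of mapping spaces on the two sides.

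Given this, one deduces that $B$ is $(n+1)$-truncated as follows. By the Yoneda lemma (\fibref{item:rep right fib limit}) it suffices to show each $\Map_\C(Z,B)$ is $(n+1)$-truncated, and since $n\geq -1$ this further reduces to showing every path space is $n$-truncated; by the displayed identification this becomes an $n$-truncation bound on $\Eq_{/Z}(f^*E,g^*E)$. The inclusion $\Eq_{/Z}(f^*E,g^*E)\hookrightarrow\Map_{/Z}(f^*E,g^*E)$ is a union of components, hence $(-1)$-truncated, so by \cref{lemma:ntrunc} applied in spaces (with $k=-1\leq n$) it suffices to bound $\Map_{/Z}(f^*E,g^*E)$. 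But this mapping space is the fibre of $\Map_\C(f^*E,g^*E)\to\Map_\C(f^*E,Z)$ over $f^*p$, and since $g^*p$ is obtained by pullback from the $n$-truncated morphism $p$, it is again $n$-truncated; this truncatedness is inherited by the induced map on mapping spaces, so the fibre, and hence $\Map_{/Z}(f^*E,g^*E)$, is $n$-truncated.

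Once $B$ is known to be $(n+1)$-truncated, a single application of \cref{lemma:ntrunc}, this time with $p\colon E\to B$ itself as the $k$-truncated morphism (with $k=n\leq n+1$), yields that $E$ is $(n+1)$-truncated, completing the proof. The only delicate point is the initial unpacking of \cref{prop:univ final} into the path-space/equivalence-space identification: one has to match the fibre of the mapping space in $\C_{/B}$ over $h\in\Map_\C(Z,Y)$ with a path space in $\Map_\C(Z,B)$, and the fibre of the mapping space in $\Oall_\C$ with $\Eq_{/Z}(f^*E,h^*g^*E)$. This is routine, and the remainder of the argument is purely bookkeeping about truncation levels.
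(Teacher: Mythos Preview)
Your proof is correct, and you are right that univalence of $p$ is an implicit hypothesis (the paper's own proof invokes it explicitly). However, your route differs from the paper's in a meaningful way.

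The paper argues \emph{internally} via the Segal object $\n(p)$ constructed in \cref{the:main theorem}: univalence identifies $\Delta\colon B\to B\times B$ with $\n(p)_{hoequiv}\to B\times B$, and the latter is $n$-truncated because it is a subobject of $\n(p)_1=\morphism{E\times B}{B\times E}_{B\times B}\to B\times B$, whose truncation bound is read off from the adjunction defining the internal hom. Your argument is the \emph{external} mirror of this: you use only \cref{prop:univ final}, unwind the fully faithful inclusion $\C_{/B}\hookrightarrow\Oall_\C$ over $\id_Z$ to identify path spaces in $\Map_\C(Z,B)$ with $\Eq_{/Z}(f^*E,g^*E)$, and then bound the latter by embedding it into $\Map_{/Z}(f^*E,g^*E)$. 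The two arguments match step for step (your $\Map_{/Z}(f^*E,g^*E)$ is the fibre of the paper's $\n(p)_1\to B\times B$), but yours avoids \cref{the:main theorem} entirely and hence never uses the hypothesis that pullback along $E\times B\to B\times B$ admits a right adjoint. So your approach is both more elementary and strictly more general; the paper's approach, on the other hand, illustrates how the Segal-object machinery packages exactly the same computation internally.
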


\begin{proof}
	If $B$ is $(n+1)$-truncated, then $E$ is also $(n+1)$-truncated. Indeed, $p$ is $n$-truncated by assumption and so it follows from \cref{lemma:ntrunc}. Hence, it suffices to prove that $B$ is $(n+1)$-truncated, which is equivalent to proving that $\Delta: B \to B \times B$ is $n$-truncated (\fibref{item:truncated}). As $p$ is univalent, this is equivalent to $\n(p)_{hoequiv} \to B \times B$ being $n$-truncated (\cref{the:main theorem}).
	
	Now, we have a mono map $\n(p)_{hoequiv} \to \n(p)_1 = \morphism{E \times B}{B \times E}_{B \times B}$, and so, by \cref{lemma:ntrunc}, it suffices to prove that $\n(p)_1$ is $n$-truncated. This follows immediately from the fact that $p$ is $n$-truncated. Indeed $\id_B\times p: B \times E \to B \times B$ is $n$-truncated, as $n$-truncated morphisms are closed under products, and so the right hand side of the equivalence 
	$$\Map_{/B \times B}(X, \n(p)_1) \simeq \Map_{/B \times B}( X \times_{B \times B} E \times B, B \times E)$$
	is $n$-truncated, giving us the desired result.
\end{proof}
 
 The previous result leaves the case $n=-2$, for which we can in fact prove an even stronger statement.
 
 \begin{propone} \label{prop:neg one trunc univ}
 	Let $p: E \to B$ be $(-2)$-truncated. Then $p$ is univalent if and only if $B$ is $(-1)$-truncated if and only if $E$ is $(-1)$-truncated.
 \end{propone}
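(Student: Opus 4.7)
The plan is to split the three claimed equivalences. Since $p$ is $(-2)$-truncated it is an equivalence, so $E \simeq B$ canonically, and the equivalence ``$B$ is $(-1)$-truncated iff $E$ is $(-1)$-truncated'' is immediate from the fact that $(-1)$-truncatedness is invariant under equivalence. The remaining content is the equivalence ``$p$ is univalent iff $B$ is $(-1)$-truncated'', which I would establish via \cref{prop:univ final}: $p$ is univalent iff the map of right fibrations $\C_{/B} \to \Oall_\C$ is fully faithful (equivalently $(-1)$-truncated), so it suffices to analyze the induced maps on mapping spaces.

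Fixing $c \in \C$ and two morphisms $f, g \colon c \to B$, I would study the induced map
\[
 \Map_{\C_{/B}}(f, g) \longrightarrow \Map_{\Oall_\C}(f^*p, g^*p)
\]
as a map of spaces over $\Map_\C(c, c)$, showing each side fibers compatibly and comparing fibers. On the source, the fiber over $h \colon c \to c$ is the path space in $\Map_\C(c, B)$ between $gh$ and $f$, by the standard formula for mapping spaces in over-categories. For the target, a morphism in $\Oall_\C$ above $h$ is a Cartesian square, hence amounts to a choice of equivalence $f^*E \simeq h^*(g^*E)$ over $c$; but since $p$ is an equivalence, each of $f^*E$, $g^*E$, $h^*(g^*E)$ is canonically equivalent to $\mathrm{id}_c$ in $\C_{/c}$, which is terminal (\fibref{item:rep right fib final}), so the space of such equivalences is contractible. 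Thus $\Map_{\Oall_\C}(f^*p, g^*p) \simeq \Map_\C(c, c)$ and the induced map on fibers collapses the path space to a point.

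Consequently the map of mapping spaces is an equivalence for all choices of $c, f, g$ iff every such path space in $\Map_\C(c, B)$ is contractible, iff $\Map_\C(c, B)$ is $(-1)$-truncated for every $c \in \C$, iff $B$ is $(-1)$-truncated (using that Yoneda reflects truncation levels as in \fibref{item:truncated}). The only genuinely non-formal step is the identification of the target fiber as contractible, which ultimately reduces to the observation that the terminal object of an $\infty$-category has contractible space of self-equivalences; everything else is bookkeeping about mapping spaces in $\C_{/B}$ and $\Oall_\C$.
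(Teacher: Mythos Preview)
Your proof is correct and takes a genuinely different route from the paper. The paper works internally via the machinery of \cref{the:main theorem}: since $p$ is an equivalence, one has $\morphism{E\times B}{B\times E}_{B\times B}\simeq B\times B$, and the Segal object is computed level-wise as $\n(p)_n\simeq B^{n+1}$ (the \v{C}ech nerve of $B$); a direct computation then gives $\n(p)_{hoequiv}\simeq B\times B$, and the equivalence of univalence with completeness yields that $p$ is univalent iff $\Delta\colon B\to B\times B$ is an equivalence, i.e.\ $B$ is $(-1)$-truncated. You instead stay entirely on the external side, using only \cref{prop:univ final} and analyzing $\C_{/B}\to\Oall_\C$ on mapping spaces. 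Your key observation---that since $f^*E$ and $h^*g^*E$ are both terminal in $\C_{/c}$ the target mapping space collapses to $\Map_\C(c,c)$---reduces full faithfulness to contractibility of path spaces in $\Map_\C(c,B)$, hence to $(-1)$-truncatedness of $B$. Your argument is more elementary in that it never touches the Segal-object technology and does not need to observe that the right-adjoint hypothesis is automatically satisfied; the paper's proof, by contrast, serves as a first illustration of the main theorem. One small remark: you restrict to $f,g\colon c\to B$ with common domain, whereas full faithfulness requires all pairs; the argument for $f\colon c\to B$, $g\colon c'\to B$ is identical (fibering over $\Map_\C(c,c')$), and in any case the same-domain case with $h=\id_c$ already forces $B$ to be $(-1)$-truncated, which then implies the general case---but it would be cleaner to say so explicitly.
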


\begin{proof}
	As $p$ is $(-2)$-truncated, meaning an equivalence, $B$ is $(-1)$-truncated if and only if $E$ is $(-1)$-truncated. Hence it suffices to prove that $B$ is $(-1)$-truncated if and only $p$ is univalent.
	As $p:E \to B$ is an equivalence, $E \times B \to B \times B$ and $B \times E \to B \times B$ are also equivalences and $\morphism{E \times B}{B \times E}_{B \times B} \simeq B \times B$, where the source and target map are given by projection $\pi_1,\pi_2: B \times B \to B$. Hence, the Segal object $\n(p)$ is characterized level-wise by $\n(p)_n \simeq B^{n+1}$.
	
	Now, the map $d_1d_0: \n(p)_3 \simeq B^4 \to B^2 \simeq \n(p)_1$ restricts to the first and third coordinate and similarly, $d_1d_3$ restricts to the second and fourth coordinate and so $(d_1d_0,d_1d_3): \n(p)_3 \to \n(p)_1 \times \n(p)_1$ is an equivalence. Hence $\n(p)_{hoequiv} \simeq B \times B$, as it is given by pulling back $(d_1d_0,d_1d_3)$ along $B \times B \to B^4$. We conclude that $p$ is univalent if and only if $\Delta: B \to B \times B$ is an equivalence which is equivalent to $B$ being $(-1)$-truncated.
\end{proof}
 
 The previous proposition applies in particular to the identity map (which is indeed $(-2)$-truncated). Hence the identity map $\id_B$ is univalent if and only if $B$ is $(-1)$-truncated. Note this should be seen as a direct generalization of \cite[Examples 3.2.11]{kapulkinlumsdaine2012kanunivalent}, where they state that the only univalent identity maps come from the point and the empty set, which corresponds to the fact that these are the only $(-1)$-truncated spaces. 

We move on to the next level case. When is a $(-1)$-truncated map $p:E\hookrightarrow B$ univalent? By \cref{prop:trunc univ}, $E$ and $B$ need to be $0$-truncated. Hence, for every object $X$, $\Map_\C(X,B)$ is (equivalent to) a set and category structure on $\Map(X,B)$ reduces to a preorder, meaning the functor represented by the Segal object $\Map_\C(-,\n(p)): \C^{op} \to\cat_\infty$ takes value in preorders. 
We can very explicitly describe the preorder structure on the set $\Map(X,B)$ as follows: For two elements $f,g: X \to B$ we have $f \leq g$ if and only if $f^*E$ is subobject of $g^*E$ over $X$, meaning there is a morphism $f^*E \to g^*E$ over $X$, which will be (homotopically) unique, if it exists.

Two objects $f,g$ in the preorder $\Map(X,B)$ are isomorphic if and only if $f \leq g$ and $g \leq f$. We conclude that $p$ is univalent if and only if the internal preorder structure on $B$ is complete, meaning for every object $X$ and morphisms $f,g: X \to B$  we have 
\begin{equation} \label{eq:univ neg one}
f = g \text{ in } \Map_\C(X,B) \Leftrightarrow f^*E \simeq g^*E \text{ over } X.
\end{equation}
This generalizes \cref{cor:complete zero cat} from $1$-categories to $\infty$-categories, meaning univalence is equivalent to the preorder structure on $\Map_\C(X,B)$ being a poset. 

Using our intuition from \cref{sec:univ segal obj} we can reformulate this statement in terms of functors. For an object $X$, let $\Sub_\C(X)$ be the set of isomorphism classes of subobjects of $X$ ($(-1)$-truncated morphisms with target $X$). As $\C$ has pullbacks, this gives us a functor  $\Sub_\C: \C^{op} \to \set$. The inclusion $p:E \to B$ induces a natural transformation 
\begin{equation} \label{eq:univ neg one functor}
\Map_\C(-,B) \to \Sub_\C
\end{equation}
which takes a morphism $f:X \to B$ to the pullback $f^*E \to X$.
Using $\Sub$ the statement in \ref{eq:univ neg one} is now equivalent to stating the natural transformation is an injection. 
We would like to internalize such statements. However, that is generally not possible as the functor $\Sub$ is not representable in a general locally Cartesian closed category. Hence, we need to introduce the following definition.

\begin{defone} \label{def:soc}
	Let $\C$ be a finitely complete $\infty$-category.
	A {\it subobject classifier} $\Omega$ is an object representing the functor $\Sub: \C^{op} \to \set$, meaning there is a natural isomorphism of set-valued functors $\Map_\C(-,\Omega) \cong \Sub$.
\end{defone}

Concretely, the bijection $\Map_\C(\Omega, \Omega) \cong \Sub(\Omega)$ implies that there is a {\it universal subobject} $t: 1 \to \Omega$ (corresponding to $\id_\Omega$) and all other $(-1)$-truncated morphisms can be obtained as its pullback.
So, let us now assume $\C$ has a subobject classifier, then the statement in \ref{eq:univ neg one functor} is equivalent to $\Map_\C(-,B) \to \Map_\C(-,\Omega)$ is an injective map of sets, which (by the Yoneda lemma \fibref{item:over cat trunc}) is equivalent to $B \to \Omega$ being mono. Hence, a $(-1)$-truncated morphism $p: E \to B$ is univalent if and only if the corresponding map $B \to \Omega$ is mono.
In particular, the universal subobject $t:1 \to \Omega$ must also be univalent. 

While trying to classify all $(-1)$-truncated univalent morphisms, we have stumbled upon locally Cartesian closed $\infty$-categories with subobject classier. In the context of classical category theory such categories have been studied extensively as a foundation for mathematics and are known as {\it elementary toposes} \cite{tierney1973elementarytopos,johnstone2002elephanti,johnstone2002elephantsii,maclanemoerdijk1994topos}. In particular, elementary toposes include the category of sets and categories of set-valued presheaves. Hence we have just made following valuable observation about elementary toposes.

\begin{propone} \label{prop:mono univ eht}
	Let $\C$ be a locally Cartesian $\infty$-category with subobject classifier. Then a $(-1)$-truncated map is univalent if and only if its classification map to the subobject classifier is $(-1)$-truncated. This in particular applies to all elementary $1$-toposes.
\end{propone}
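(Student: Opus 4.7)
The plan is to reduce the statement to the special case of the universal subobject $t \colon 1 \to \Omega$ and then apply \cref{prop:univ if mono}. First I would verify that $t$ itself is univalent. Because $\C$ is locally Cartesian closed, pullback along any morphism (in particular along $1 \times \Omega \to \Omega \times \Omega$) admits a right adjoint, so the hypothesis of \cref{the:main theorem} is satisfied for $t$. Moreover $t$ is $(-1)$-truncated, since its source is the terminal object and hence the diagonal $1 \to 1 \times_\Omega 1$ is automatically an equivalence. Consequently the characterization recorded in equation \ref{eq:univ neg one} applies: $t$ is univalent if and only if, for every $X$ and every pair $f,g \colon X \to \Omega$, the condition $f^{*}1 \simeq g^{*}1$ over $X$ forces $f = g$ in $\Map_\C(X,\Omega)$. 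But this is exactly the defining bijection $\Map_\C(X,\Omega) \cong \Sub_\C(X)$ of the subobject classifier, which sends $f$ to the isomorphism class of $f^{*}1$. Hence $t$ is univalent.

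With that in hand, I would take an arbitrary $(-1)$-truncated morphism $p \colon E \to B$ and use the universal property of $\Omega$ to produce a classifying map $\chi_p \colon B \to \Omega$ fitting in a pullback square
\begin{center}
\begin{tikzcd}
E \arrow[r] \arrow[d, "p"'] & 1 \arrow[d, "t"] \\
B \arrow[r, "\chi_p"'] & \Omega
\end{tikzcd}
\end{center}
exhibiting $p$ as the pullback of $t$ along $\chi_p$. Applying \cref{prop:univ if mono} to this square, where the top morphism $t$ is univalent by the previous step, gives the equivalence: $p$ is univalent if and only if $\chi_p$ is mono. This is exactly the first assertion of the proposition.

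For the closing clause, any elementary $1$-topos is by definition a locally Cartesian closed $1$-category equipped with a subobject classifier, and any such $1$-category is in particular a finitely complete $\infty$-category in which pullback along every morphism has a right adjoint, so the first part of the statement applies verbatim. The main obstacle is conceptual rather than technical: one must recognize that univalence of the universal subobject $t$ is just a reformulation of the defining property of $\Omega$, after which the full statement is an immediate consequence of the pullback-stability result \cref{prop:univ if mono}. No further calculation is required.
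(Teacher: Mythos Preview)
Your argument is correct and amounts to a slight reorganization of the paper's. The paper argues directly for an arbitrary $(-1)$-truncated $p \colon E \to B$: univalence is equivalent, via \ref{eq:univ neg one}, to the natural transformation $\Map_\C(-,B) \to \Sub_\C$ of \ref{eq:univ neg one functor} being an injection, and then representability of $\Sub_\C$ by $\Omega$ together with Yoneda shows this is equivalent to $B \to \Omega$ being mono. You instead specialize \ref{eq:univ neg one} to $t$ first and then bootstrap to general $p$ via \cref{prop:univ if mono}; the ingredients are the same, and your packaging via \cref{prop:univ if mono} is arguably cleaner.

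One point needs correction: your justification that $t$ is $(-1)$-truncated ``since its source is the terminal object'' is not valid in an $\infty$-category. For a point $x \colon 1_\C \to X$ the fiber product $1_\C \times_X 1_\C$ is the based loop object of $X$ at $x$, which need not be terminal (take $X$ to be any non-simply-connected space in $\s$). The correct reason is simply that $t$ is by definition the universal \emph{subobject} of $\Omega$ --- the element of $\Sub(\Omega)$ corresponding to $\id_\Omega$ under $\Map_\C(\Omega,\Omega) \cong \Sub(\Omega)$ --- and subobjects are $(-1)$-truncated by definition. With this fix your proof goes through.
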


We now want to move on to $0$-truncated univalent maps. Given how elementary $1$-toposes naturally arose out of our effort to understand $(-1)$-truncated univalent maps, we will start there. Hence, we want to classify all $0$-truncated univalent morphisms in an elementary $1$-topos. Our first hope might be that there are no $0$-truncated univalent morphisms and we are already done. In fact here is a promising example.

\begin{lemone} \label{lemma:univ set}
	If a map in $\set$ is univalent then it has to be mono.
\end{lemone}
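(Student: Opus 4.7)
The plan is to reduce the statement to a fiberwise condition and then invoke the computation of univalent maps to the terminal object carried out in Example \ref{ex:univalent over point}. Concretely, suppose $p : E \to B$ is univalent in $\set$. For every element $b \in B$, the singleton inclusion $i_b : \{b\} \hookrightarrow B$ is a mono in $\set$, and the pullback of $p$ along $i_b$ is the fiber $p^{-1}(b) \to \{b\}$.

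By Proposition \ref{prop:univ if mono}, univalence of $p$ together with the fact that $i_b$ is mono implies that this pulled back map $p^{-1}(b) \to \{b\}$ is itself univalent. Since $\{b\}$ is a terminal object in $\set$, Example \ref{ex:univalent over point} applies, and tells us that $p^{-1}(b) \to \{b\}$ is univalent if and only if the set $p^{-1}(b)$ has a contractible space of self-equivalences. In $\set$ this self-equivalence space is the symmetric group $\mathrm{Sym}(p^{-1}(b))$, which is trivial precisely when $|p^{-1}(b)| \leq 1$.

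Therefore every fiber of $p$ contains at most one element, which is exactly the statement that $p$ is injective, i.e.\ mono. There is no real obstacle here: the argument is a direct application of the previously established structural results, and the only point worth emphasizing is that Proposition \ref{prop:univ if mono} is what lets us transfer univalence from the total map to each fiber, while Example \ref{ex:univalent over point} supplies the classification of univalent maps over the point in $\set$.
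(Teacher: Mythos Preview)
Your proof is correct and follows essentially the same approach as the paper: restrict along the mono inclusion $\{b\}\hookrightarrow B$, apply \cref{prop:univ if mono} to deduce that each fiber is univalent over the point, and then use \cref{ex:univalent over point} to conclude that each fiber has at most one element. The only cosmetic difference is that you phrase the self-equivalence condition in terms of the symmetric group, whereas the paper speaks of non-trivial automorphisms.
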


\begin{proof}
	If the map $p: E \to B$ is univalent in $\set$ then for every element $b \in B$ then fiber map $E_b \to \{ b \}$ also has to be 
	univalent as it is the pullback along the mono map $b: \{b\} \xrightarrow{\subseteq} B$, by \cref{prop:univ if mono}) . However, by \cref{ex:univalent over point} this only happens if
	$E_b$ has no non-trivial automorphisms. In the category of sets this only holds if $E_b$ is either empty or has one object.
	We just showed that the fiber over each point $b$ is either empty or has one point and so $E \to B$ is an injection of sets.
\end{proof}

This result suggests another way to think of univalent morphisms in $\set$. For a given map of sets $p:T \to S$, we can construct a functor $\Fib(p): S \to \set$, which takes an element $s \in S$ to the fiber $p^{-1}(s)$. Then $p$ is univalent if and only if $\Fib(p): S \to \set$ is fully faithful.
Indeed, $p$ is univalent if we have 
$$\Hom_{\set}(\Fib(p)(s),\Fib(p)(s')) = 
\begin{cases}
	\{ \id\} & \text{ if } s= s' \\
	\emptyset & \text{ if } s \neq s' \\ 
\end{cases}
$$
which is precisely the condition that a functor out of $S$ (which as only identity maps) is fully faithful. 
This result should remind us of \cref{prop:mono univ eht}, where now $\set$ is trying to play the role of the subobject classifier. However, there are two main differences that complicate the picture: First, $\set$ is a category and so not really an object in the category. Second, even if we just took the objects, then the collection would be too large to be an element in the category.

We can combine \cref{lemma:univ set} and \cref{prop:mono univ eht} to completely characterize the poset $\Univ_{\set}$. 
The subobject classifier in sets is the set $\{0,1\}$ and so the poset of univalent morphisms in set consists of the following morphisms:
\begin{center}
	$\Bigg\{$
	\begin{tabular}{ccccccc}
		$\emptyset$ & & $\emptyset$ & & $\{ 1 \}$ & & $\{ 1 \}$ \\
		$\downarrow$ & , & $\downarrow$ & , & $\downarrow$ & , & $\downarrow$ \\
		$\emptyset$ & & $\{ 1 \}$ & & $\{ 1 \}$ & & $\{ 0, 1 \}$
	\end{tabular}
    $\Bigg\}$
\end{center}
Here the poset structure comes from the subsets of $\{0,1\}$.

However, looking at a slightly more complicated example reveals this does not generalize.

\begin{exone} \label{ex:sthree univ}
    Let $G = S_3$ the group of permutation of $3$ elements.
	The category of $G$-sets is an elementary topos as it is a category of set-valued presheaves.
	Let $S = \{ 1,2,3 \}$, which comes with an obvious $G$ action.
	Then $S$ has no nontrivial automorphisms. Indeed, if $\sigma: S \to S$ is an automorphism
	then for every element $\tau$ in $G$ we need to have $\sigma \tau = \tau \sigma$ in order to satisfy the equivariance condition.
	However, this is only satisfied by the identity as $S_3$ has a trivial center.
	\par 
	Thus $S$ has no non-trivial automorphism in the category of $G$-sets. By \cref{ex:univalent over point} we deduce that the map 
	$S \to *$ is univalent. However, this map is not mono as a mono map in the category of presheaves is an injection of the underlying sets.
\end{exone}

The example illustrates that we do need to worry about $0$-truncated univalent morphisms in elementary toposes. Hence, our next aim is to classify them via an object similar to the subobject classifier. However, we run into the same two problems we encountered when trying to find an ``object classifier" in the category of sets. By \cref{prop:trunc univ}, if $p:E \to B$ is $0$-truncated and univalent, then $B$ is $1$-truncated. Hence, repeating the steps above we need to analyze the natural transformation $\Map(-,B) \to (\C_{/-})^\simeq$ similar to \ref{eq:univ neg one functor}. However, $(\C_{/-})^\simeq$ is a groupoid and hence can only be represented by a set if all isomorphisms are trivial, meaning we cannot give a definition analogous to \cref{def:soc}. The only possible way is to increase the categorical level. This naturally leads us to the following question:

\begin{queone}
	Given an elementary $1$-topos $\E$ does there exists an $\infty$-category $\hat{\E}$ such that $\E$ is equivalent to its subcategory of $0$-truncated objects, $\tau_0\hat{\E}$, and in which all univalent morphisms can be effectively characterized?
\end{queone}
This question has not been answered in full generality and is in fact only expected to hold in some (important) cases. We now want to focus on a case where this question has in fact an affirmative answer. 
Assume that the elementary $1$-topos $\E$ is additionally a {\it locally presentable category}. Such toposes are usually called {\it Grothendieck $1$-toposes} and were developed by Grothendieck and Bourbaki in the study of algebraic geometry \cite{sga1972tome1,sga1972tome2,sga1972tome3}. Grothendieck toposes can be characterized alternatively as {\it left exact localizations of presheaf categories}: A category $\G$ is a Grothendieck topos if and only if there exists an adjunction 
\begin{equation} \label{eq:topos}
 \begin{tikzcd}
 	\Fun(\C^{op},\set) \arrow[r, "a", shift left=1.8, "\bot"'] & \G \arrow[l, "i", shift left=1.8]
 \end{tikzcd}
\end{equation}
where $\C$ is small, $i$ is fully faithful and $a$ preserves finite limits \cite[Section V.4]{maclanemoerdijk1994topos}. The adjunction \ref{eq:topos} can be directly generalized to the $\infty$-categorical setting. An $\infty$-category $\G$ is a Grothendieck $\infty$-topos if there exists an adjunction 
\begin{equation} \label{eq:infty topos}
	\begin{tikzcd}
		\Fun(\C^{op},\s) \arrow[r, "a", shift left=1.8, "\bot"'] & \G \arrow[l, "i", shift left=1.8]
	\end{tikzcd}
\end{equation}
where $\C$ is small, $i$ is fully faithful and $a$ is accessible and preserves finite limits \cite{lurie2009htt,rezk2010toposes}.
 Every Grothendieck $1$-topos $\G$ has a (non-unique) {\it enveloping $\infty$-topos} \cite[Section 11]{rezk2010toposes}, meaning there exists a Grothendieck $\infty$-topos $\hat{\G}$, such that $\tau_0\hat{\G} \simeq \G$. So, we have successfully lifted our Grothendieck topos to a Grothendieck $\infty$-topos. Now, can we characterize univalent morphisms effectively in $\hat{\G}$?
 
 Before we proceed let us make a detour and do an extended analysis of the simplest possible case of a Grothendieck $\infty$-topos, namely when we take $\C$ to be the final $\infty$-category and $a = i = \id$ in \ref{eq:infty topos}, which gives us $\G = \s$. How can we completely characterize univalent morphisms in $\s$?
 
 Assume $p: Y \to X$ is a univalent. We can assume $X$ is connected, as by \cref{prop:univ if mono}, $p$ restricted to each path-component of $X$ needs to be univalent as well. Fix a point $x$ in $X$ and denote the fiber of $p$ over $x$ by $Y_x$. Then univalence states that $\Eq(Y_x) \simeq \Omega X$, which means that $X \simeq B(\Eq(Y_x))$. The fact that the pullback of $Y \to B(\Eq(Y_x))$ along the point is $\Eq(Y_x) = \Omega B \Eq(Y_x)$ implies that $Y$ must be contractible. We conclude that a univalent morphism of spaces with connected codomain is a classifying map of the form $E(\Eq(X)) \to B(\Eq(X))$ for some space $X$. This observation can in fact be generalized and used to characterize univalent morphism with connected codomain in arbitrary Grothendieck $\infty$-toposes \cite[Subsection 6.1]{gepnerkock2017univalence}
 
 We can use this result to now characterize all univalent morphisms of spaces. If an arbitrary map $p: E \to B$ is univalent, then we necessarily have $p \simeq \coprod_{i \in I} p_{X_i}$, where $p_{X_i}: E(\Eq(X_i)) \to B(\Eq(X_i))$ is a universal map. However, this is not sufficient as for $i \neq j$ we could have $X_i \simeq X_j$, which would then imply $p_{X_i} \simeq p_{X_j}$ violating univalence. Indeed, the map from the final object $* \to X_i$, $* \to X_j$ would not be equivalent, whereas the fibers $X_i$ and $X_j$ would be equivalent. Hence, an arbitrary univalent morphism in $\s$ is of the form 
 \begin{equation} \label{eq:univ spaces}
 \ds\coprod_{i \in I}(p_i:E(\Eq(X_i)) \to B(\Eq(X_i)))
 \end{equation} 
 with $X_i \not\simeq X_j$, for $ i \neq j$.
 
 We can use this construct some very useful univalent morphisms. Let $\kappa$ be some cardinal and let $(\s^\kappa)^\simeq$ be the maximal sub-$\infty$-groupoid of the full sub-category of $\kappa$-compact spaces. Moreover, let $(\s_*^\kappa)^\simeq$ be the maximal sub-$\infty$-groupoid of the full sub-category of $\kappa$-compact {\it pointed} spaces and notice it comes with a forgetful map $(\s_*^\kappa)^\simeq \to (\s^\kappa)^\simeq$.
 It satisfies precisely the condition stated above: each path component is equivalent to the space self-equivalences of a certain space, but no two different path components are equivalent. Moreover, the fiber over a given point representing an object $X$ is precisely the space $X$ itself. Hence $(\s_*^\kappa)^\simeq \to (\s^\kappa)^\simeq$ is univalent. More importantly, we can obtain every $\kappa$-compact morphism $p:Y \to X$ as a pullback of this map and so we have following result: A map of $\kappa$-compact spaces $Y \to X$ is univalent if and only if the corresponding map $X \to (\s^{\kappa})^\simeq$ is $(-1)$-truncated.

 We can in fact externalize this result using the straightening construction. Following the equivalence $\widehat{\s}_{/X} \simeq \Fun(X,\widehat{\s})$ (\fibref{item:grothendieck}/\fibref{item:rfib over space}), we can (homotopically uniquely) associate a functor $X \to \widehat{\s}$ to every map $p:Y \to X$. Then $p$ is univalent if and only if the associated map is fully faithful, meaning we have an equivalence 
 \begin{equation} \label{eq:univ ff}
  (\Univ_\s)_{/X} \simeq \Fun^{FF}(X,\widehat{\s})
 \end{equation}
 where the right hand side are the fully faithful functors out of $X$. This can be seen as an appropriate generalization of \cref{prop:mono univ eht} to the $\infty$-category of spaces.
 
 While the external characterization given in \ref{eq:univ ff} cannot be proven (or even stated) for every Grothendieck $\infty$-topos, the internal classification has been successfully generalized. In \cite[Theorem 6.1.6.8]{lurie2009htt} Lurie proves that if $\hat{\G}$ is a Grothendieck $\infty$-topos and $\kappa$ is a large enough cardinal, then there exists a {\it universal family} $\U^\kappa_* \to \U^\kappa$, such that the induced map of right fibration (via the Yoneda lemma \fibref{item:rep right fib}) $\Map_{\hat{\G}}(-,\U^\kappa) \to \Oall_{\hat{\G}}$ is an embedding and the essential image precisely corresponds to the {\it $\kappa$-compact objects}. 
 Presentability implies that every morphism is $\kappa$-small for some cardinal $\kappa$.
 We can now use these results to prove the following, which has also been similarly analyzed in \cite{gepnerkock2017univalence}.
 
\begin{propone} \label{prop:univ n topos}
	Let $\G$ be a Grothendieck topos with enveloping $\infty$-topos $\hat{\G}$. Moreover, let $p: E \to B$ be a morphism in $\G$. Then the following are equivalent:
	\begin{enumerate}
		\item $p$ is univalent in $\G$.
		\item $p$ is univalent in $\hat{\G}$.
		\item The classifying map $B \to \U^\kappa$ classifying $p$ is mono. 
	\end{enumerate}
\end{propone}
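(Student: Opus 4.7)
I will establish (2)$\Leftrightarrow$(3) first, using Lurie's object classifier in $\hat{\G}$, then tackle the comparison (1)$\Leftrightarrow$(2) between univalence in $\G$ and in $\hat{\G}$ via the fully faithful finite-limit-preserving inclusion $\iota:\G\hookrightarrow\hat{\G}$.

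For (2)$\Leftrightarrow$(3): choose a cardinal $\kappa$ so that $p$ is $\kappa$-compact, yielding a classifying map $B\to\U^\kappa$ in $\hat{\G}$. By \cref{prop:univ final}, $p$ is univalent in $\hat{\G}$ iff the induced map $\hat{\G}_{/B}\to\Oall_{\hat{\G}}$ of right fibrations is an inclusion. This map factors as $\hat{\G}_{/B}\to\hat{\G}_{/\U^\kappa}\to\Oall_{\hat{\G}}$, where the second factor is an embedding by \cite[Theorem 6.1.6.8]{lurie2009htt}. Hence the composite is an inclusion iff $\hat{\G}_{/B}\to\hat{\G}_{/\U^\kappa}$ is, and by the Yoneda lemma (\fibref{item:rep right fib}) combined with \fibref{item:over cat trunc}(II) this is equivalent to $B\to\U^\kappa$ being mono. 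The direction (2)$\Rightarrow$(1) is then immediate: since $\G$ is the full sub-$\infty$-category of $0$-truncated objects of $\hat{\G}$, $\iota$ is fully faithful and preserves pullbacks, so \cref{prop:over cat univalent} applies.

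For (1)$\Rightarrow$(2), the crux of the argument: given univalence of $p$ in $\G$, I want to show that the presheaf map $\Map_{\hat{\G}}(-,B)\to(\hat{\G}_{/-})^\simeq$ is pointwise mono on all of $\hat{\G}$. Using that $\hat{\G}$ is the enveloping $\infty$-topos of a Grothendieck $1$-topos, it arises as $\infty$-sheaves on a $1$-site, so any $X\in\hat{\G}$ can be written as a colimit $X\simeq\colim_i X_i$ where the $X_i$ are sheafified representables and hence $0$-truncated, i.e., lie in $\G$. Descent \cite[Theorem 6.1.3.9]{lurie2009htt} then expresses $(\hat{\G}_{/X})^\simeq$ as $\lim_i(\hat{\G}_{/X_i})^\simeq$, and $\Map_{\hat{\G}}(X,B)\simeq\lim_i\Map_\G(X_i,B)$ by fully faithfulness of $\iota$. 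Since monomorphisms of presheaves of spaces are stable under limits, it suffices to verify that each $\Map_\G(X_i,B)\to(\hat{\G}_{/X_i})^\simeq$ is mono. For $X_i\in\G$, this map factors as $\Map_\G(X_i,B)\to(\G_{/X_i})^\simeq\to(\hat{\G}_{/X_i})^\simeq$: the first factor is mono by univalence of $p$ in $\G$ tested against the object $X_i$, and the second factor is mono because fully faithfulness of $\iota$ yields fully faithfulness of $\G_{/X_i}\hookrightarrow\hat{\G}_{/X_i}$, hence a mono on maximal sub-$\infty$-groupoids.

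The main technical obstacle will be implementing the limit-of-monos reduction cleanly, in particular confirming that generators of $\hat{\G}$ can really be chosen in $\G$ (for which Rezk's construction of the enveloping $\infty$-topos from a $1$-site is the standard input) and that the two limit presentations on the mapping-space and slice-groupoid sides are intertwined by the natural comparison map. Once this bookkeeping is carried out, the pointwise verification is formal and completes the cycle (1)$\Rightarrow$(2)$\Leftrightarrow$(3)$\Rightarrow$(1).
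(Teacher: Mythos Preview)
Your argument is correct. For (2)$\Leftrightarrow$(3) you and the paper do essentially the same thing: the paper simply invokes \cref{prop:univ if mono} after writing $p$ as a pullback of the universal family $\U^\kappa_*\to\U^\kappa$, whereas you unwind that proposition by hand via the factorization $\hat{\G}_{/B}\to\hat{\G}_{/\U^\kappa}\hookrightarrow\Oall_{\hat{\G}}$.

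The real difference is in (1)$\Leftrightarrow$(2). The paper does not split this into two directions at all; it observes that the $0$-truncation $\tau_0:\hat{\G}\to\G$ is a locally Cartesian closed localization (citing \cite[Lemma 1.8]{gepnerkock2017univalence}) and then invokes \cref{prop:lccc localization reflect univalence}, which gives both implications at once. Your route is more elementary in that it avoids the Gepner--Kock input on localizations: (2)$\Rightarrow$(1) is just \cref{prop:over cat univalent}, and (1)$\Rightarrow$(2) is a direct descent argument, essentially the same mechanism as the paper's proof of \cref{prop:yoneda reflects univalence}, transplanted from the Yoneda embedding to the inclusion $\G\hookrightarrow\hat{\G}$. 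What makes your version work is the specific fact that $\hat{\G}$ is generated under colimits by objects of $\G$ (sheafified representables on a $1$-site are $0$-truncated since sheafification is left exact), which you rightly flag as the point needing care. The paper's approach is shorter and more conceptual; yours trades the black-box localization lemma for an explicit generation-plus-descent computation, and has the mild advantage of making transparent exactly where the hypothesis ``enveloping $\infty$-topos of a $1$-topos'' enters.
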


\begin{proof}
	$(1) \Leftrightarrow (2)$ Truncation is a locally Cartesian closed localization of locally Cartesian closed presentable $\infty$-categories \cite[Lemma 1.8]{gepnerkock2017univalence} and so the result follows from \cref{prop:lccc localization reflect univalence}.
	
	$(2) \Leftrightarrow (3)$ The morphism $p$ is $\kappa$-compact for some weakly inaccessible cardinal $\kappa$. Hence, there exists a pullback of the form
	\begin{center}
	 \begin{tikzcd}
	 	E \arrow[r] \arrow[d]  & \U^{\kappa}_* \arrow[d] \\
	 	B \arrow[r] & \U^{\kappa}
	 \end{tikzcd}	
    \end{center}
	The equivalent conditions now follow from \cref{prop:univ if mono}.
\end{proof}
  
  Up until now we gave an abstract result helping us characterize univalent morphisms as certain pullbacks. However, can we use these results to classify univalent morphisms, the way we have done, for example, in the category of sets (\cref{lemma:univ set})? Unfortunately, this is not the case and even the simplest cases beyond $(-1)$-truncated univalent morphisms cannot be easily classified. On the other side the additional complication points to interesting connections to other branches of mathematics.
     
   Let us try to characterize $1$-truncated univalent morphisms of spaces $E \to *$ where $E$ is connected and $*$ is contractible. In that case $E \simeq BG$ where $G$ is a discrete group. By \cref{ex:univalent over point}, $BG \to *$ is univalent if and only if $\Eq(BG)$ is contractible. We want to characterize $\Eq(BG)$ in terms of $G$. Recall, there is a {\it fundamental groupoid} functor \cite{dieck2008algebraictopology}, which gives us an equivalence between $1$-truncated connected spaces and connected groupoids
   $$\ds\Pi_{\leq 1}: \s_{\leq 1}^{cn} \to \Grpd^{cn}.$$
   Moreover, the {\it nerve functor} gives us equivalence $N: \Grp \to \Grpd^{cn}$ that takes a group to a groupoid with one object and mapping set that group. Let us fix an inverse $(-)^{grp}: \Grpd^{cn} \to \Grp$. Composing with the fundamental groupoid gives us an equivalence $\Pi_{\leq 1}^{grp}: \s_{\leq 1}^{cn} \to \Grp$. It takes a space $X$ to the fundamental group $\pi_1(X,x_0)$ (where the base point $x_0$ in $X$ is chosen by the functor $(-)^{grp}$), and a continuous map $f: X \to Y$ to a {\it conjugacy class} $\pi_1([f]):\pi_1(X,x_0) \to \pi_1(Y,y_0)$ (where we have to restrict to conjugacy classes as we have fixed a base point).
   Using this equivalence, we can associate to a space $X$ the group $\Pi_{\leq 1}^{cn}(\Eq(X))$, which can be explicitly described as conjugacy classes of automorphisms of $\pi_1(X,x_0)$. Hence, for univalence, we must have that all equivalences are in the same equivalence class, meaning they are conjugate to each other. However, we also need to know that no morphism is conjugate to itself in a non-trivial way (which would correspond to a loop in the space $\Eq(X)$). We want to give a more explicit description just in terms of groups, which requires us to review some group theory \cite{rotman1995groups,hungerford1980algebra,dummitfoote2004algebra}.
   
   For a group $G$, we denote the {\it center} (the normal subgroup that commutes with all elements) by $Z(G)$.
   Moreover, let $\Inn(G)$, the {\it inner automorphisms}, be the normal subgroup of $\Aut(G)$ consisting of conjugation automorphisms $\theta_g:  G\to G$, $\theta_g(h) = ghg^{-1}$. Define $\Out(G)$, the {\it outer automorphisms}, as the quotient group $\Aut(G)/\Inn(G)$. 
   The description in the previous paragraph means that we have an isomorphism of group $\Pi_{\leq 1}^{cn}(\Eq(X)) \cong \Out(G)$. So in order for $BG$ to be univalent $\Out(G)$ needs to be trivial. Moreover, for a given group isomorphism a conjugation action is trivial if the conjugating element is in the center of the group, meaning we need the center of the group to be trivial as well. A group that satisfies these two conditions is called {\it complete}.
   
   In fact we can phrase the completeness condition in a way that should look more familiar from a univalent perspective. We have following exact sequence of groups 
   \begin{equation}
   	\begin{tikzcd}
   		1 \arrow[r] & Z(G) \arrow[r, hookrightarrow] & G \arrow[r, "\theta"] & \Aut(G) \arrow[r, twoheadrightarrow] & \Out(G) \arrow[r]& 1
   	\end{tikzcd}
   \end{equation}
   meaning $G$ is complete if and only if $\theta: G \to \Aut(G)$ is a group isomorphism. Hence, $1$-truncated univalent maps over the final objects are as complicated as complete groups, which we can summarized in the following result.
   
   \begin{corone} \label{cor:univalent complete group}
   	There is a bijection between complete groups and connected $1$-truncated univalent spaces over the point.
   \end{corone}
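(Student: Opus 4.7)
The plan is to exhibit the bijection by unwinding the translation between $1$-truncated connected spaces and groups, and then reading off the univalence condition as the vanishing of both the center and the outer automorphism group.

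First I would fix a skeleton of the equivalence $B(-):\Grp \to \s_{\leq 1}^{cn}$ (inverse to $\Pi_{\leq 1}^{grp}$ as reviewed just above the statement), so that every connected $1$-truncated space $E$ over the point is (up to equivalence) of the form $BG$ for a uniquely determined discrete group $G$. By \cref{ex:univalent over point}, the map $BG \to *$ is univalent if and only if the space $\Eq(BG)$ of self-equivalences is contractible. Since $BG$ is $1$-truncated, $\Eq(BG)$ is itself a $1$-type, so it is contractible exactly when $\pi_0(\Eq(BG))$ and $\pi_1(\Eq(BG), \id)$ are both trivial.

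Next I would identify these two homotopy groups in purely group-theoretic terms. Applying $\Pi_{\leq 1}^{grp}$ and transporting along the nerve equivalence $\Grp \simeq \Grpd^{cn}$, a self-equivalence of $BG$ corresponds to an isomorphism of the one-object groupoid associated to $G$, and two such isomorphisms are homotopic if and only if they differ by an inner automorphism; hence $\pi_0(\Eq(BG)) \cong \Out(G)$, as already remarked in the paragraph preceding the corollary. For the fundamental group, I would identify loops at $\id_{BG}$ with natural automorphisms of the identity functor on the delooping groupoid of $G$: such a natural automorphism is a choice of element $z \in G$ at the unique object satisfying $zg = gz$ for all $g \in G$, giving $\pi_1(\Eq(BG), \id) \cong Z(G)$.

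Combining these two identifications, the short exact sequence
\[
1 \longrightarrow Z(G) \longrightarrow G \xrightarrow{\ \theta\ } \Aut(G) \longrightarrow \Out(G) \longrightarrow 1
\]
shows that $\Eq(BG)$ is contractible if and only if $\theta: G \to \Aut(G)$ is an isomorphism, i.e.\ $G$ is complete. Hence the assignment $G \mapsto (BG \to *)$ restricts to a bijection between complete groups and equivalence classes of connected $1$-truncated univalent morphisms over the point, with inverse $E \mapsto \pi_1(E, e)$. The main step I expect to require care is the identification $\pi_1(\Eq(BG), \id) \cong Z(G)$: naively one could forget that we are computing loops based at the identity (rather than components), so I would verify it by an explicit model, for instance by writing $\Eq(BG)$ as the maximal subgroupoid of the functor groupoid $\Fun(BG, BG)$ and computing its automorphism group at the identity functor directly.
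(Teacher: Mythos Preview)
Your proposal is correct and follows essentially the same route as the paper: the corollary is stated as a summary of the discussion immediately preceding it, which (like you) reduces univalence of $BG \to *$ to contractibility of $\Eq(BG)$ via \cref{ex:univalent over point}, identifies the components with $\Out(G)$, and then invokes triviality of the center to handle the loops. If anything, you are slightly more explicit than the paper in spelling out that $\Eq(BG)$ is a $1$-type and in computing $\pi_1(\Eq(BG),\id)\cong Z(G)$ via natural automorphisms of the identity functor, where the paper only gestures at this by saying one needs ``no morphism conjugate to itself in a non-trivial way''.
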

   
   The study of complete groups is an ongoing project. We know that $S_n$ is complete for $n \neq 2,6$  \cite[Section 7]{rotman1995groups}. Moreover, there is a ``{\it completion procedure}" that constructs a complete group out of the chain 
   $$G \to \Aut(G) \to \Aut(\Aut(G)) \to ... $$ 
   which is described in more detail in \cite{wielandt1939complete,hamkins1998complete}. One interesting question would be to better understand the relation between the completion procedure for groups and the univalent completion discussed in \cref{sec:univ segal obj}, however, we will not address this question here.
  
  Interestingly enough univalence does not seem to have similar valuable implication in the category of groups. In fact we can characterize all univalent morphisms in in the category of groups, $\Grp$, directly using \cref{def:univalence}.
  
 \begin{lemone} \label{lemma:univ groups}
 	Let $\Grp$ be the $1$-category of groups. Then the poset $\Univ_{\Grp}$ is isomorphic to the following poset
 	\begin{center}
 		$\Bigg\{$
 		\begin{tabular}{ccccccc}
 			$1$ & & $1$ & & $\mathbb{Z}/2$ & & $\mathbb{Z}/2$ \\
 			$\downarrow$ & $\leq$ & $\downarrow $ & , & $\downarrow$ & $\leq$ & $\downarrow 0$ \\
 			$1$ & & $\mathbb{Z}/2$ & & $1$ & & $\mathbb{Z}/2$
 		\end{tabular}
 	$\Bigg\}$
 	\end{center}.
 \end{lemone}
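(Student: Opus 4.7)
My approach is to classify all univalent morphisms in $\Grp$ by combining three tools from the preceding sections: \cref{ex:univalent over point} (a map $G \to 1$ is univalent iff the automorphism group of $G$ is trivial), \cref{prop:univ if mono} (a pullback of a univalent map along a monomorphism is univalent), and \cref{prop:neg one trunc univ} (an isomorphism $E \xrightarrow{\sim} B$ is univalent iff $B$ is $(-1)$-truncated). In $\Grp$, monomorphisms are exactly injective homomorphisms, and the only $(-1)$-truncated object is the trivial group $1$, since for every nontrivial $B$ the set $\Hom(B,B)$ contains both $\id_B$ and the zero homomorphism.

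I would first pin down the univalent maps with target $1$. By \cref{ex:univalent over point} these are the $G \to 1$ with $\Aut(G) = 1$, and it is a classical fact of group theory that $\{1, \mathbb{Z}/2\}$ is precisely the class of groups with trivial automorphism group: any element of order $\geq 3$ admits the inversion automorphism, and any elementary abelian $2$-group of rank $\geq 2$ carries a nontrivial $\mathrm{GL}(\mathbb{F}_2)$-action. Next, for a general univalent $p : E \to B$, applying \cref{prop:univ if mono} to the always-mono $1 \hookrightarrow B$ shows that $\ker(p) \to 1$ is univalent, forcing $\ker(p) \in \{1, \mathbb{Z}/2\}$, and applying it to the image inclusion $\mathrm{Im}(p) \hookrightarrow B$ shows that the surjective corestriction $E \twoheadrightarrow \mathrm{Im}(p)$ is univalent. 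A short case analysis on $\ker(p)$ rules out all surjective univalent maps other than $\id_1$ and $\mathbb{Z}/2 \to 1$: if $\ker(p) = 1$ then $p$ is an isomorphism and \cref{prop:neg one trunc univ} forces $B = 1$, while if $\ker(p) = \mathbb{Z}/2$ and $p$ surjects onto a nontrivial target, the resulting extension of the target by $\mathbb{Z}/2$ carries a nontrivial automorphism over the base (for example, inversion on $\mathbb{Z}/4$ or a shear on $(\mathbb{Z}/2)^2$), giving a nontrivial element of $\Aut_B(E)$ and contradicting univalence.

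Combining these constraints, the source $E$ and target $B$ of any univalent morphism must lie in $\{1, \mathbb{Z}/2\}$. It then remains to verify directly that the four claimed morphisms $1 \to 1$, $1 \to \mathbb{Z}/2$, $\mathbb{Z}/2 \to 1$, and $\mathbb{Z}/2 \xrightarrow{0} \mathbb{Z}/2$ are univalent, and to exclude the remaining candidate $\id_{\mathbb{Z}/2}$, which fails by \cref{prop:neg one trunc univ}. The poset order $q \leq p$, namely that $q$ is a pullback of $p$ along a mono, then follows by explicit pullback computations along the only nontrivial mono $1 \hookrightarrow \mathbb{Z}/2$: we obtain exactly $(1 \to 1) \leq (1 \to \mathbb{Z}/2)$ and $(\mathbb{Z}/2 \to 1) \leq (\mathbb{Z}/2 \xrightarrow{0} \mathbb{Z}/2)$, with no comparisons between the two chains. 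The main obstacle is the exhaustive elimination of surjective univalent maps with kernel $\mathbb{Z}/2$ onto a nontrivial target: each candidate extension must be exhibited to carry a nontrivial base-preserving automorphism, which is an elementary but careful case analysis that must be combined with the image-factorization argument above to cover non-surjective maps uniformly.
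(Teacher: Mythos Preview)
Your overall framework (constrain the kernel via the fibre over $1$, then constrain the image by passing to the surjective corestriction) is natural, but the key step --- ruling out surjective univalent maps with kernel $\mathbb{Z}/2$ onto a nontrivial target --- does not go through as stated. You assert that any such extension $1 \to \mathbb{Z}/2 \to E \to B \to 1$ carries a nontrivial automorphism over $B$, but this is false already for $B = A_5$ with the split extension $E = \mathbb{Z}/2 \times A_5$: since $\Hom(A_5,\mathbb{Z}/2)=0$ and $\Aut(\mathbb{Z}/2)=1$, one checks directly that $\Aut_{A_5}(\mathbb{Z}/2 \times A_5)$ is trivial. (The projection $\mathbb{Z}/2 \times A_5 \to A_5$ is still not univalent --- every pullback is $\mathbb{Z}/2 \times D$ regardless of the base map $D\to A_5$ --- but the obstruction is \emph{not} a nontrivial fibrewise automorphism.) So ``nontrivial $\Aut_B(E)$'' is not an exhaustive criterion, and the promised ``elementary but careful case analysis'' ranging over all groups $B$ cannot be carried out on that basis.

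There is a second, smaller gap: even granting your classification of surjective univalent maps, you would only obtain $\mathrm{Im}(p)=1$, i.e.\ $p$ is the zero map with $E\in\{1,\mathbb{Z}/2\}$; this does not by itself force $B\in\{1,\mathbb{Z}/2\}$, and indeed $0:\mathbb{Z}/2\to B$ fails to be univalent for most $B$ for reasons you have not invoked. The paper sidesteps both problems by first pulling back along $\langle g\rangle \hookrightarrow B$ for each $g\in B$, so that one only has to analyse extensions of \emph{cyclic} groups by $\mathbb{Z}/2$. Over a cyclic codomain the extension splits, and then, rather than searching for a fibrewise automorphism, the paper compares the two pullback squares sitting over $\id_{\langle g\rangle}$ and the inversion $\iota$: univalence forces $\iota=\id$, i.e.\ $g^2=1$. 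This pins the codomain down to an elementary abelian $2$-group, after which a further pullback to the three copies of $\mathbb{Z}/2$ inside $(\mathbb{Z}/2)^2$ finishes the argument.
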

 \begin{proof} 
 	 The proof breaks down into several steps:
 	 
 	 {\it (1) Univalent morphism over trivial groups:}
 	 By \cref{ex:univalent over point} the univalent groups over the trivial group are the ones without non-trivial automorphisms. In this step we show that the only groups with trivial automorphism groups are the trivial group $1$ and $\mathbb{Z}/2$. Indeed, if the group is not abelian then conjugation by an element must be a non-trivial automorphisms. However, if the group is abelian then the inverse map is an automorphism, which is the identity if and only if every element has order $2$. Moreover, if a group with all elements of order $2$ has more than one non-trivial element then flipping them gives us another automorphism. Hence, we are left with the trivial group and $\mathbb{Z}/2$.
 	
 	{\it (2) Univalent isomorphisms are trivial:}
 	By \cref{prop:neg one trunc univ}, an isomorphism is univalent if and only if the domain is a subobject of the trivial group $1$, which means it must be the trivial group. 
 	
 	{\it (3) Domains of univalent morphisms are $2$-groups:}
 	Next, we claim that if $\varphi: H \to G$ is univalent then every element in $G$ has order $1$ or $2$. Fix an element $g \in G$. Restriction along the subgroup generated by $g$, $<g>$ gives us a univalent morphism (\cref{prop:univ if mono}), which we also denote by $\varphi: H \to <g>$. By assumption the map is surjective. Moreover by Step (1) the kernel has to be trivial or $\mathbb{Z}/2$ as the map from the kernel to $1$ will be univalent. If the kernel is trivial the map is an isomorphism, which, by the Step (2), is univalent if and only if $<g>=1$, meaning $g$ is the trivial element. 
 	
 	Hence, let us assume the kernel is $\mathbb{Z}/2$. This means we have following short exact sequence of groups 
 	$$
 	1 \to \mathbb{Z}/2 \to H \to <g> \to 1
 	$$
 	where $<g>$ is cyclic of order prime or infinite. This extension only has trivial solutions. Indeed, $\mathbb{Z}/2$ is a normal subgroup of $H$ by assumption and index $2$ subgroups are always normal. Hence $H \cong \mathbb{Z}/2 \times <g>$. We now have following two pullback squares of groups (where $\iota$ denote the inversion isomorphism)
 	\begin{center}
 		\begin{tikzcd}[row sep=0.4in]
 			\mathbb{Z}/2 \times <g> \arrow[r, "\id"] \arrow[d, "\pi_2"] & \mathbb{Z}/2 \times <g> \arrow[d, "\pi_2"]\\
 			<g> \arrow[r, "\id"] & <g>
 		\end{tikzcd}
 	 \ , \
 	 	\begin{tikzcd}[row sep=0.4in]
 	 		\mathbb{Z}/2 \times <g> \arrow[r, "\id \times \iota"] \arrow[d, "\pi_2"] & \mathbb{Z}/2 \times <g> \arrow[d, "\pi_2"] \\
 	 		<g> \arrow[r, "\iota"] & <g>
 	 	\end{tikzcd}.
 	\end{center}
    The fact that the pullbacks are equal, and univalence, implies that $\id$ and $\iota$ must be equal, meaning $ g= g^{-1}$, proving that $g$ must have order $2$.
 	
 	{\it (4) Univalent morphisms over $\mathbb{Z}/2$:}
    Let us characterize all univalent morphisms over $\mathbb{Z}/2$. By Step (1), we know that the kernel has to be trivial or $\mathbb{Z}/2$. If the kernel is trivial then the map is either the identity $\mathbb{Z}/2 \to \mathbb{Z}/2$, which is not univalent as $\mathbb{Z}/2$ is not a subobject of the trivial group (by Step (2)), or $1 \to \mathbb{Z}/2$, which is in fact univalent. Indeed, a group homomorphism $\varphi: H \to G$ is a pullback of $1 \to \mathbb{Z}/2$ if and only if it is either an isomorphism, in which case it is the pullback of the constant map $ G\to \mathbb{Z}/2$, or $H$ is an index $2$ subgroup, in which case there is a unique morphism $G \to \mathbb{Z}/2$ that makes $\varphi$ a pullback of $1 \to \mathbb{Z}/2$.
    
    If the kernel is not trivial, then by Step (1) it is just $\mathbb{Z}/2$. Let us first assume it is also surjective. We now need to solve the extension problem
 	$$
 	1 \to \mathbb{Z}/2 \to H \to \mathbb{Z}/2 \to 1.
 	$$
 	The group has order $4$ and so must be either $\mathbb{Z}/4$ or $\mathbb{Z}/2 \times \mathbb{Z}/2$. The map $\mathbb{Z}/4 \to \mathbb{Z}/2$ has a non-trivial automorphism (flipping $1$ and $3$, but not changing $0$ and $2$) and so cannot be univalent. This leaves us with the projection $\pi_1: \mathbb{Z}/2 \times \mathbb{Z}/2 \to \mathbb{Z}/2$, which is also not univalent. Indeed, the pullbacks of $\pi_1$ along $0,\id: \mathbb{Z}/2 \to \mathbb{Z}/2$ are the same although the maps are not equal.
 	
 	On the other hand if the kernel is not trivial, but the map is not surjective, then we have the $0$ map $\mathbb{Z}/2 \to \mathbb{Z}/2$, which is univalent as well. Indeed a morphism $\varphi: H \to G$ is a pullback of $0$ if and only if there is an isomorphism $H \cong \mathrm{Im}(\varphi) \times \mathbb{Z}/2$, which can only hold in a unique way.
 	
 	{\it (5) Domain of univalent morphism must have at most two elements:}
 	Finally, assume that $H \to G$ is univalent with $G$ more than $2$ elements. Then there must be an injective map $\mathbb{Z}/2 \times \mathbb{Z}/2 \to G$ and so the restriction $H \to \mathbb{Z}/2 \times \mathbb{Z}/2$ is univalent as well (again by (\cref{prop:univ if mono}). We know have three injective maps $\mathbb{Z}/2 \to \mathbb{Z}/2 \times \mathbb{Z}/2$ and so the restriction to all three maps must be univalent. However, there are only two univalent morphisms over $\mathbb{Z}/2$. Hence two of the fibers are isomorphic, meaning the corresponding maps must be isomorphic as well. This proves that there cannot be two copies of $\mathbb{Z}/2$ in $G$.
 	
 	{\it (6) Combining the steps:}
 	We have proven in Step (5) that every univalent morphism must have either trivial domain or domain equal $\mathbb{Z}/2$. By Step (1) the ones with trivial domain must be $1 \to 1$ and $\mathbb{Z}/2 \to 1$. By Step (4), the univalent morphisms over $\mathbb{Z}/2$ must be of the form, $0: \mathbb{Z}/2 \to \mathbb{Z}/2$ and $1 \to \mathbb{Z}/2$. This finishes the proof. 
 \end{proof}
  
  Notice, \cref{lemma:univ groups} gives a justification for the assumptions in  \cref{lemma:univ leq p} and \cref{lemma:univ weak meets}. Indeed, the poset $\Univ_{\Grp}$ evidently does not have joins.
 
  We can summarize the work we have done in this section as follows: 
 \begin{itemize}
 	\item Trying to characterize $(-1)$-truncated univalent morphisms in the $\infty$-categorical setting encourages us to introduce subobject classifiers, moving us in the direction of elementary toposes.
 	\item Trying to characterize other univalent morphisms in the elementary topos setting forces us to consider a larger $\infty$-category.
 \end{itemize}
 Hence, the solution is to study $\infty$-categorical elementary topos theory!
 The presentable case was covered using Grothendieck $\infty$-toposes, however the non-presentable case needs further analysis and only first steps have been taken \cite{rasekh2018elementarytopos,stenzel2020comprehension}. 
 As we cannot give a definitive answer yet, we will leave the reader with two examples that point in two different directions.
 
 \begin{exone}
 	The category of finite sets is an elementary topos, which implies that the category of finite $G$-sets is also an elementary topos. We saw in \cref{ex:sthree univ} that the finite set $\{1,2,3\}$ with the action by $S_3$ is univalent over the final object. So we would hope there exists an $\infty$-category $\E$ such that $\tau_0\E$ is equivalent to finite $G$-sets. 
 	
 	However, this seems very unlikely. Indeed, any locally Cartesian closed $\infty$-category with subobject classifiers that satisfies some additional very mild assumptions (finite colimits and descent \cite[Section 6.1.3]{lurie2009htt}) will have a natural number object \cite{rasekh2021nno}, which does not exists in the category of finite $G$-sets.
 \end{exone}

  We might hope that we can at some point exploit the fact that finite sets are the full subcategory of the category of sets. Let us end on a positive note.
  
 \begin{exone}
 	Let $\E$ be an elementary $1$-topos. Then $\Hom_\E(1,\Omega)= \Sub(1)$ has the structure of a poset given by inclusion of subobjects and we can choose a filter \cite{changkeisler1990modeltheory,los1955ultraproduct} and construct a new elementary topos, called the {\it filter quotient} and denote $\E_\Phi$, which inherits some properties of the original topos, however, can also differ in important regards  \cite{adelmanjohnstone1982serreclasses}. Concretely, $\E_\Phi$ need not be presentable or cocomplete, even if $\E$ is \cite[Example A2.1.13]{johnstone2002elephanti}. Hence we cannot use \cref{prop:univ n topos} to characterize univalent morphisms. 
 	
 	Fortunately, in this case we can in fact lift $\E_\Phi$ to an $\infty$-category $\hat{\E}_\Phi$ \cite{rasekh2020filterquotient}. Let us focus on one example: the category of $I$-indexed sets $\E = \set^I$ where $I$ is a small set. In that case a filter on $\set^I$ is a filter of subsets of $I$ and the filter construction is called the {\it filter product} $\prod_\Phi \set$. Moreover, the $\infty$-categorical lift is given by the filter product on the $\infty$-category of $I$-indexed spaces $\s^I$, which is also denoted $\prod_\Phi \s$. Moreover, $\prod_\Phi \s$ has object classifiers given by the ones in $\s^I$ (which are just level-wise given by the ones in spaces analogous to \ref{eq:univ ff}).
 	
 	Although the filter products are not presentable $\infty$-categories they still have truncation functors \cite[Section 6]{rasekh2018truncations}.
 	Hence, applying \cref{prop:univ n topos}, we can hence deduce that a morphism in the $1$-category $\prod_\Phi \set$ is univalent if and only if it is a mono pullback of object classifiers in $\prod_\Phi \s$. 
 \end{exone}
 
 Finally note that although the study of univalence moves us towards topos theory, there are in fact presentable $\infty$-categories that are not toposes but where we do want to study and classify all univalent morphisms. One example is the $\infty$-category of motivic spaces, where some univalent morphisms have been characterized in \cite[Section 6.8]{gepnerkock2017univalence}.

\section{Further Examples} \label{sec:examples}
In this last section we want to look at univalence beyond the locally Cartesian closed settings: the $\infty$-category of $\infty$-categories and pointed $\infty$-categories. 

{\bf The $\infty$-Category of $\infty$-Categories:}
We want to study univalence in the very large $\infty$-category of large $\infty$-categories, $\widehat{\cat}_\infty$ (using the size conventions established in \fibref{item:universes}). Recall, by \fibref{item:inftycategories}, $\widehat{\cat}_\infty$  is the underlying quasi-category of $\infty$-categories, which are the objects of our chosen $\infty$-cosmos $\K$ (\fibref{item:cosmos}).

First observe that $\widehat{\cat}_\infty$ has all finite limits and so we can study the $\infty$-category $\Oall_{\widehat{\cat}_\infty} \to \widehat{\cat_\infty}$ and try to characterize the $(-1)$-truncated morphisms in that category. In particular we can immediately make following observation. 

\begin{exone}
	Let $\C$ be an $\infty$-category, then, following \cref{ex:univalent over point}, $\C$ over the final object is univalent if and only if the $\infty$-category of self-equivalences is trivial. This in particular includes categories such as $[0]$ or $[1]$.
\end{exone}

On the other hand, there is currently no clear way to characterize all univalent morphisms with non-trivial codomain in $\widehat{\cat}_\infty$. Rather, we will focus on an important class of univalent morphisms, namely the {\it univalent (co)Cartesian fibrations}. For that we need to review the {\it universal Cartesian fibration}.

Recall, by \fibref{item:grothendieck}, that for a given large $\infty$-category $\C$ there is a natural equivalence between Cartesian fibrations and (very large) $\infty$-category valued functors 
\begin{equation} \label{eq:cart}
(\Cart_{/\C^{op}})^\simeq \simeq \Map(\C,\widehat{\cat}_\infty).
\end{equation}
Fix a cardinal $\kappa$. Then the equivalence \ref{eq:cart} restricts to an equivalence of large $\infty$-categories
\begin{equation} \label{eq:smcart}
	(\Cart^{\kappa}_{/\C^{op}})^\simeq \simeq \Map(\C,\cat^\kappa_\infty),
\end{equation}
where $\cat^\kappa_\infty$ denotes the large $\infty$-category of $\kappa$-small $\infty$-categories and $\Cart^{\kappa}$ denotes {\it Cartesian fibrations with $\kappa$-small fibers}. By the Yoneda lemma (\fibref{item:rep right fib}) this equivalence must be given by pulling back along a  Cartesian fibration over $(\cat^\kappa_\infty)^{op}$ that under the equivalence \ref{eq:smcart} corresponds to the identity $\cat^\kappa_\infty \to \cat^\kappa_\infty$. Given the universal property, we call this Cartesian fibration, the {\it universal $\kappa$-small Cartesian fibration} and denote it $p_{\Cart^\kappa}$. This Cartesian fibration can in fact be characterized in more detail, as has been done in \cite[Subsection 3.3.2]{lurie2009htt}, \cite[Example 4.9]{stenzel2020comprehension} or \cite[Subsection 4.2]{rasekh2017cartesian}. Here we want to focus on how we can use this universal Cartesian fibration to characterize univalent Cartesian fibrations.  
 
First of all, denote by $\sO^{\Cart}_{\widehat{\cat}_\infty} \hookrightarrow \Oall_{\widehat{\cat}_\infty}$ the full sub-right fibration of Cartesian fibrations in $\widehat{\cat}_\infty$ and note it corresponds (via \fibref{item:grothendieck}) to the functor $(\Cart_{/-})^\simeq: (\widehat{\cat}_\infty)^{op} \to \widehat{\s}$. As always, this cannot possibly be representable because of size issues, hence, let $\sO^{\Cart^\kappa}_{\widehat{\cat}_\infty}$ be a further restriction to $\kappa$-small Cartesian fibrations. The equivalence  \ref{eq:smcart} precisely states that the right fibration $\sO^{\Cart^\kappa}_{\widehat{\cat}_\infty}$ is representable (via $\cat^\kappa_\infty$). This means we have inclusions
\begin{equation} \label{eq:pcart univalent}
	\Map_{\widehat{\cat}_\infty}(-,\cat_\infty^\kappa) \simeq \sO^{\Cart^\kappa}_{\widehat{\cat}_\infty} \hookrightarrow \sO^{\Cart}_{\widehat{\cat}_\infty} \hookrightarrow \Oall_{\widehat{\cat}_\infty},
\end{equation}
which, by \cref{prop:univ final}, proves that the universal $\kappa$-small Cartesian fibration is univalent. 

\begin{remone}
	This result has also been proven via the original approach \cite{kapulkinlumsdaine2012kanunivalent} by Cisinski and Nguyen in the strict setting using the Cartesian model structure on marked simplicial sets \cite{cisinskynguyen2020hottest}.
\end{remone} 

Using this observation we can give a general characterization of univalent Cartesian fibrations, analogous to \cref{prop:univ n topos}.

\begin{propone} \label{prop:univalent coCart}
	A $\kappa$-small Cartesian fibration $p: \D \to \C$ is univalent if and only if the corresponding map $\C^{op} \to \cat^\kappa_\infty$ (given via \ref{eq:smcart}) is a mono map of $\infty$-categories.
\end{propone}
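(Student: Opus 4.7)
The strategy is to combine the univalence of the universal $\kappa$-small Cartesian fibration with the pullback-stability result of \cref{prop:univ if mono}, in direct analogy with the proof of \cref{prop:univ n topos} where the role of $\U^\kappa$ is now played by $p_{\Cart^\kappa}$. The first ingredient is already established: the chain of inclusions in \ref{eq:pcart univalent} together with \cref{prop:univ final} exhibits $p_{\Cart^\kappa}$ as a univalent morphism in $\widehat{\cat_\infty}$.

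Given a $\kappa$-small Cartesian fibration $p: \D \to \C$, the equivalence \ref{eq:smcart} produces a (homotopically unique) classifying functor $\varphi_p: \C^{op} \to \cat^\kappa_\infty$, or equivalently its opposite $\varphi_p^{op}: \C \to (\cat^\kappa_\infty)^{op}$, together with a pullback square realizing $p$ as the base change of $p_{\Cart^\kappa}$ along $\varphi_p^{op}$. Applying \cref{prop:univ if mono} to this square shows that $p$ is univalent if and only if $\varphi_p^{op}$ is $(-1)$-truncated in $\widehat{\cat_\infty}$. Since mono morphisms in $\widehat{\cat_\infty}$ are precisely the fully faithful functors (the diagonal $\C \to \C \times_{(\cat^\kappa_\infty)^{op}} \C$ being an equivalence is exactly full faithfulness), and since a functor is fully faithful if and only if its opposite is, this translates into $\varphi_p$ itself being mono, which is the desired statement.

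The only real point requiring care, rather than a genuine obstacle, is keeping the duality conventions of \ref{eq:smcart} straight so that the pullback square to which \cref{prop:univ if mono} is applied is set up consistently (one classifies Cartesian fibrations over $\C$ by presheaves of $\infty$-categories on $\C^{op}$, so the universal family $p_{\Cart^\kappa}$ sits over $(\cat^\kappa_\infty)^{op}$, and it is $\varphi_p^{op}$, not $\varphi_p$, that appears as the bottom edge of the pullback square). Once this bookkeeping is in place, the argument is just a two-line application of \cref{prop:univ final} and \cref{prop:univ if mono}.
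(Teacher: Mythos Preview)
Your overall strategy is exactly the paper's: realize $p$ as a pullback of the universal $\kappa$-small Cartesian fibration $p_{\Cart^\kappa}$ (which is univalent by \ref{eq:pcart univalent} and \cref{prop:univ final}) and then invoke \cref{prop:univ if mono}. That part is fine, and the bookkeeping with $(-)^{op}$ is handled correctly since $(-)^{op}$ is an autoequivalence of $\widehat{\cat_\infty}$ and hence preserves and reflects $(-1)$-truncated morphisms.

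However, there is a genuine error in the middle of your argument: you assert that ``mono morphisms in $\widehat{\cat_\infty}$ are precisely the fully faithful functors'' and justify it by saying the diagonal $\C \to \C \times_{(\cat^\kappa_\infty)^{op}} \C$ being an equivalence ``is exactly full faithfulness''. This is false. The paper addresses precisely this point in \cref{rem:ff vs mono}: a mono functor is one that induces an inclusion of subspaces on objects and, for each pair of objects, an inclusion of subspaces on mapping spaces; the inclusion of the maximal sub-$\infty$-groupoid $\C^\simeq \hookrightarrow \C$ is always mono but almost never fully faithful. In fact your own argument does not need the identification with full faithfulness at all: \cref{prop:univ if mono} gives you directly that $p$ is univalent if and only if $\varphi_p^{op}$ is mono, and that is already the desired conclusion (up to passing to the opposite). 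The proof is correct once you delete the clause about fully faithful functors and its parenthetical justification; as written it contains exactly the misconception the paper goes out of its way to flag.
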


\begin{proof}
	This follows directly from the fact that $p_{\Cart}$ is univalent combined with \cref{prop:univ if mono}.
\end{proof}

\begin{remone} \label{rem:ff vs mono}
	It is important to remember that mono maps of $\infty$-categories are {\bf not} the fully faithful functors. Rather they are the functors that are characterized by a subspace of objects and for any two objects, a subspace of the mapping space. For example, the inclusion from the maximal sub-$\infty$-groupoid is always mono, but very rarely fully faithful.
\end{remone}

In fact we can also externalize this condition and generalize \ref{eq:univ ff}. Let $p: \D \to \C$ be an arbitrary Cartesian fibration. Then it comes with a corresponding functor $\C^{op} \to \widehat{\cat}_\infty$ (given via \ref{eq:cart}). Choosing a large enough $\kappa$, this functor will factor  $\C^{op} \to \cat^\kappa_\infty \hookrightarrow \widehat{\cat}_\infty$. Now, by \cref{prop:univalent coCart}, $p$ is univalent if and only if the map $\C^{op} \to \cat^\kappa_\infty$ is mono, which is equivalent to the composition $\C^{op} \to \widehat{\cat}_\infty$ being mono as $\cat^\kappa_\infty$ is a full sub-$\infty$-category of $\widehat{\cat}_\infty$. Hence, we get following characterization of univalence relying on the very large $\infty$-category $\widehat{\cat}_\infty$.

\begin{corone} \label{cor:univ cart}
 A Cartesian fibration $p:\D \to \C$ is univalent if and only if the corresponding functor $\C^{op} \to \widehat{\cat}_\infty$ (given via \ref{eq:cart}) is mono. 
\end{corone}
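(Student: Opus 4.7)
The plan is to reduce to \cref{prop:univalent coCart} by choosing a large enough cardinal bound and then using that the inclusion $\cat_\infty^\kappa \hookrightarrow \widehat{\cat}_\infty$ is itself a monomorphism, so monos are both preserved and reflected through composition with it. Concretely, given a Cartesian fibration $p : \D \to \C$, first I would pick a (large) cardinal $\kappa$ such that all fibers of $p$ are $\kappa$-small, so that the functor $\C^{op} \to \widehat{\cat}_\infty$ classifying $p$ via the equivalence (\ref{eq:cart}) factors through $\cat_\infty^\kappa \hookrightarrow \widehat{\cat}_\infty$, yielding a map $\varphi : \C^{op} \to \cat_\infty^\kappa$. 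By \cref{prop:univalent coCart}, $p$ is univalent if and only if $\varphi$ is mono. It therefore suffices to show that $\varphi$ is mono if and only if the composition $i \circ \varphi : \C^{op} \to \widehat{\cat}_\infty$ is mono, where $i : \cat_\infty^\kappa \hookrightarrow \widehat{\cat}_\infty$ is the full sub-$\infty$-category inclusion.

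The key observation is that $i$ is a monomorphism in $\widehat{\cat}_\infty$: a full sub-$\infty$-category inclusion is determined by a subspace of objects and the full mapping spaces, which is exactly the description of mono maps of $\infty$-categories recalled in \cref{rem:ff vs mono}. In particular the diagonal $\cat_\infty^\kappa \to \cat_\infty^\kappa \times_{\widehat{\cat}_\infty} \cat_\infty^\kappa$ is an equivalence. Given this, monos are closed under composition (so $\varphi$ mono forces $i \circ \varphi$ mono), and conversely, if $i \circ \varphi$ is mono, then in the pullback identity
\[
\C^{op} \times_{\cat_\infty^\kappa} \C^{op} \;\simeq\; \C^{op} \times_{\cat_\infty^\kappa} \bigl(\cat_\infty^\kappa \times_{\widehat{\cat}_\infty} \cat_\infty^\kappa\bigr) \times_{\cat_\infty^\kappa} \C^{op} \;\simeq\; \C^{op} \times_{\widehat{\cat}_\infty} \C^{op},
\]
the equivalence comes from $i$ being mono, so the diagonal $\C^{op} \to \C^{op} \times_{\cat_\infty^\kappa} \C^{op}$ agrees with the diagonal into $\C^{op} \times_{\widehat{\cat}_\infty} \C^{op}$, which is an equivalence by hypothesis. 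Hence $\varphi$ is mono, completing the equivalence of the two conditions.

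The only subtle point, which I expect to be the main thing to verify carefully rather than a real obstacle, is the justification that a full sub-$\infty$-category inclusion really is $(-1)$-truncated in $\widehat{\cat}_\infty$ (and not merely fully faithful). This is exactly the content of \cref{rem:ff vs mono}: mono maps correspond to inclusions of a subspace of objects together with all mapping spaces between chosen objects, which is the defining data of a full sub-$\infty$-category inclusion. Once this is granted, the cancellation argument above goes through formally.
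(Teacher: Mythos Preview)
Your proposal is correct and follows essentially the same route as the paper: choose a large enough $\kappa$ so that the classifying functor factors through $\cat_\infty^\kappa$, invoke \cref{prop:univalent coCart}, and then use that the full sub-$\infty$-category inclusion $\cat_\infty^\kappa \hookrightarrow \widehat{\cat}_\infty$ is mono so that the mono condition on $\varphi$ is equivalent to the mono condition on $i\circ\varphi$. Your write-up is in fact more detailed than the paper's, which simply asserts the last equivalence in one clause.
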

 
 In other words for a given large $\infty$-category $\C$, there is an equivalence of (very large) posets
 \begin{equation} \label{eq:univ mono cat}
 	(\Univ^{\Cart})_{/\C} \simeq \Fun^{mono}(\C^{op},\widehat{\cat}_\infty),
 \end{equation}
 where $(\Univ^{\Cart})_{/\C}$ denotes the poset of univalent Cartesian fibrations over $\C$ and $\Fun^{mono}$ are mono functors. Notice the similarity to \ref{eq:univ ff} despite the fact that $\cat_\infty$ is not an $\infty$-topos or even locally Cartesian closed. 

It is worth pointing out that there are similar results to \cref{prop:univalent coCart} and \cref{cor:univ cart} for coCartesian fibrations.
Using these results (and the analogous ones for coCartesian fibrations) we can construct many interesting univalent morphisms in $\widehat{\cat}_\infty$. In particular, the {\it universal  right fibration} (studied extensively in \cite[Lemma 2.2.4]{kazhdanvarshvsky2014yoneda} \cite[Corollary 5.2.8]{cisinski2019highercategories}) is obtained via the inclusion of $\infty$-categories $\widehat{\s} \to \widehat{\cat}_\infty$ and so, by \cref{cor:univ cart}, must be univalent. Similarly, the universal $n$-truncated Cartesian fibration is obtained via the inclusion of the full sub-category of $n$-truncated $\infty$-categories $(\widehat{\cat}_\infty)_{\leq n} \to \widehat{\cat}_\infty$ and so must be univalent as well.

Until now we have used the external approach (via the target fibration) to study univalence in $\cat_\infty$. We want to understand how it relates to the internal version, using \cref{the:main theorem}. The first problem we encounter is that $\cat_\infty$ is not locally Cartesian closed. For example pulling back along the inclusion of posets $\{0 \leq 2\} \hookrightarrow \{0 \leq 1 \leq 2\}$ does not preserve pushouts and so cannot have a right adjoint (implying that even the sub-category $\cat$ is not locally Cartesian closed). However, we can still use the internal characterization of univalence for all functors for which pulling back admits a right adjoint. Such functors are known as {\it exponentiable fibration} \cite{ayalafrancis2020fibrations} (also called {\it Condouch{\'e} fibrations} in the classical setting \cite{conduche1972fibrations}). Fortunately, exponentiable fibrations do include (co)Cartesian fibrations and so we can in fact give internal characterization of univalence. In particular, we can combine \cref{the:main theorem} and \ref{eq:pcart univalent} to get.

\begin{corone}
 Let $p_{\Cart}$ be the universal coCartesian fibration. Then the Segal object $\n(p_{\Cart})$ is complete.
\end{corone}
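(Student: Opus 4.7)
The plan is to verify the hypotheses of \cref{the:main theorem} for $p = p_{\Cart}$ and then invoke the equivalence (a)$\Leftrightarrow$(d) of part (3), which reads: $p$ is univalent if and only if $\n(p)$ is complete. So there are really two things to check: univalence of $p_{\Cart}$, and the existence of a right adjoint to pullback along $p_{\Cart} \times \id$.

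First I would recall that univalence of the universal $\kappa$-small Cartesian fibration has already been established in the inclusion chain \ref{eq:pcart univalent}: the straightening/unstraightening equivalence \ref{eq:smcart} identifies $\sO^{\Cart^\kappa}_{\widehat{\cat}_\infty}$ with the representable right fibration $\Map_{\widehat{\cat}_\infty}(-,\cat_\infty^\kappa)$, and the composite inclusion into $\Oall_{\widehat{\cat}_\infty}$ then witnesses $\cat_\infty^\kappa$ as a $(-1)$-truncated object of $\Oall_{\widehat{\cat}_\infty}$ via \cref{prop:univ final}. Hence $p_{\Cart}$ is univalent in $\widehat{\cat}_\infty$.

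Next I would verify the adjointness hypothesis. Writing $p_{\Cart}: \cE \to \cat_\infty^\kappa$ for the universal Cartesian fibration, the relevant morphism is $p_{\Cart} \times \id: \cE \times \cat_\infty^\kappa \to \cat_\infty^\kappa \times \cat_\infty^\kappa$. Since $p_{\Cart}$ is itself a Cartesian fibration, and products of exponentiable fibrations with identities remain exponentiable, this map is an exponentiable fibration in the sense of \cite{ayalafrancis2020fibrations}, as noted in the discussion just preceding the corollary. Therefore pulling back along $p_{\Cart} \times \id$ admits a right adjoint, so \cref{the:main theorem} applies.

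Finally, applying the equivalence (a)$\Leftrightarrow$(d) of \cref{the:main theorem}(3) to $p_{\Cart}$ yields that the Segal object $\n(p_{\Cart})$ is complete. The only mild subtlety — really the only point that requires thought — is the exponentiability of $p_{\Cart} \times \id$, since $\widehat{\cat}_\infty$ is not itself locally Cartesian closed; but this is precisely the reason the hypothesis in \cref{the:main theorem} is phrased in terms of a single morphism having the adjoint, rather than as a global condition on $\C$, and it is satisfied here for free because (co)Cartesian fibrations are exponentiable.
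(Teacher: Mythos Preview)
Your proposal is correct and follows exactly the route the paper takes: the corollary is stated there as an immediate combination of \cref{the:main theorem} with the univalence of $p_{\Cart}$ established in \ref{eq:pcart univalent}, together with the observation (made in the paragraph immediately preceding the corollary) that (co)Cartesian fibrations are exponentiable, so the adjoint hypothesis of \cref{the:main theorem} is met. Your write-up simply spells out these two ingredients in more detail than the paper does.
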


Let us unpack the result. $\n(p_{\Cart})$ is a Segal object in $\cat_\infty$, which gives us a double $\infty$-category as defined in \cite{haugseng2017highermorita}. This double $\infty$-category is characterized by objects being $\infty$-categories, horizontal and vertical morphisms being functors of $\infty$-categories, and squares being commutative squares of such functors. The completeness of the double $\infty$-category corresponds to the observation that vertical morphism and horizontal morphisms coincide and hence a vertical morphism is an equivalence if and only if the horizontal morphism is an equivalence. 

From another perspective the comparison between \ref{eq:univ ff} and \cref{prop:univalent coCart} seems unsatisfying. Unlike a space an $\infty$-category has ``directed data", meaning morphism of our category and those should be considered in our univalence condition. For example, let $\C$ be an $\infty$-category and denote the full sub-$\infty$-category of $\widehat{\cat}_\infty$ with object $\C$ by $B\End(\C)$ and its sub-$\infty$-groupoid by $B\Aut(\C)$. Then the functors
$$B\Aut(\C) \hookrightarrow B\End(\C) \hookrightarrow \widehat{\cat}_\infty$$
are all mono and hence, by \cref{cor:univ cart}, the pullback of $p_{\Cart}$ along these monos is univalent as well. However, this does not match with \ref{eq:univ ff}. Ideally we would like pulling back to $B\End(\C)$ to be univalent, but pulling back to $B\Aut(\C)$ to not be as only the first is actually fully faithful in $\cat_\infty$ (\cref{rem:ff vs mono}).

That would require us to have a notion of ``directed univalence", which we cannot define in an $\infty$-category. For that we would have to use the fact that $\cat_\infty$ is in fact an $(\infty,2)$-category and develop a $2$-categorical notion of univalence, which is expected to relate to $2$-topos theory \cite{weber2007twotopos,riehl2018jmm,riehl2019ct}. More importantly, it could help us develop a theory of $(\infty,2)$-toposes. Concretely a key concept in the development of $(\infty,1)$-topos theory is the notion of {\it descent} \cite{rezk2010toposes}, also known as the {\it Giraud axioms} \cite{lurie2009htt}, which should be thought of as an $\infty$-categorical sheaf condition. It is this descent condition that guarantees the existence of object classifiers, which we can use to characterize univalent morphisms as has been done in \cite[Corollary 3.11]{gepnerkock2017univalence} and \cref{prop:univ n topos}.

On the other hand it is not yet understood what the $(\infty,2)$-categorical analogue of the sheaf condition should be and developing a $(\infty,2)$-categorical notion of univalence could serve as a guide towards properly developing such a sheaf condition.

{\bf Pointed $\infty$-Categories:} As we saw in \ref{eq:infty topos} a Grothendieck $\infty$-topos is a sheaf of spaces and hence generalizes spaces by introducing ``general elements". However, we can generalize spaces in other ways, unrelated to topos theory. Key examples of such generalizations are pointed spaces, $E_\infty$-monoids and spectra. We want to make some steps towards analyzing univalent morphisms in these categories.

Let us review the relevant concepts first. An $\infty$-category is called {\it pointed} if it has an initial and final object and they are equivalent. Moreover, we say an $\infty$-category is stable if it has finite limits and colimits and they commute. There are various other $\infty$-categories that fall between pointed and stable $\infty$-categories and are studied in detail in \cite{lurie2017ha,gepnergrothnikolaus2015infiniteloopspacemachine}. In particular all these types of $\infty$-categories are pointed and so we will focus on this case here. 

Assuming $\C$ is an $\infty$-category with final object $*$, the under-category $\C_{*/}$, which we simply denote $\C_*$, is by definition a pointed $\infty$-category. In fact this construction gives us an adjunction between $\infty$-categories and pointed $\infty$-categories
\begin{center}
	\begin{tikzcd}
		\cat_\infty \arrow[r, shift left=1.8, "(-)_*", "\bot"'] \arrow[r, shift right=1.8, hookleftarrow] & \cat_\infty^{pt} 
	\end{tikzcd}
\end{center}
where the left adjoint is simply the inclusion of the full subcategory of pointed $\infty$-categories. Hence, we want to focus on determining the univalent morphisms in $\C_*$.

The first problem we encounter is that $\C_*$ is not Cartesian closed. Indeed, let us denote the initial object in $\C_*$ by $0_\C$ and the final object by $1_\C$ and assume we have an internal mapping object $\C_*(c,d)$. Then, for objects $c,d$ we have equivalences
$$\Map_{\C_*}(c,d) \simeq \Map_{\C_*}(1_\C \times c,d) \simeq \Map_{\C_*}(1_\C,\C_*(c,d)) \simeq \Map_{\C_*}(0_\C,\C_*(c,d)) \simeq *,$$
which implies that $\Map_{\C_*}(c,d)$ must be trivial.
Hence, we have to exclusively rely on the original definition of univalence given in \cref{def:univalence}.

Let us use this observation to classify some interesting univalent morphisms.
Fix a morphism $1_\C \to E \xrightarrow{ \ p \ } B$ in the $\infty$-category $\C_*$, such that $1_\C \to E$ is $(-1)$-truncated. Then for every morphism $1_\C \to Y \xrightarrow{ \ f \ } X$, we have following pullback diagram 
\begin{equation} \label{eq:pointed}
	\begin{tikzcd}
		\Map_{\Oall_{\C_*}}(1_\C \to Y \xrightarrow{ \ f \ } X, 1_\C \to E \xrightarrow{ \ p \ } B)\arrow[r] \arrow[d, hookrightarrow] & 
		\Map_{\C_*}(1_\C \to Y , 1_\C \to E) \arrow[r] \arrow[d] & * \arrow[d,"1_\C", hookrightarrow] \\
		\Map_{\Oall_{\C}}(Y \xrightarrow{ \ f \ } X,E \xrightarrow{ \ p \ } B) \arrow[r] & \Map_{\C}(Y ,E) \arrow[r] & \Map_\C(1_\C,E) 
	\end{tikzcd}.
\end{equation}
Here we are using the fact that the projection $\C_* \to \C$ reflects pullback squares \cite[Proposition 1.2.13.8]{lurie2009htt}.
The map $* \to \Map_\C(1_\C,E) $ is a $(-1)$-truncated map of spaces (\fibref{item:over cat trunc}). As the map on the right is $(-1)$-truncated, the left hand map is $(-1)$-truncated as well. Hence, $1_\C \to E \xrightarrow{ \ p \ } B$ is univalent in $\C_*$ only if $p:E \to B$ is univalent in $\C$, meaning the study of univalent maps can be reduced to the unpointed case. For example, for any space $X$, using \ref{eq:pointed} and \ref{eq:univ spaces} we can deduce that the map of pointed spaces $* \to E(\Eq(X)) \to B(\Eq(X))$ is univalent. Notice this example also helps us provide a counter-example for the other direction. The map $\emptyset \to \emptyset$ is univalent in $\s$ but cannot be recovered from a univalent morphism in $\s_*$ as there are no morphisms $ * \to \emptyset$. 

We can use this result to give a broader result about univalence in pointed categories coming from Grothendieck $1$-toposes.

\begin{corone}
	Let $\G$ be a Grothendieck $1$-topos. Then a morphism $1_\G \to E \to B$ in $\G_*$ is univalent only if the map $B \to \U$ classifying $p:E \to B$ is mono in the enveloping Grothendieck $\infty$-topos $\hat{\G}$.
\end{corone}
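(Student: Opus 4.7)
The corollary is a direct consequence of the pullback-diagram analysis in the paragraph immediately preceding the statement, combined with \cref{prop:univ n topos}. My plan is a two-step reduction: first reduce pointed univalence in $\G_*$ to unpointed univalence in $\G$, then invoke the classification of univalent morphisms in a Grothendieck $1$-topos.

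For the first step, I would use the pullback square comparing $\Map_{\Oall_{\G_*}}$ with $\Map_{\Oall_{\G}}$ displayed in the text. Because $1_\G \to E$ is $(-1)$-truncated in $\G$ (so by \fibref{item:over cat trunc} and the Yoneda lemma the map $* \to \Map_\G(1_\G, E)$ is $(-1)$-truncated), stability of $(-1)$-truncated maps under pullback yields that the middle and left vertical maps in that diagram are $(-1)$-truncated as well. Granted the author's deduction that this implies pointed univalence of $(1_\G \to E \to B)$ transfers to univalence of the underlying map $p \colon E \to B$ in $\G$, we have passed to the unpointed setting.

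For the second step, I would directly apply \cref{prop:univ n topos}. Since $\G$ is a Grothendieck $1$-topos with enveloping Grothendieck $\infty$-topos $\hat{\G}$, univalence of $p$ in $\G$ is equivalent to univalence of $p$ in $\hat{\G}$, which by the existence of an object classifier $\U^\kappa$ for sufficiently large inaccessible $\kappa$ (and \cref{prop:univ if mono}) is in turn equivalent to the classifying map $B \to \U^\kappa$ being $(-1)$-truncated in $\hat{\G}$. Chaining the implications gives the desired conclusion.

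The main substantive step is the first reduction, since the pullback square a priori only embeds pointed mapping spaces as $(-1)$-truncated subspaces of their unpointed counterparts; producing genuine emptiness or contractibility of the unpointed mapping space for every test arrow $Y \to X$ (not only those with a preferred pointing) is where the argument needs care. The rest is a formal invocation of \cref{prop:univ n topos}, which carries the bulk of the topos-theoretic input.
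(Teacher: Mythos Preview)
Your approach is essentially identical to the paper's two-step proof: reduce pointed univalence in $\G_*$ to unpointed univalence in $\G$ via the pullback square~\ref{eq:pointed}, then invoke \cref{prop:univ n topos}. The one ingredient the paper supplies that you leave implicit is the reason $1_\G \to E$ is $(-1)$-truncated: since $\G$ is a $1$-category, all mapping spaces are discrete, so any point $1_\G \to E$ is automatically mono. This is precisely what removes the extra hypothesis on the pointing and makes the diagram~\ref{eq:pointed} applicable to \emph{every} object of $\G_*$. Your final-paragraph concern about the direction of the implication (the diagram exhibits the pointed mapping space as a $(-1)$-truncated subspace of the unpointed one, which a priori gives the reverse implication) is apt; the paper's own proof is equally terse here and simply asserts the reduction without further comment.
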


\begin{proof}
	As $\G$ is a $1$-category, any morphism $1_\G \to E$ is $(-1)$-truncated. Hence, following \ref{eq:pointed}, $1_\C \to E \to B$ is univalent only if $E \to B$ is univalent in $\G$. By \cref{prop:univ n topos} this is equivalent to the classifying morphism being mono in the enveloping Grothendieck $\infty$-topos.
\end{proof}

\bibliographystyle{alpha}
\bibliography{main}

\end{document}